\newtheorem{definition}{Definition}[section]
\newtheorem{theorem}[definition]{Theorem}
\newtheorem{lemma}[definition]{Lemma}
\newtheorem{corollary}[definition]{Corollary}
\theoremstyle{remark}
\newtheorem{remark}[definition]{Remark}
\numberwithin{equation}{section}
\newcommand{\ddiv}{\operatorname{div}}
\newcommand{\ls}{\leqslant}
\newcommand{\rs}{\geqslant}
\newcommand{\non}{\nonumber}
\title{Gradient  potential estimates  for elliptic  double obstacle problems with Orlicz growth}
\author[a]{Qi Xiong}
\author[b]{Zhenqiu Zhang\thanks{Corresponding author.}}
\author[c]{Lingwei Ma}
\affil[a]{School of Mathematics, Southwest Jiaotong University, Chengdu, Sichuan, 610031, P.R. China}
\affil[b]{School of Mathematical Sciences and LPMC, Nankai University, Tianjin, 300071, P.R. China}
\affil[c]{School of Mathematical Sciences, Tianjin Normal University, Tianjin, 300387, P.R. China}
\date{\today}
\begin{document}
\maketitle
\footnotetext[1]{E-mail: xq@swjtu.edu.cn(Q. Xiong),  zqzhang@nankai.edu.cn (Z. Zhang),mlw1103@163.com (L. Ma).}

\maketitle
\begin{abstract}
In this paper,we consider the solutions of the elliptic double obstacle problems  with Orlicz growth involving  measure data.  Some pointwise estimates for the approximable solutions to these problems  are obtained in terms of  fractional maximal operators. Furthermore, we establish pointwise and  oscillation estimates for the gradients of solutions  via the non-linear Wolff potentials, which in turn  yield $C^{1,\alpha}$-regularity of solutions.\\

 Mathematics Subject classification (2010): 35B45; 35R05; 35B65.

Keywords: double obstacle problems; Wolff potential estimates; Restricted  fractional maximal functions.
\end{abstract}


\section{Introduction and main results}\label{section1}

We consider  the double obstacle problems with Orlicz growth , which are closely connected to measure data problems of the type
\begin{equation}\label{1.1}
  -\operatorname{div}\left({a}(x,Du)\right) =\mu \quad\quad\mbox{in}\ \ \ \Omega, \\[0.05cm]
\end{equation}
where $ \Omega\subseteq \mathbb{R}^n, n\geqslant2 $ is a bounded open set and $\mu \in \mathcal{M}_{b}(\Omega)$, where $\mathcal{M}_{b}(\Omega)$  is the set of signed Radon measures $\mu$ for which $|\mu|(\Omega)$ is finite and  here we  denote by $|\mu|$ the total variation of $\mu$. Moreover we
assume that $\mu(\mathbb{R}^n \backslash \Omega)=0$ and $a=a(x,\eta): \Omega \times\mathbb{R}^n \rightarrow \mathbb{R}^n$ is measurable for each $x\in \Omega$ and differentiable for almost every $\eta \in \mathbb{R}^n$ and there exist constants $0<l\leqslant 1 \leqslant L<+\infty$ such that for all $x \in \Omega,\eta,\lambda \in \mathbb{R}^n$,
\begin{eqnarray}\label{a(x)1}
  \left\{\begin{array}{r@{}c@{}ll}
&&D_{\eta} a(x,\eta )\lambda \cdot \lambda \geqslant l\dfrac{g(|\eta|)}{|\eta|}|\lambda|^2 \,, \\[0.05cm]
&&|a(x,\eta)|+|\eta||D_{\eta} a(x,\eta )|\leqslant Lg(|\eta|)\,, \\[0.05cm]
  \end{array}\right.
\end{eqnarray}
where $D_{\eta}$ denotes the differentiation in $\eta$ and $g(t) : [0,+\infty)\rightarrow [0,+\infty)$ satisfies
\begin{eqnarray}\label{a(x)3}
  \left\{\begin{array}{r@{}c@{}ll}
&&g(t)=0 \ \ \ \Leftrightarrow \ \ \   t=0 \,, \\[0.05cm]
&&g(\cdot)\in C^{1}(\mathbb{R}^+)\,, \\[0.05cm]
&& 1\leq i_{g}=: \inf_{t>0}\frac{tg'(t)}{g(t)}\leq \sup_{t>0}\frac{tg'(t)}{g(t)}=:s_{g}<\infty. \, \\[0.05cm]
  \end{array}\right.
\end{eqnarray}
We define
\begin{equation}\label{g}
G(t):= \int_0^tg(\tau)\operatorname{d}\!\tau \ \ \ \mbox{for}\ \ t\geq0.
\end{equation}
It is straightforward to see  that  $G(t)$ is convex and  strictly  increasing. The standard example for $G(\cdot)$ is $$G(t)=\int_{0}^{t}(\mu+s^2)^{\frac{p-2}{2}} s ds$$ with $\mu\geqslant0, p\geqslant2$, then \eqref{a(x)1} is reduced to $p$-growth condition(see \cite{dm11,km12}).

The double obstacle condition that we impose on the solution is  of the form $ \psi_1\leqslant u \leqslant  \psi_2$ a.e.  in $\Omega$, where $\psi_1, \psi_2 \in W^{1,G}(\Omega)\cap W^{2,1}(\Omega)$ ) are two given  functions,where $G$  and  $W^{1,G}(\Omega)$ are defined as \eqref{g} and Definition \ref{class} respectively. Let $(W^{1,G}(\Omega))'$ is the dual of $W^{1,G}(\Omega)$ and  $f \in L^{1}(\Omega)\cap (W^{1,G}(\Omega))'$. In the classical setting, it is natural to consider the double obstacle problem that can be formulated by the variational inequality
\begin{equation}\label{fjd}
\int_{\Omega} a(x,Du)\cdot D(v-u)dx \geq \int_{\Omega} f(v-u) dx
\end{equation}
for all functions $v \in u+W_0^{1,G}(\Omega)$ that  satisfy $ \psi_1\leqslant v \leqslant  \psi_2$ a.e. in $\Omega$. This type of obstacle problem was motivated by many applications arising from engineering, physics, mechanics  and other fields of applied sciences. However,  we are  interested in solutions to the double obstacle problems with measure data in the sense that we want to replace the inhomogeneity $f$ by a bounded Radon measure $\mu$.
 These solutions can be  obtained through approximation by using solutions to the variational inequalities \eqref{fjd}. The precise definition of approximable solutions is provided in Definition \ref{opdy}.

The construction of the nonlinear operator $a(\cdot,\cdot)$ we discussed above is based on the model case
\begin{equation*}
  -\operatorname{div}\left(\kappa(x)\frac{g(|Du|)}{|Du|}Du\right) =\mu \quad\quad\mbox{in}\ \ \ \Omega \\[0.05cm]
\end{equation*}
where $\kappa :\Omega\rightarrow[c,+\infty) $ is  bounded measurable and separated from zero function, and $g$ satisfies \eqref{a(x)3}.
This type of equation is arising in the fields of fluid dynamics, magnetism, and mechanics, see for instance \cite{ber1}. As a class of generalized $p$-Laplacian type elliptic equations, it was first proposed by Lieberman\cite{l1} and moreover, he proved $C^{\alpha}$- and $C^{1,\alpha}$-regularity of the solutions for these  elliptic equations in his paper.
Since then, substantial advancements have been made in the regularity theory of such equations. For a comprehensive overview of the topic we recommend referring to these articles \cite{bm20,cm16,cm17,cm15,l2,rt1,xiong3,xiong6,zz1}.

 In this work, our main goal is to  derive pointwise and oscillation estimates for the gradients of solutions to double obstacle problems by the nonlinear Wolff potentials.  The Wolff potential was introduced by Maz'ya and Havin \cite{mh31} and the relevant important contributions were attributed to Hedberg and Wolff \cite{hw32}.   The fundamental results due to  Kilpel$\ddot{a}$inen and Mal$\acute{y}$ \cite{km5,km6} consist of  the pointwise estimates of solutions to the nonlinear equations of $p$-Laplace type by the   Wolff potentials. Later these results have been extended to a general setting by Trudinger and Wang \cite{tw7,tw8}  by means of  a different  approach. Further remarkable results for the gradient of solutions have been achieved by  Duzaar, Kuusi and Mingione \cite{dm10,dm11,km12,m9}.
 Potential estimates are comprehensive tools to obtain regularity estimates. Indeed, they have been intensively studied and extended in several directions; we refer to
\cite{dm21,km13,km21} for the elliptic systems,  \cite{km22,km23} for the parabolic equations,
  \cite{s26,s28} for single obstacle problems,   respectively.
For supplementary results, please see \cite{km15,km14,km24,km25,mz1,xiong5}.

As for the elliptic equations with Orlicz growth, Baroni \cite{b13}  obtained  pointwise   gradient estimates for solutions of  equations with constant coefficients  by the nonlinear potentials.  Later, these results were upgraded by Xiong and Zhang and Ma in \cite{xiong1,xiong2} to single obstacle   problems with measure data and Orlicz growth. In this paper we consider double obstacle problems. To overcome the difficulty in this case, we establish a  suitable comparison estimate to transfer the double obstacle problems to the single one, so that we can still apply the analytic tools in \cite{km12,m9}.
Initially, we deduce excess decay estimates for solutions of double obstacle problems through the utilization of  certain comparison estimates.  Subsequently, through iterative procedures based on the obtained estimates, we derive pointwise estimates for fractional maximal operators. Ultimately,  these estimates allow us to draw conclusions about pointwise and oscillation estimates for the gradients of solutions.

Next, we summarize our main results. We begin by presenting some  definitions, notations and assumptions.
\begin{definition}
A function $G :[0,+\infty)\rightarrow[0,+\infty)$ is called a Young function if it is convex and $G(0)=0$.
\end{definition}
\begin{definition}\label{class}
Assume that G is a Young function,  the Orlicz class $K^{G}(\Omega)$ is the set of all measurable functions $u : \Omega\rightarrow\mathbb{R}$ satisfying
\begin{equation*}
\int_\Omega G(|u|) \operatorname{d}\!\xi < \infty.\nonumber
\end{equation*}
The Orlicz space $L^{G}(\Omega)$ is the linear hull  of the Orlicz class  $K^{G}(\Omega)$ with the Luxemburg norm
\begin{equation*}
\Vert u \Vert_{L^G(\Omega)}:=\inf\left\lbrace \alpha>0: \ \ \int_{\Omega}G\left(\frac{|u|}{\alpha} \right) \operatorname{d}\!\xi \leqslant1\right\rbrace .
\end{equation*}
Furthermore, the Orlicz-Sobolev space $W^{1,G}(\Omega)$ is defined as
\begin{equation*}
W^{1,G}(\Omega)=\left\lbrace  u\in L^{G}(\Omega)\cap W^{1,1}(\Omega) \ \vert \ Du\in L^{G}(\Omega)\right\rbrace.\nonumber
\end{equation*}
The space $W^{1,G}(\Omega)$, equipped with the norm
$\Vert u \Vert_{W^{1,G}(\Omega)}:=\Vert u \Vert_{L^G(\Omega)}+\Vert Du \Vert_{L^G(\Omega)},$ is a Banach space. Clearly, $W^{1,G}(\Omega)=W^{1,p}(\Omega)$, the standard Sobolev space, if $G(t)=t^p$ with $p\geqslant1$.
\end{definition}
Note that for the Luxemburg norm there holds the inequality
\begin{equation*}
\Vert u \Vert_{L^G(\Omega)}\leqslant \int_{\Omega}G(|u|) \operatorname{d}\!\xi +1.
\end{equation*}

 The subspace $W_{0}^{1,G}(\Omega)$ is the closure of $C_{0}^{\infty}(\Omega)$ in $W^{1,G}(\Omega)$. The above properties about Orlicz space can be found in \cite{rr28}.

For every $k>0$ we let
\begin{equation*}
T_{k}(s):=
\left\{\begin{array}{r@{\ \ }c@{\ \ }ll}
s\ \ \ \ \ \ \ \ if\ \ |s|\leqslant k\,, \\[0.05cm]
k\ sgn(s)\ \ \ \ \ \ \ if\ \ |s|> k\,. \\[0.05cm]
\end{array}\right.
\end{equation*}

Moreover, for given Dirichlet boundary data $h\in W^{1,G}(\Omega)$, we define
$$\mathcal{T}^{1,G}_{h}(\Omega):=\left\lbrace u: \Omega\rightarrow \mathbb{R} \ measurable: T_{k}(u-h)\in W_{0}^{1,G}(\Omega) \ \ for \ all \ k>0\right\rbrace. $$
We now give the definition of approximable solutions.

\begin{definition}\label{opdy}
Suppose that  obstacle functions $\psi_1, \psi_2 \in W^{1,G}(\Omega)$, measure data $\mu \in \mathcal{M}_{b}(\Omega)$ and boundary data $h \in W^{1,G}(\Omega)$ with $ \psi_1\leqslant h \leqslant  \psi_2$ a.e. are given. We say that $u \in \mathcal{T}^{1,G}_{h}(\Omega)$ with $ \psi_1\leqslant u \leqslant  \psi_2$ a.e. on $\Omega$ is  a limit of approximating solutions of the double obstacle problem $OP(\psi_1, \psi_2; \mu)$ if there exist functions
$$f_{i} \in (W^{1,G}(\Omega))'\cap L^{1}(\Omega)\ \  with\ \  f_{i}\stackrel{\ast}\rightharpoonup \mu \ in \ \mathcal{M}_{b}(\Omega) \ \ as \ i\rightarrow+\infty$$
 satisfies
$$\limsup_{i\rightarrow+\infty}\int_{B_R(x_0)}|f_i|dx\leqslant|\mu|(\overline{B_R(x_0)})$$
and solutions $u_{i}\in W^{1,G}(\Omega)$ with $ \psi_1\leqslant u_i \leqslant  \psi_2$ of the variational inequalities
\begin{equation}\label{opdy1}
\int_{\Omega}a(x,Du_{i})\cdot D(v-u_{i})dx\geqslant \int_{\Omega}f_{i}(v-u_{i})dx
\end{equation}
for $\forall \ v \in u_{i}+W_{0}^{1,G}(\Omega)$ with $ \psi_1\leqslant v \leqslant  \psi_2$ a.e. in $\Omega$, such that for $i\rightarrow +\infty$,
$$u_{i}\rightarrow u \ \ a.e. \ \ \  in \ \  \Omega$$
and $$u_{i}\rightarrow u \ \ \ in \ \ \ W^{1,1}(\Omega).$$
\end{definition}
The existence of approximating solutions, converging as defined above, has been proved in our previous work \cite{xiong1} for the inequalities \eqref{opdy1} with a single obstacle. Consequently, the existence in this paper can be attained through minor modifications.

Let us next  turn our attention to  the classical non-linear Wolff potential which is defined by
\begin{equation*}
W^{\mu}_{\beta,p}(x,R):=\int_0^R\left( \frac{|\mu|(B_{\rho}(x))}{\rho^{n-\beta p}}\right) ^{1/(p-1)}\frac{\operatorname{d}\!\rho}{\rho}
\end{equation*}
for parameters $\beta \in (0,n]$  and $p>1$.  We also abbreviate
\begin{equation*}
W^{[\psi_1]}_{\beta,p}(x,R):=\int_0^R\left( \frac{D\Psi_1(B_{\rho}(x))}{\rho^{n-\beta p}}\right) ^{1/(p-1)}\frac{\operatorname{d}\!\rho}{\rho}
\end{equation*}
and
\begin{equation*}
W^{[\psi_2]}_{\beta,p}(x,R):=\int_0^R\left( \frac{D\Psi_2(B_{\rho}(x))}{\rho^{n-\beta p}}\right) ^{1/(p-1)}\frac{\operatorname{d}\!\rho}{\rho}
\end{equation*}
with $D\Psi_1(B_{\rho}(x)):=\int_{B_{\rho}(x)}\left(\frac{g(|D\psi_1|)}{|D\psi_1|}|D^{2}\psi_1|+1\right)d\xi$, $D\Psi_2(B_{\rho}(x)):=\int_{B_{\rho}(x)}| \ddiv a(x,D\psi_2)|d\xi$.

  Now we recall the definitions of the centered  maximal operators as follows.
\begin{definition}
Let $ \beta\in[0,n], x\in \Omega$ and $ R<dist(x,\partial\Omega) $, and let $u$ be an $ L^1(\Omega) $-function or a measure with finite mass; the restricted fractional $ \beta $ maximal function of  $u$ is defined by
\begin{equation*}
M_{\beta,R}(u)(x):=\sup_{0<r\leqslant R}r^{\beta}\frac{|u|(B_r(x))}{|B_r(x)|}=\sup_{0<r\leqslant R}r^{\beta}\fint_{B_r(x)}|u|\operatorname{d}\!\xi.
\end{equation*}
\end{definition}
Note that when $ \beta=0 $ the one defined above is the classical Hardy-Littlewood maximal operator.

Moreover, we define
\begin{equation*}
\overline{M}_{\beta,R}(\psi_1)(x):=\sup_{0<r\leqslant R}r^{\beta}\frac{D \Psi_1(B_r(x))}{|B_r(x)|}=\sup_{0<r\leqslant R}r^{\beta}\fint_{B_r(x)}\left(\frac{g(|D\psi_1|)}{|D\psi_1|}|D^{2}\psi_1|+1\right)\operatorname{d}\!\xi
\end{equation*}
and
\begin{equation*}
\overline{M}_{\beta,R}(\psi_2)(x):=\sup_{0<r\leqslant R}r^{\beta}\frac{D \Psi_2(B_r(x))}{|B_r(x)|}=\sup_{0<r\leqslant R}r^{\beta}\fint_{B_r(x)} | \ddiv a(x,D\psi_2)|\operatorname{d}\!\xi.
\end{equation*}
\begin{definition}
Let $ \beta\in[0,n], x\in \Omega$ and $ R<dist(x,\partial\Omega) $, and let $u$ be an $ L^1(\Omega) $-function or a measure with finite mass; the restricted sharp  fractional $ \beta $ maximal function of  $u$ is defined by
\begin{equation*}
M^{\#}_{\beta,R}(u)(x):=\sup_{0<r\leqslant R}r^{-\beta}\fint_{B_r(x)}|u-(u)_{B_r(x)}|\operatorname{d}\!\xi.
\end{equation*}
\end{definition}
When $ \beta=0 $ the one defined above is the Fefferman-Stein sharp maximal operator.

Throughout this paper we write
$$\theta(a,B_{r}(x_0))(x):=\sup_{\eta \in \mathbb{R}^n\setminus \left\lbrace0\right\rbrace  }\frac{|a(x,\eta)-\overline{a}_{B_{r}(x_0)}(\eta)|}{g(|\eta|)}, $$
where $$\overline{a}_{B_{r}(x_0)}(\eta):=\fint_{B_{r}(x_0)}a(x,\eta)dx.$$
Then we can easily check from \eqref{a(x)1} that $|\theta(a,B_{r}(x_0))|\leqslant2L$.
\begin{definition}\label{df1}
We say that  $a(x,\eta)$ is ($\delta$, R)-vanishing for some $\delta, R>0$,  if
\begin{equation}\label{a(x)2}
\omega(R):=\sup_{{\substack{ x_{0}\,\in\,\Omega\\0<r\leq R}}}  \fint_{B_{r}(x_{0})}\theta(a,B_{r}(x_{0}))\operatorname{d}\!x \leq\delta.
\end{equation}
Moreover, ${\omega(\cdot)}^{\frac{1}{1+s_g}}$ will be called Dini-BMO regular if
\begin{equation*}
\sup_{r>0}\int_{0}^{r}{\omega(\rho)}^{\frac{1}{1+s_g}}\frac{d\rho}{\rho}< +\infty;
\end{equation*}
${\omega(\cdot)}^{\frac{1}{1+s_g}}$ will be called Dini-H$\ddot{o}$lder regular if
\begin{equation*}
\sup_{r>0}\int_{0}^{r}\frac{{\omega(\rho)}^{\frac{1}{1+s_g}}}{\rho^{\alpha}}\frac{d\rho}{\rho}< +\infty. \ \ \ \
\end{equation*}
\end{definition}
Throughout this paper, we always assume that $\delta$ is a small positive constant.

Finally we state our main results of this paper.
The first result we are going to present is some pointwise estimates of certain maximal operators of approximable solutions.
\begin{theorem}\label{Th1}
Under the assumptions  \eqref{a(x)1},  \eqref{a(x)3} and   \eqref{a(x)2}, let $\psi_1, \psi_2 \in W^{1,G}(\Omega)\cap W^{2,1}(\Omega)$,  $ \ddiv a(x,D\psi_2),  \frac{g(|D\psi_1|)}{|D\psi_1|}|D^{2}\psi_1| \in L^{1}_{loc}(\Omega) $. Assume that $u \in W^{1,1}(\Omega)$ with $\psi_1 \ls u \ls \psi_2$ a.e. is a limit of approximating solutions to $OP(\psi_1, \psi_2; \mu)$ with measure data $\mu \in \mathcal{M}_{b}(\Omega)$(in the sense of Definition \ref{opdy}),  and ${\omega(\cdot)}^{\frac{1}{1+s_g}}$ is Dini-BMO regular in the sense of Definition \ref{df1},
then  there exists a constant $c=c(data) $ and a radius $R_{0}>0$, depending on $data,\omega(\cdot)$,  such that
\begin{eqnarray}\label{1.11}
\nonumber &&M_{\alpha,R}^{\#}(u)(x)+M_{1-\alpha,R}(Du)(x)\\ \nonumber
&\leqslant& c\left[ W_{1-\alpha+\frac{\alpha}{i_g+1},i_g+1}^{\mu}(x,2R)+W_{1-\alpha+\frac{\alpha}{i_g+1},i_g+1}^{[\psi_1]}(x,2R)+W_{1-\alpha+\frac{\alpha}{i_g+1},i_g+1}^{[\psi_2]}(x,2R)\right]  \\
&+&cR^{1-\alpha}\fint_{B_R(x)}|Du|d\xi+c\int_{0}^{2R}[\omega(\rho)]^{\frac{1}{1+s_g}}G^{-1}\left[\fint_{B_{\rho}(x)}[G(|D\psi_1|)+G(|\psi_1|)]d\xi \right]\frac{d\rho}{\rho^{\alpha}}.
\end{eqnarray}
Further assume that
\begin{equation}\label{wtiaojian}
\sup_{r>0}\frac{[\omega(r)]^{\frac{1}{1+s_g}}}{r^{\widehat{\alpha}}}\leqslant c_0,
\end{equation}
for some $\widehat{\alpha}\in[0,\beta)$, then
\begin{eqnarray}\label{1.122}
\nonumber &&M_{\alpha,R}^{\#}(Du)(x) \\ \non
&\leqslant& c\left\lbrace  \left[  M_{1-\alpha i_g,R}(\mu)(x)\right] ^{\frac{1}{i_g}}+\left[  \overline{M}_{1-\alpha i_g,R}(\psi_1)(x)\right] ^{\frac{1}{i_g}}+\left[  \overline{M}_{1-\alpha i_g,R}(\psi_2)(x)\right] ^{\frac{1}{i_g}}\right\rbrace  \\ \nonumber
&+&c\left[ W_{\frac{1}{i_g+1},i_g+1}^{\mu}(x,2R)+W_{ \frac{1}{i_g+1},i_g+1}^{[\psi_1]}(x,2R)+W_{ \frac{1}{i_g+1},i_g+1}^{[\psi_2]}(x,2R)\right]   \\
&+&cR^{-\alpha}\fint_{B_R(x)}|Du|\operatorname{d}\! \xi+c\int_{0}^{2R}[\omega(\rho)]^{\frac{1}{1+s_g}}G^{-1}\left[\fint_{B_{\rho}(x)}[G(|D\psi_1|)+G(|\psi_1|)]d\xi \right]\frac{d\rho}{\rho^{1+\alpha}}
\end{eqnarray}
holds uniformly in $\alpha\in[0,\widehat{\alpha}]$,
where $c=c(data,\widehat{\alpha},c_0,\omega(\cdot),diam(\Omega))$, $ \ \ \ \ \\ $$0<R\leqslant  \min \left\lbrace R_0,dist(x_0,\partial \Omega)\right\rbrace $ and $\beta$ is as in Lemma \ref{lemma1.6}.
\end{theorem}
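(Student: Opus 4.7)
The plan is to follow the nonlinear potential strategy of Duzaar, Kuusi and Mingione adapted to the Orlicz setting, with the key new ingredient being a comparison procedure that reduces the double-obstacle problem to two successive unconstrained ones so that the analytic machinery of \cite{km12,m9} and of our previous papers \cite{xiong1,xiong2} becomes applicable. Throughout I would fix $x\in\Omega$, a small ball $B_{r}=B_{r}(x)\subset\Omega$, and work first at the level of approximating solutions $u_i$ of \eqref{opdy1}.

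The first and most delicate step is a three-layer comparison on $B_r$. I would introduce an intermediate function $w_i\in u_i+W_{0}^{1,G}(B_r)$ solving the \emph{unconstrained} equation $-\ddiv a(\xi,Dw_i)=0$ on $B_r$, and then a second one $v_i$ solving the homogeneous frozen-coefficient equation $-\ddiv \overline{a}_{B_r}(Dv_i)=0$. To pass from $u_i$ to $w_i$ one must absorb both obstacle constraints into a right-hand side; testing the variational inequality \eqref{opdy1} against the admissible perturbation obtained from truncations at $\psi_1$ and $\psi_2$ (exactly as in the single-obstacle trick of \cite{xiong1}, but now iterated on both sides) converts the upper constraint into a contribution controlled by $\int_{B_r}|\ddiv a(\xi,D\psi_2)|\,d\xi=D\Psi_2(B_r)$ and the lower constraint into a contribution controlled by $\int_{B_r}\bigl(\tfrac{g(|D\psi_1|)}{|D\psi_1|}|D^{2}\psi_1|+1\bigr)\,d\xi=D\Psi_1(B_r)$. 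The resulting Orlicz comparison takes the schematic form
\begin{equation*}
\fint_{B_r} G(|Du_i-Dw_i|)\,d\xi \ls c\, G\!\left(\left[\tfrac{|\mu|(B_r)+D\Psi_1(B_r)+D\Psi_2(B_r)}{r^{n-1}}\right]^{\frac{1}{i_g+1}} r^{\frac{1}{i_g+1}}\right),
\end{equation*}
and this is the principal obstacle, since it is what forces the specific exponent $1/(i_g+1)$ appearing in the Wolff potentials of the theorem. The further step $w_i\mapsto v_i$ introduces the coefficient oscillation $\omega(r)^{1/(1+s_g)}$ through a standard higher-integrability and Gehring argument combined with the $(\delta,R)$-vanishing condition \eqref{a(x)2}.

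The second step is to exploit Lieberman's $C^{1,\beta}$ theory for the reference problem $-\ddiv \overline{a}_{B_r}(Dv_i)=0$, which delivers an excess decay
\begin{equation*}
\fint_{B_\rho} |Dv_i-(Dv_i)_{B_\rho}|\,d\xi \ls c\,(\rho/r)^{\beta}\fint_{B_r}|Dv_i|\,d\xi,\qquad 0<\rho\le r.
\end{equation*}
Chaining this with the two comparison estimates from the previous step via the triangle inequality produces a dyadic excess inequality for $Du_i$ of the Kuusi--Mingione type: at each scale $r_k=\sigma^k R$ the oscillation of $Du_i$ on $B_{r_k}$ decays geometrically up to an error controlled by the three Wolff-density terms at that scale and the coefficient error $\omega(r_k)^{1/(1+s_g)}\,G^{-1}(\fint_{B_{r_k}}[G(|D\psi_1|)+G(|\psi_1|)]\,d\xi)$.

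The third step is to iterate this one-step excess inequality along the dyadic scales and pass to the limit $i\to\infty$. Summing the geometric series and invoking the Dini-BMO regularity of $\omega(\cdot)^{1/(1+s_g)}$ to ensure convergence of the coefficient-error contributions yields the pointwise bound \eqref{1.11} for $M^{\#}_{\alpha,R}(u)+M_{1-\alpha,R}(Du)$; the term $cR^{1-\alpha}\fint_{B_R(x)}|Du|\,d\xi$ absorbs the initial scale of the iteration. For \eqref{1.122}, I would revisit the same excess decay inequality but extract the $L^{\infty}$ bound on $Du$ by integrating a single scale against the stronger assumption \eqref{wtiaojian}: the extra decay $r^{\widehat\alpha}$ of $\omega^{1/(1+s_g)}$ lets one bound the oscillation of $Du$ at scale $r$ in terms of fractional maximal functions $M_{1-\alpha i_g,R}(\mu)^{1/i_g}$ and analogous quantities for $\psi_1,\psi_2$, plus the smaller-exponent Wolff potentials $W_{\frac{1}{i_g+1},i_g+1}$. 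Verifying that the constants are \emph{uniform} in $\alpha\in[0,\widehat\alpha]$ is the main bookkeeping difficulty and is precisely why the Dini-H\"older-type hypothesis \eqref{wtiaojian}, rather than the mere Dini-BMO condition, is required here.
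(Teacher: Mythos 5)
The overall architecture you propose---comparison to reference problems, excess decay from Lieberman's theory, dyadic iteration of Campanato-type averages, and Wolff potential summation---matches the paper's proof of Theorem \ref{Th1}, which indeed iterates the excess decay estimate of Lemma \ref{zongjie} over scales $R_i=R/H^i$ and sums the resulting geometric series. However, there is a genuine gap in your comparison step, and it is located exactly where the paper's new contribution lies.

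You propose to compare $u_i$ directly to an \emph{unconstrained} homogeneous solution $w_i$ and to absorb both obstacles at once by ``testing the variational inequality against the admissible perturbation obtained from truncations at $\psi_1$ and $\psi_2$.'' This is the crux, and it is not resolved by your outline. If $w_i$ solves $-\ddiv a(\xi,Dw_i)=0$ with no constraints, the natural test function $v = u_i + T_k(w_i - u_i)$ need not lie in $[\psi_1,\psi_2]$, so it is not admissible in \eqref{opdy1}; clipping it back to the constraint set produces boundary-of-coincidence-set terms that you do not estimate and that do not obviously reduce to the Wolff densities appearing in the theorem. The paper avoids this by a five-step chain, whose first and crucial step (Lemma \ref{bijiao}) compares $u$ to the solution $w_1$ of a \emph{single}-obstacle problem constrained below by $\psi_1$ with forcing $-\ddiv a(x,D\psi_2)$ on the right-hand side; a maximum-principle argument (testing with $v=\min\{w_1,\psi_2\}$) then shows $w_1\le\psi_2$ a.e., which is precisely what makes the truncation tests $u+T_k(w_1-u)$ and $w_1-T_k(w_1-u)$ admissible for \emph{both} problems. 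After this reduction, the remaining steps $w_1\to w_2\to w_3\to w_4\to w_5$ successively remove the forcing, freeze coefficients, and drop the lower obstacle, each using a pre-existing lemma from \cite{xiong1,xiong2}. Your two-step chain skips this mechanism entirely.

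Two further inaccuracies: the schematic comparison bound you write is at the $G$-level with exponent $1/(i_g+1)$, whereas the actual comparison estimates (Corollary \ref{coro} and Lemmas \ref{w1-w2}--\ref{fbjg}) are at the $L^1$-level with exponent $1/i_g$; the $1/(i_g+1)$ in the theorem only emerges when rewriting the discrete sums as Wolff potentials $W_{\beta,i_g+1}$, whose definition already contains the $1/(p-1)=1/i_g$ power. Also, obtaining the $L^1$-comparison from the $G$-energy inequality requires the truncation/interpolation Lemma \ref{qianqi}, which your sketch does not invoke; without it the passage from energy to $L^1$ averages is not justified for measure data.
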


  The previous theorem implies the  pointwise and oscillation estimates for the gradients of solutions to double obstacle problems.

\begin{theorem}\label{Th2}
In the same hypotheses of Theorem \ref{Th1}, let $B_{4R}(x_0)\subseteq \Omega, x,y \in B_{\frac{R}{4}}(x_0), 0<R\leqslant\frac{1}{2},$ and for some $\widehat{\alpha}\in[0,\beta)$, assume that ${\omega(\cdot)}^{\frac{1}{1+s_g}}$ is  Dini-H$\ddot{o}$lder regular, that is
\begin{equation}
c_0:=\sup_{r>0}\int_{0}^{r}\frac{[\omega(\rho)]^{\frac{1}{1+s_g}}}{\rho^{\widehat{\alpha}}}\frac{d\rho}{\rho}<+\infty.
\end{equation}
Then  there exists a constant $c=c(n,i_g,s_g,v, L, c_0,  \widehat{\alpha},\omega(\cdot),diam(\Omega)) $ such that
\begin{eqnarray} \label{du}\nonumber 
\nonumber &&|Du(x_0)|  \\\nonumber 
&\leqslant& c\left[ \fint_{B_R(x_0)}|Du|d\xi+W_{\frac{1}{i_g+1},i_g+1}^{\mu}(x_0,2R)+W_{ \frac{1}{i_g+1},i_g+1}^{[\psi_1]}(x_0,2R)+W_{ \frac{1}{i_g+1},i_g+1}^{[\psi_2]}(x_0,2R)\right] \\
&+&c\int_{0}^{2R}[\omega(\rho)]^{\frac{1}{1+s_g}}G^{-1}\left[\fint_{B_{\rho}(x_0)}[G(|D\psi_1|)+G(|\psi_1|)]d\xi \right]\frac{d\rho}{\rho},
\end{eqnarray}
\begin{eqnarray}\label{du-du} \nonumber
&&\vert Du(x)-Du(y) \vert \\ \nonumber
&\leq &c\fint_{B_R(x_0)}\vert Du\vert\operatorname{d}\!\xi\left(\frac{|x-y|}{R} \right) ^{\alpha}  \\ \nonumber
&+& c \left[  W^{\mu}_{-\alpha+\frac{1+\alpha}{1+i_g},i_g+1}(x,2R)
+ W^{[\psi_1]}_{-\alpha+\frac{1+\alpha}{1+i_g},i_g+1}(x,2R)+W^{[\psi_2]}_{-\alpha+\frac{1+\alpha}{1+i_g},i_g+1}(x,2R)\right]|x-y|^{\alpha} \\ \nonumber
&+&c \left[  W^{\mu}_{-\alpha+\frac{1+\alpha}{1+i_g},i_g+1}(y,2R)
+ W^{[\psi_1]}_{-\alpha+\frac{1+\alpha}{1+i_g},i_g+1}(y,2R)+ +W^{[\psi_2]}_{-\alpha+\frac{1+\alpha}{1+i_g},i_g+1}(y,2R)\right]|x-y|^{\alpha} \\ \nonumber
&+&c \left[ \int_{0}^{2R}[\omega(\rho)]^{\frac{1}{1+s_g}}G^{-1}\left[\fint_{B_{\rho}(x)}[G(|D\psi_1|)+G(|\psi_1|)]d\xi \right]\frac{d\rho}{\rho^{1+\alpha}} \right]|x-y|^{\alpha} \\
&+&c \left[  \int_{0}^{2R}[\omega(\rho)]^{\frac{1}{1+s_g}}G^{-1}\left[\fint_{B_{\rho}(y)}[G(|D\psi_1|)+G(|\psi_1|)]d\xi \right]\frac{d\rho}{\rho^{1+\alpha}}\right]|x-y|^{\alpha},
\end{eqnarray}
holds uniformly in $\alpha\in[0,\widehat{\alpha}]$, where  $\beta$ is as in Lemma \ref{lemma1.6}  and $x,y$ is the Lebesgue's point of $Du$.
\end{theorem}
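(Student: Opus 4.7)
The plan is to derive both \eqref{du} and \eqref{du-du} from the sharp maximal function bound \eqref{1.122} of Theorem \ref{Th1}. A preliminary observation: the Dini-H\"older assumption implies the pointwise bound \eqref{wtiaojian}, since $\omega(\cdot)$ is non-decreasing in its definition, and hence
\[
[\omega(r/2)]^{\frac{1}{1+s_g}} r^{-\widehat\alpha}\log 2 \leq c\int_{r/2}^{r}\frac{[\omega(\rho)]^{\frac{1}{1+s_g}}}{\rho^{\widehat\alpha}}\frac{d\rho}{\rho}\leq c\,c_0.
\]
Rescaling $r\mapsto 2r$ yields \eqref{wtiaojian} and makes \eqref{1.122} available uniformly on $[0,\widehat\alpha]$.

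For the pointwise bound \eqref{du}, I would exploit that $x_0$ is a Lebesgue point of $Du$ and the telescoping identity
\[
|Du(x_0)| \leq |(Du)_{B_R(x_0)}| + \sum_{k=0}^{\infty}\bigl|(Du)_{B_{R/2^{k+1}}(x_0)} - (Du)_{B_{R/2^{k}}(x_0)}\bigr|.
\]
Each dyadic increment is dominated by $c\fint_{B_{R/2^k}(x_0)}|Du-(Du)_{B_{R/2^k}(x_0)}|\,d\xi$; applying \eqref{1.122} with $\alpha=0$ on the ball $B_{R/2^k}(x_0)$ in place of $B_R(x_0)$ and summing the resulting dyadic series via a Riemann-type comparison produces precisely the Wolff potentials $W^{\mu}_{1/(i_g+1),i_g+1}(x_0,2R)$, $W^{[\psi_i]}_{1/(i_g+1),i_g+1}(x_0,2R)$, together with the $\omega$-Dini integral and the leading average $\fint_{B_R(x_0)}|Du|\,d\xi$. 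The restricted fractional maximal operators appearing in \eqref{1.122} get absorbed into the corresponding Wolff potentials once one notes that the parameter identity $\beta\,p = 1$ holds for $\beta = 1/(i_g+1)$, $p = i_g+1$.

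For the oscillation estimate \eqref{du-du}, I would invoke the classical Campanato-type characterization of H\"older continuity via sharp maximal functions: for Lebesgue points $x,y\in B_{R/4}(x_0)$,
\[
|Du(x)-Du(y)|\leq c|x-y|^{\alpha}\bigl[M^{\#}_{\alpha,R}(Du)(x)+M^{\#}_{\alpha,R}(Du)(y)\bigr] + c\Bigl(\frac{|x-y|}{R}\Bigr)^{\alpha}\fint_{B_R(x_0)}|Du|\,d\xi,
\]
proved by telescoping from $B_R(x_0)$ down to balls of radius $|x-y|$ centered at $x$ and $y$ through a chain of pairwise comparable intermediate balls. Substituting \eqref{1.122} for each sharp maximal function and folding the restricted fractional maximal operators into Wolff potentials with the shifted exponent $-\alpha+(1+\alpha)/(1+i_g)$ delivers \eqref{du-du}, uniformly in $\alpha\in[0,\widehat\alpha]$.

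The main obstacle will be the reduction of the restricted fractional maximal terms to Wolff potentials with the correct exponents. One observes that with $\beta = -\alpha+(1+\alpha)/(1+i_g)$ and $p=i_g+1$ we have $\beta\,p = 1-\alpha i_g$, precisely matching the exponent in $M_{1-\alpha i_g,R}$. Since $\mu(B_r)$, $D\Psi_1(B_r)$, $D\Psi_2(B_r)$ are all non-decreasing in $r$, one has
\[
\Bigl(\frac{\mu(B_r)}{r^{n-(1-\alpha i_g)}}\Bigr)^{\!1/i_g} \leq c\int_{r}^{2r}\Bigl(\frac{\mu(B_\rho)}{\rho^{n-(1-\alpha i_g)}}\Bigr)^{\!1/i_g}\frac{d\rho}{\rho},
\]
so that taking the supremum in $r\in(0,R]$ reproduces the corresponding Wolff potential; the arguments for $\psi_1,\psi_2$ are identical. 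The residual bookkeeping---propagating the $\omega$-Dini correction under the telescoping---is routine thanks to the monotonicity of $\omega$ and to the well-behavedness of $G^{-1}$ under dyadic rescaling, which ensures that the discrete dyadic sum $\sum_k[\omega(R/2^k)]^{1/(1+s_g)}\cdots$ is comparable with the continuous integral appearing on the right-hand sides of \eqref{du} and \eqref{du-du}.
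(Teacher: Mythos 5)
Your preliminary reduction of the Dini--H\"older assumption to the pointwise bound \eqref{wtiaojian} is correct, since $\omega(\cdot)$ is non-decreasing by definition. However, your proposed derivation of \eqref{du} has a genuine gap.

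Your telescoping for $|Du(x_0)|$ applies \eqref{1.122} at $\alpha=0$ on \emph{every} scale $R/2^k$ and then sums over $k$. The right-hand side of \eqref{1.122} at scale $R/2^k$ still contains the \emph{full} Wolff potential $W^{\mu}_{1/(i_g+1),i_g+1}(x_0,2R/2^k)$, the maximal function $[M_{1,R/2^k}(\mu)(x_0)]^{1/i_g}$, and the average $\fint_{B_{R/2^k}}|Du|\,d\xi$. None of these terms decay summably in $k$: the maximal functions are bounded above by $[M_{1,R}(\mu)(x_0)]^{1/i_g}$ but do not go to zero; the averages $\fint_{B_{R/2^k}}|Du|$ tend to $|Du(x_0)|$ rather than $0$; and although $W^{\mu}_{1/(i_g+1),i_g+1}(x_0,2R/2^k)\to 0$ provided the Wolff potential at scale $2R$ is finite, the rate of decay is not controlled, and $\sum_k W^{\mu}_{1/(i_g+1),i_g+1}(x_0,2R/2^k)$ diverges for admissible measures (for instance when $|\mu|(B_\rho)\sim\rho^{n-1}/\log^2(1/\rho)$ with $i_g=1$). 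In short, the telescoping double-counts every scale of the Wolff integral, producing a logarithmic loss. The paper sidesteps this by going \emph{inside} the proof of Theorem \ref{Th1} and using the intermediate iteration estimate \eqref{a0guji}: there, each scale $R_i$ contributes a \emph{single} increment $\left[\,|\mu|(\overline{B_i})/R_i^{n-1}\,\right]^{1/i_g}$, and the sum over $i$ is precisely the Wolff potential, with no loss. Alternatively, one can read \eqref{du} off \eqref{1.11} at $\alpha=1$ directly: at a Lebesgue point $|Du(x_0)|\leq M_{0,R}(Du)(x_0)$, and \eqref{1.11} with $\alpha=1$ bounds $M_{0,R}(Du)(x_0)$ by exactly the quantities in \eqref{du}. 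Either way, no telescoping through \eqref{1.122} is needed, and your route as stated would fail.

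For \eqref{du-du} your Campanato-style strategy is closer in spirit to the paper, and the absorption of $M_{1-\alpha i_g,R}$ into Wolff potentials is handled correctly (the computation with $\int_{3r/4}^r$ is exactly what the paper does). But there is a subtlety you should address: the dyadic telescoping behind the Campanato inequality $|Du(x)-(Du)_{B_r(x)}|\lesssim r^\alpha M^{\#}_{\alpha,r}(Du)(x)$ relies on the geometric convergence of $\sum_k(r/2^k)^\alpha$ and hence requires $\alpha>0$, whereas the theorem asserts the bound uniformly for $\alpha\in[0,\widehat\alpha]$, including $\alpha=0$. The paper instead invokes \eqref{a0guji} with $S=(Du)_{B_{3r}(x)}$ and $r=|x-y|/2$, takes $m\to\infty$ to obtain $|Du(x)-S|$ directly from the iteration (where convergence comes from the contractive factor $1/2$, not from $\alpha>0$), and only then plugs in \eqref{1.122}. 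Your argument would need a separate treatment of the endpoint $\alpha=0$, or should be rephrased to use the iteration as the paper does.
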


\begin{remark}
 To the best of our knowledge, there is not much research on the  gradient estimate associated with  the double obstacle problems, and our work is new,  and our results provide a new perspective  for understanding  the solutions to the double obstacle problems.
\end{remark}
The remainder of this paper is organized as follows. Section 2 contains some notions and preliminary results. In Section 3,   we  derive  the  excess decay estimate for solutions to these problems by using some comparison estimates.  In Section 4, we obtain  pointwise and oscillation estimates for the gradients of solutions  by the  Wolff potentials.

\section{Preliminaries}\label{section2}
In this section, we introduce some notions and results which will be used in this paper. For brevity in notation, we gather the dependencies of specific constants on the parameters of our inquiry as $$data = data(n,i_g,s_g,l,L).$$
For an integrable map $f: \Omega \rightarrow \mathbb{R}^n $, we write 
$$(f)_{\Omega}:=\fint_{\Omega}fdx:=\frac{1}{|\Omega|}\int_{\Omega}fdx.$$
For $q\in[1,\infty)$, it is easily verified that
\begin{equation} \label{1.8}
\parallel f-(f)_{\Omega}\parallel_{L^q(\Omega)}\leqslant2\min_{c\in \mathbb{R}^m}\parallel f-c\parallel_{L^q(\Omega)}.
\end{equation}

\begin{definition}
A Young function $G$ is called an $N$-function if
$$0<G(t)<+\infty \ \ for \ t>0$$
and
\begin{equation}\label{nhanshu}
\lim_{t\rightarrow+\infty}\frac{G(t)}{t}=\lim_{t\rightarrow0}\frac{t}{G(t)}=+\infty.
\end{equation}
It's obvious that $G(t)$ is an $N$-function.

The Young conjugate  of a Young function G will be denoted by $G^{\ast}$ and defined as
$$G^{\ast}(t)=\sup_{s\geq 0}\left\lbrace st-G(s)\right\rbrace  \ \ for \ t\geq 0.$$
\end{definition}
In particular,  if $G$ is an $N$-function, then $G^{\ast}$ is  an $N$-function as well.
\begin{definition}
A Young function G is said to satisfy the global $\vartriangle_2$ condition, denoted by $G\in\vartriangle_2$, if there exists a positive constant C such that for every $t>0$,
\begin{equation*}
G(2t)\leq CG(t).
\end{equation*}
Similarly, a Young function G is said to satisfy the global $\bigtriangledown_2$ condition, denoted by $G\in\bigtriangledown_2$, if there exists a  constant $\theta >1$ such that for every $t>0$,
\begin{equation*}
G(t)\leq \frac{G(\theta t)}{2\theta}.
\end{equation*}
\end{definition}

\begin{remark}  \label{remark1}
For an increasing function $f: \mathbb{R}^+\rightarrow\mathbb{R}^+$ satisfying  $\vartriangle_2$ condition $f(2t)\lesssim f(t)$ for $t\geqslant0$, it is easy to prove that $f(t+s)\leqslant c[f(t)+f(s)]$ holds for every $t,s\geqslant0$.
\end{remark}

Next let us recall a basic property of $N$-function, which will be used in the sequel.
\begin{lemma}\cite{yz1}\label{gyoung}
If $G$ is an $N$-function, then $G$ satisfies the following Young's inequality
$$st\leq G^{*}(s)+G(t), \ \ \ for \ \ \forall s,t\geq0.$$
Furthermore, if $G\in \bigtriangleup_{2}\cap \bigtriangledown_{2}$ is an $N$-function, then $G$ satisfies the following Young's inequality with $\forall \varepsilon >0$,
$$st\leq \varepsilon G^{*}(s)+c(\varepsilon)G(t), \ \ \ for \ \ \forall s,t\geq0.$$
\end{lemma}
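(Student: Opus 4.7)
The plan is to dispose of the two inequalities separately: the first is an essentially immediate consequence of the definition of the Young conjugate, while the second is a standard Orlicz-space rescaling argument leveraging the doubling conditions $\Delta_2 \cap \nabla_2$.

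For the unweighted bound $st \leq G^*(s) + G(t)$, I would simply unpack the definition $G^*(s) = \sup_{r \geq 0}\{sr - G(r)\}$. Testing the supremum at the specific value $r = t \geq 0$ gives $st - G(t) \leq G^*(s)$, which rearranges to the claim. No hypothesis beyond $G$ being a Young function is needed here; the $N$-function assumption merely ensures that $G^*$ is itself a nontrivial $N$-function rather than identically zero or infinite.

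For the weighted inequality with a free parameter $\varepsilon > 0$, the plan is to rescale and then use the doubling behaviour of both $G$ and $G^*$. Writing $st = (\lambda s)(t/\lambda)$ for a parameter $\lambda \in (0,1)$ to be chosen, the unweighted inequality yields $st \leq G^*(\lambda s) + G(t/\lambda)$. It then suffices to show (i) $G^*(\lambda s) \leq \varepsilon\, G^*(s)$ once $\lambda$ is small enough in terms of $\varepsilon$, and (ii) $G(t/\lambda) \leq c(\varepsilon)\, G(t)$. The second bound follows from iterated application of the $\Delta_2$ condition $G(2t) \leq C G(t)$, which gives $G(2^k t) \leq C^k G(t)$ and hence $G(t/\lambda) \leq c(\lambda)\, G(t)$ once $1/\lambda$ is bounded by a power of $2$. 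The first bound uses the classical equivalence that $G \in \Delta_2$ is equivalent to $G^* \in \nabla_2$, so under the hypothesis $G \in \Delta_2 \cap \nabla_2$ both $G$ and $G^*$ enjoy doubling. The $\nabla_2$ condition on $G^*$ furnishes a $\theta > 1$ with $G^*(t) \leq G^*(\theta t)/(2\theta)$; iterating gives $G^*(s/\theta^k) \leq G^*(s)/(2\theta)^k$, so picking $\lambda = \theta^{-k}$ with $k$ large enough (depending on $\varepsilon$) makes the ratio $G^*(\lambda s)/G^*(s)$ as small as we wish.

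The only subtlety I anticipate is tracking the quantitative dependence of $c(\varepsilon)$ and verifying that the equivalences between the $\Delta_2/\nabla_2$ conditions for $G$ and $G^*$ are deployed in the right direction. Choosing $k = \lceil \log_{2\theta}(1/\varepsilon) \rceil$ and $\lambda = \theta^{-k}$ feeds both (i) and (ii) simultaneously, with $c(\varepsilon)$ depending only on $\varepsilon$, on the $\Delta_2$-constant of $G$ and on the $\nabla_2$-parameter $\theta$. These equivalences are classical in the theory of Orlicz spaces (see \cite{rr28}), so the whole argument should stay short and mechanical once the doubling parameters are fixed.
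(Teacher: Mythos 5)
Your argument is correct, and there is in fact no in-paper proof to compare against: the lemma is stated as a citation to \cite{yz1} and the paper offers no derivation of its own. Your first half is the one-line derivation of Young's inequality from the definition of the conjugate, $G^*(s)\geq st-G(t)$ by testing the supremum at $r=t$, which needs nothing beyond convexity of $G$. Your second half — rescaling $st=(\lambda s)(t/\lambda)$, then using the classical duality $G\in\Delta_2\Longleftrightarrow G^*\in\nabla_2$ to iterate the $\nabla_2$ bound $G^*(s/\theta^k)\leq G^*(s)/(2\theta)^k$ and the $\Delta_2$ bound $G(\theta^k t)\leq C^{\lceil k\log_2\theta\rceil}G(t)$ — is the standard route and is complete, with $\lambda=\theta^{-k}$ and $k=\lceil\log_{2\theta}(1/\varepsilon)\rceil$ giving a $c(\varepsilon)$ depending only on $\varepsilon$, the $\Delta_2$-constant of $G$ and the $\nabla_2$-parameter of $G^*$. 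One small observation: your argument never actually invokes $G\in\nabla_2$; $G\in\Delta_2$ alone produces both (i) (via conjugation) and (ii). The $\nabla_2$ hypothesis would be required for the symmetric variant $st\leq c(\varepsilon)G^*(s)+\varepsilon G(t)$, and since in this paper's setting Lemma \ref{gwan} guarantees $G\in\Delta_2\cap\nabla_2$ anyway, the stated hypothesis is harmless if slightly stronger than needed.
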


Another important property of Young's conjugate function is the following inequality, which  can be found in \cite{a1}:
\begin{equation}\label{a(x)4}
G^{*}\left( \frac{G(t)}{t}\right) \leqslant G(t).
\end{equation}
\begin{lemma}\cite{cm17,yz1}\label{gwan}
Under the assumption  \eqref{a(x)3}, G(t) is defined in \eqref{g}. Then we have

(1) $G(t)$ is strictly convex $N$-function and
$$G^{\ast}(g(t))\leqslant c G(t) \ \ \ for \ t\geqslant0 \ \ \ and \ \ some\ \  c>0;$$

(2) $G(t) \in\bigtriangledown _{2}.$
\end{lemma}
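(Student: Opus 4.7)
The statement is a classical structural result about Young functions generated by $g$ under \eqref{a(x)3}, and the plan is to reduce everything to two auxiliary estimates that follow from directly integrating the bounds $i_g \leqslant tg'(t)/g(t)\leqslant s_g$. I set these up first.

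First I would verify the basic $N$-function and strict convexity part of the statement. Because $g\in C^1(\mathbb{R}^+)$, $g(t)=0\Leftrightarrow t=0$, and $tg'(t)\geqslant i_g\, g(t)\geqslant g(t)>0$ for $t>0$, the derivative $G'(t)=g(t)$ is strictly increasing, so $G$ is strictly convex; together with $G(0)=0$ this makes $G$ a Young function. To check the $N$-function growth conditions \eqref{nhanshu}, I would integrate the two-sided bound on $tg'(t)/g(t)$ to get $g(t)\geqslant g(1)t^{i_g}$ for $t\geqslant1$ and $g(t)\leqslant g(1)t^{i_g}$ for $0\leqslant t\leqslant1$; together with $i_g\geqslant1$ this gives $G(t)/t\to\infty$ as $t\to\infty$ and $G(t)/t\to 0$ as $t\to 0$, i.e. $t/G(t)\to\infty$ as $t\to 0$.

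Next I would establish the key two-sided comparison
\begin{equation*}
(i_g+1)G(t)\leqslant tg(t)\leqslant (s_g+1)G(t),\qquad t\geqslant0,
\end{equation*}
by the elementary argument: $\frac{d}{dt}\bigl[tg(t)-(i_g+1)G(t)\bigr]=tg'(t)-i_g g(t)\geqslant 0$ and $\frac{d}{dt}\bigl[(s_g+1)G(t)-tg(t)\bigr]=s_g g(t)-tg'(t)\geqslant 0$, both vanishing at $t=0$. With this in hand, part~(1) follows instantly from the Legendre equality at dual points: since $G$ is a differentiable strictly convex $N$-function with $G'=g$, one has $G^{\ast}(g(t))=tg(t)-G(t)$; the upper estimate then gives $G^{\ast}(g(t))\leqslant s_g G(t)$, which is the desired inequality with $c=s_g$.

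For part~(2), I would use the lower comparison $(i_g+1)G(t)\leqslant tg(t)=tG'(t)$, i.e.\ $(\log G)'(t)\geqslant (i_g+1)/t$. Integrating from $t$ to $\theta t$ for any $\theta>1$ yields $G(\theta t)\geqslant \theta^{i_g+1}G(t)$. Choosing $\theta=2$ and using $i_g\geqslant 1$ gives $G(2t)\geqslant 4G(t)=2\theta\, G(t)$, so $G(t)\leqslant G(\theta t)/(2\theta)$, which is exactly the $\nabla_2$ condition. No step presents a real obstacle; the only mild subtlety is invoking the Legendre equality $G^\ast(G'(t))=tG'(t)-G(t)$, which is justified here by the strict convexity and $C^1$ character of $G$ already established in the first step.
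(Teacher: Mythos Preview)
Your argument is correct. The paper does not give its own proof of this lemma; it is simply cited from \cite{cm17,yz1}, so there is no in-paper proof to compare against. Your direct derivation---integrating the differential inequality $i_g\leqslant tg'(t)/g(t)\leqslant s_g$ to obtain the two-sided bound $(i_g+1)G(t)\leqslant tg(t)\leqslant (s_g+1)G(t)$, then invoking the Legendre identity $G^{\ast}(g(t))=tg(t)-G(t)$ for part~(1) and the growth inequality $G(\theta t)\geqslant \theta^{i_g+1}G(t)$ for part~(2)---is the standard and complete route, and each step is justified by the hypotheses in \eqref{a(x)3}.
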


\begin{lemma}\cite{cho1,de1}\label{adaog}
Under the assumptions  \eqref{a(x)1} and \eqref{a(x)3},  G(t) is defined in \eqref{g}. Then there exists $c=c(data)>0 $ such that
\begin{equation}
[a(x,\eta)-a(x,\xi)]\cdot(\eta-\xi)\geq cG(|\eta-\xi|), \ \ \ \ for \ \ every \ \ x \in \Omega,\eta,\xi , \in \mathbb{R}^n.
\end{equation}
Especially, we have
\begin{equation}
a(x,\eta) \cdot \eta \geq cG(|\eta|), \ \ \ \ \ for \ \ every \ \ x \in \Omega,\eta \in \mathbb{R}^n.
\end{equation}
\end{lemma}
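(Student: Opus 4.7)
The plan is to reduce the monotonicity inequality to a scalar calculus estimate by writing
\[
[a(x,\eta)-a(x,\xi)]\cdot(\eta-\xi)=\int_0^1\frac{d}{dt}\bigl[a(x,z_t)\bigr]\cdot(\eta-\xi)\,dt=\int_0^1 D_\eta a(x,z_t)(\eta-\xi)\cdot(\eta-\xi)\,dt,
\]
where $z_t:=\xi+t(\eta-\xi)$. Applying the first inequality in \eqref{a(x)1} under the integral bounds this from below by $l\,|\eta-\xi|^2\int_0^1 g(|z_t|)/|z_t|\,dt$. The entire task therefore reduces to proving the one-dimensional bound
\[
\mathcal{I}:=\int_0^1\frac{g(|z_t|)}{|z_t|}\,dt\;\geqslant\;c\,\frac{G(|\eta-\xi|)}{|\eta-\xi|^2}.
\]

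To handle $\mathcal{I}$, I would first collect three consequences of \eqref{a(x)3}: (i) since $i_g\geqslant 1$, the map $t\mapsto g(t)/t$ is nondecreasing; (ii) since $s_g<\infty$, $g$ satisfies a doubling (i.e.\ $\vartriangle_2$) property $g(\lambda t)\leqslant\lambda^{s_g}g(t)$ for $\lambda\geqslant 1$; (iii) integrating $i_g\,g(t)\leqslant tg'(t)\leqslant s_g g(t)$ gives $(1+i_g)G(t)\leqslant t\,g(t)\leqslant(1+s_g)G(t)$. By symmetry we may assume $|\eta|\geqslant|\xi|$, and split into cases. If $|\xi|\leqslant|\eta-\xi|/4$, then for $t\in[1/2,1]$ one has $|z_t|\geqslant t|\eta-\xi|-|\xi|\geqslant|\eta-\xi|/4$, and monotonicity of $g(s)/s$ together with (ii) yields $g(|z_t|)/|z_t|\gtrsim g(|\eta-\xi|)/|\eta-\xi|$; together with (iii) this gives the desired bound. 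In the remaining case $|\xi|>|\eta-\xi|/4$ (so $|\eta|\sim|\xi|\sim|\eta-\xi|$ can still happen via near-antiparallelism), I would parametrize along the segment $z_t$, change variables to $s=|z_t|$, and use the explicit lower bound $\int_0^r g(s)/s\,ds\geqslant G(r)/r\cdot c$, which follows from $g(s)/s\geqslant g(s)/r$ on $[0,r]$. Patching the two cases gives $\mathcal{I}\gtrsim G(|\eta-\xi|)/|\eta-\xi|^2$, and hence the first claimed inequality.

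The second inequality, $a(x,\eta)\cdot\eta\geqslant cG(|\eta|)$, is then immediate upon specializing $\xi=0$, because the second line of \eqref{a(x)1} evaluated at $\eta=0$ forces $|a(x,0)|\leqslant L\,g(0)=0$, so $a(x,0)=0$ and the first inequality directly yields the pointwise coercivity.

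The main obstacle is the scalar lower bound for $\mathcal{I}$: the segment $z_t$ may pass arbitrarily close to the origin when $\eta$ and $\xi$ nearly cancel, so one cannot simply lower-bound $|z_t|$ uniformly. The case split above, combined with the doubling and integration-by-parts relations $tg(t)\sim G(t)$, is exactly what is needed to absorb this degenerate configuration; this is a classical Orlicz refinement of the familiar $p$-Laplacian monotonicity, and a fully detailed verification can be found in the references \cite{cho1,de1} cited in the statement.
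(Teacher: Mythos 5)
The paper does not give its own proof of this lemma; it simply cites \cite{cho1,de1}, so there is no internal argument to compare against. Your approach—integrating the ellipticity bound on $D_\eta a$ along the segment $z_t=\xi+t(\eta-\xi)$ and then reducing to the scalar estimate $\int_0^1 g(|z_t|)/|z_t|\,dt\gtrsim G(|\eta-\xi|)/|\eta-\xi|^2$—is exactly the standard device used in the cited literature, and your derivation of the auxiliary facts (monotonicity of $g(t)/t$ from $i_g\geqslant1$, doubling from $s_g<\infty$, and $(1+i_g)G(t)\leqslant tg(t)\leqslant(1+s_g)G(t)$ by integration by parts) is correct, as is the observation $a(x,0)=0$ from the growth bound at $\eta=0$.

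There is, however, a small gap in your treatment of the second case $|\xi|>|\eta-\xi|/4$. After changing variables $s=|z_t|$ and using $\bigl|\tfrac{d}{dt}|z_t|\bigr|\leqslant|\eta-\xi|$, what you actually get is a bound involving $\int_{m}^{|\eta|}g(s)/s\,ds$ (and/or $\int_m^{|\xi|}$), where $m=\min_{t\in[0,1]}|z_t|$ — the lower limit is $m$, not $0$. So the inequality $\int_0^r g(s)/s\,ds\geqslant G(r)/r$ that you invoke is not directly applicable when $m>0$, and when $m$ is comparable to $|\eta|$ that integral can be far smaller than $G(|\eta-\xi|)/|\eta-\xi|$. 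The cleanest repair also removes the need for the case split altogether: if $t_*$ denotes the parameter of the point on the line through $\xi$ and $\eta$ nearest the origin, then $|z_t|\geqslant|t-t_*|\,|\eta-\xi|$, so whichever of $[0,\tfrac14]$ or $[\tfrac34,1]$ is farther from $t_*$ is a $t$-interval of length $\tfrac14$ on which $|z_t|\geqslant\tfrac14|\eta-\xi|$. Your first-case argument (monotonicity of $g(s)/s$ plus doubling of $g$, followed by $g(r)/r\geqslant(1+i_g)G(r)/r^2$) then applies verbatim on that subinterval, giving $\mathcal{I}\geqslant c\,G(|\eta-\xi|)/|\eta-\xi|^2$ in all configurations. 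With this fix the sketch is sound; the only remaining technicality is that the fundamental-theorem representation along the segment requires $a(x,\cdot)$ to be absolutely continuous there, which follows from the Lipschitz bound in \eqref{a(x)1} via a standard approximation when the segment passes through the origin.
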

The following iteration lemma turns out to be very useful in the sequel.
\begin{lemma}\cite{g11} \label{diedai}
 Let $f(t)$ be a nonnegative function defined on the interval $[a,b]$ with $a \geqslant 0$. Suppose that for $s,t \in [a,b]$ with $t<s$,
 \begin{equation*}
 f(t)\leqslant \frac{A}{(s-t)^{\alpha}}+\frac{B}{(s-t)^{\beta}}+C+\theta f(s)
 \end{equation*}
 holds, where $A,B,C\geqslant0, \alpha,\beta>0$ and $0\leqslant\theta<1$. Then there exists a constant $c=c(\alpha,\theta)$ such that
  \begin{equation*}
 f(\rho)\leqslant c\left( \frac{A}{(R-\rho)^{\alpha}}+\frac{B}{(R-\rho)^{\beta}}+C\right)
 \end{equation*}
 for any $\rho,R \in [a,b]$ with $\rho<R$.
\end{lemma}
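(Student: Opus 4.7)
The plan is to carry out the classical hole-filling geometric iteration (Giaquinta's style). Fix $\rho < R$ in $[a,b]$, pick a parameter $\tau \in (0,1)$ to be chosen later, and define an increasing sequence $t_0 := \rho$ and $t_{i+1} := t_i + (1-\tau)\tau^{i}(R-\rho)$. Then $t_i \uparrow R$ and the consecutive gaps satisfy $t_{i+1}-t_i = (1-\tau)\tau^{i}(R-\rho)$. Since each pair $(t_i, t_{i+1})$ lies in $[\rho, R]\subseteq [a,b]$, the assumed inequality applied to it gives
\begin{equation*}
f(t_i) \leq \frac{A}{(1-\tau)^{\alpha}(R-\rho)^{\alpha}}\,\tau^{-i\alpha}+\frac{B}{(1-\tau)^{\beta}(R-\rho)^{\beta}}\,\tau^{-i\beta}+C+\theta f(t_{i+1}).
\end{equation*}

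Iterating this inequality $k$ times, multiplying successively by powers of $\theta$, and telescoping produces
\begin{equation*}
f(\rho)=f(t_0)\leq \sum_{i=0}^{k-1}\theta^{i}\!\left[\frac{A\,\tau^{-i\alpha}}{(1-\tau)^{\alpha}(R-\rho)^{\alpha}}+\frac{B\,\tau^{-i\beta}}{(1-\tau)^{\beta}(R-\rho)^{\beta}}+C\right]+\theta^{k}f(t_k).
\end{equation*}
The decisive choice is to pick $\tau$ close enough to $1$ that simultaneously $\theta\tau^{-\alpha}<1$ and $\theta\tau^{-\beta}<1$; an explicit admissible value is $\tau:=\bigl((1+\theta)/2\bigr)^{1/\max\{\alpha,\beta\}}$, for which $\tau^{\max\{\alpha,\beta\}}=(1+\theta)/2>\theta$. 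With this selection, both geometric series $\sum_{i\geq 0}(\theta\tau^{-\alpha})^{i}$ and $\sum_{i\geq 0}(\theta\tau^{-\beta})^{i}$ converge, as does $\sum_{i\geq 0}\theta^{i}=1/(1-\theta)$. Absorbing the prefactors $(1-\tau)^{-\alpha}$, $(1-\tau)^{-\beta}$ and the series values into a single constant $c=c(\alpha,\beta,\theta)$ yields the claimed bound on $f(\rho)$, modulo the residual $\theta^{k}f(t_k)$.

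The one subtlety, and really the only nontrivial step, is disposing of the tail $\theta^{k}f(t_k)$ as $k\to\infty$. Since $\theta<1$ and $t_k\in[\rho,R]$, we have $\theta^{k}f(t_k)\leq \theta^{k}\sup_{[\rho,R]}f\to 0$ provided $f$ is bounded on $[\rho,R]$; this local boundedness is the standard tacit hypothesis for iteration lemmas of this form and is always verified in the applications made later in the paper. If one wishes to dispense with it, one first performs the iteration between two intermediate radii $\rho<\rho'<R'<R$ where $\sup f$ is finite, uses the inequality just derived there, and then takes limits $\rho'\downarrow\rho$, $R'\uparrow R$. Once the tail is handled, passing to the limit $k\to\infty$ in the displayed sum gives the desired estimate with $c=c(\alpha,\beta,\theta)$, completing the argument.
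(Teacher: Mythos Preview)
The paper does not supply its own proof of this lemma; it is simply quoted from Giusti's book \cite{g11}. Your argument is precisely the standard geometric iteration found there and is correct, including the explicit choice of $\tau$ and the treatment of the tail term $\theta^{k}f(t_k)$ via the (tacit, but standard and explicit in Giusti's formulation) local boundedness of $f$. One minor remark: the constant you produce necessarily depends on $\beta$ as well as on $\alpha$ and $\theta$; the dependence $c=c(\alpha,\theta)$ in the paper's statement is a small oversight.
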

The proof of the following lemma can be found in \cite{xiong1}.
\begin{lemma}\label{qianqi}
Let $\Omega \subset \mathbb{R}$ be a bounded domain. Assume that $1+i_g\leqslant n$ , $h \in \mathcal{T}^{1,G}_{0}(\Omega)$ satisfies
\begin{equation*}
\int_{\Omega \cap \left\lbrace |h|\leqslant k\right\rbrace }|Dh|^{1+i_g}dx\leqslant Mk+M^{\frac{1+i_g}{i_g}}
\end{equation*}
 for $\forall \  k>0$,  and fixed constants  $M>0$.
Then we have
$$\int_{\Omega}|h|^{1+\alpha}dx\leqslant c_{1}M^{\frac{1+\alpha}{i_g}},$$
$$\int_{\Omega}|Dh|^{1+\beta}dx\leqslant c_{2}M^{\frac{1+\beta}{i_g}},$$
where $0<\alpha <min\left\lbrace 1,\frac{n(i_g-1)+1+i_g}{n-1-i_g}\right\rbrace, 0<\beta <min\left\lbrace 1,\frac{n(i_g-1)+1}{n-1}\right\rbrace, c_1=c_1(\Omega,n,i_g,\alpha),$ $c_2=c_2(\Omega,n,i_g,\beta).$
\end{lemma}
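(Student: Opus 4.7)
The plan is to follow the Boccardo--Gallou\"et truncation argument, adapted to the natural Sobolev exponent $p=1+i_g$. The hypothesis is exactly a level-set energy bound for the truncations $T_k(h)$, which, by the definition of $\mathcal{T}^{1,G}_{0}(\Omega)$ combined with the lower growth $G(t)\geq c\,t^{1+i_g}$ coming from $i_g\geq 1$, lie in $W^{1,1+i_g}_{0}(\Omega)$. The strategy is therefore to convert the assumption first into a decay estimate for the distribution function $\lambda(k):=|\{x\in\Omega:|h(x)|>k\}|$, and only then recover the two integral bounds.

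First I would apply the Sobolev embedding $W^{1,1+i_g}_{0}(\Omega)\hookrightarrow L^{(1+i_g)^{\ast}}(\Omega)$, available because the range of admissible $\alpha$ forces $1+i_g<n$, with $(1+i_g)^{\ast}=\frac{n(1+i_g)}{n-1-i_g}$, to $T_k(h)$. Using the trivial pointwise lower bound $|T_k(h)|\geq k$ on $\{|h|>k\}$ together with the hypothesis yields
\begin{equation*}
k^{1+i_g}\,\lambda(k)^{(1+i_g)/(1+i_g)^{\ast}}\leq c\int_{\Omega}|DT_k(h)|^{1+i_g}\,dx\leq c\bigl(Mk+M^{(1+i_g)/i_g}\bigr).
\end{equation*}
For $k\geq M^{1/i_g}$ the two terms on the right are comparable, whence
\begin{equation*}
\lambda(k)\leq c\,M^{n/(n-1-i_g)}\,k^{-i_g n/(n-1-i_g)},
\end{equation*}
while for smaller $k$ the trivial bound $\lambda(k)\leq|\Omega|$ will be used.

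For the estimate on $h$, Cavalieri's formula $\int_{\Omega}|h|^{1+\alpha}\,dx=(1+\alpha)\int_{0}^{\infty}k^{\alpha}\lambda(k)\,dk$ is split at $k_0:=M^{1/i_g}$: the piece on $[0,k_0]$ is bounded by $|\Omega|M^{(1+\alpha)/i_g}$, and the tail converges precisely when $\alpha<\frac{n(i_g-1)+1+i_g}{n-1-i_g}$, producing a bound of the same order $M^{(1+\alpha)/i_g}$. For the estimate on $Dh$, I would decompose dyadically along level sets,
\begin{equation*}
\int_{\Omega}|Dh|^{1+\beta}\,dx=\int_{\{|h|\leq k_0\}}|Dh|^{1+\beta}\,dx+\sum_{j\geq 0}\int_{\{2^j k_0<|h|\leq 2^{j+1}k_0\}}|Dh|^{1+\beta}\,dx,
\end{equation*}
and apply H\"older on each annulus with conjugate exponents $\frac{1+i_g}{1+\beta}$ and $\frac{1+i_g}{i_g-\beta}$, estimating the gradient factor by the hypothesis at level $k=2^{j+1}k_0$ and the measure factor by $\lambda(2^j k_0)$ from the first step.

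The main obstacle is the exponent bookkeeping in the dyadic sum: one must verify that the net power of $2^j$ in each summand equals $\frac{1+\beta}{1+i_g}-\frac{n\,i_g(i_g-\beta)}{(1+i_g)(n-1-i_g)}$, which is strictly negative exactly in the range $\beta<\frac{n(i_g-1)+1}{n-1}$, and that the remaining $M$-power and prefactor assemble into the claimed bound $c\,M^{(1+\beta)/i_g}$ with a constant depending only on $\Omega$, $n$, $i_g$ and $\beta$. Every other ingredient is either the Sobolev embedding or an elementary layer-cake manipulation.
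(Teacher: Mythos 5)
Your argument is the standard Boccardo--Gallou\"et truncation scheme, and the exponent bookkeeping you lay out is correct: the threshold $k_0=M^{1/i_g}$ is the right split, the tail of the layer-cake integral converges precisely for $\alpha<\frac{n(i_g-1)+1+i_g}{n-1-i_g}$, and the dyadic sum for the gradient closes precisely for $\beta<\frac{n(i_g-1)+1}{n-1}$, with all the $M$-powers assembling to $M^{(1+\alpha)/i_g}$ and $M^{(1+\beta)/i_g}$ respectively. The paper itself does not reproduce a proof (it cites the authors' earlier work), but this is exactly the argument one expects behind such a statement, so the route is essentially the same.

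There is one genuine gap. You justify the Sobolev embedding $W^{1,1+i_g}_0(\Omega)\hookrightarrow L^{(1+i_g)^{\ast}}(\Omega)$ by asserting that the admissible range of $\alpha$ forces $1+i_g<n$; that is not what the lemma says. The hypothesis is $1+i_g\leq n$, and when $1+i_g=n$ the quantity $\frac{n(i_g-1)+1+i_g}{n-1-i_g}$ has a vanishing denominator and positive numerator, so the constraint degenerates to $\alpha<\min\{1,+\infty\}=1$ (and similarly $\beta<1$): the borderline case is permitted and must be handled. There the critical exponent $(1+i_g)^{\ast}$ is not defined, but the fix is minor: use $W^{1,n}_0(\Omega)\hookrightarrow L^{q}(\Omega)$ for any finite $q$, giving $\lambda(k)\leq C_q\,M^{q/n}k^{-q(n-1)/n}$ for $k\geq k_0$; choosing $q>\frac{n(1+\alpha)}{n-1}$ makes the tail converge, and the $M$-exponent $\frac{q}{n}+\frac{1}{i_g}\bigl(\alpha+1-\frac{q(n-1)}{n}\bigr)$ collapses to $\frac{1+\alpha}{i_g}$ independently of $q$, so the conclusion survives. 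You should add this branch (or restrict the claim you are proving to $1+i_g<n$ and prove the equality case separately) rather than asserting that the case cannot occur.
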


 \begin{lemma}\label{ag}
Assume that $g(t)$ satisfies   \eqref{a(x)3}, $G(t)$ is defined in \eqref{g}. Then we have

(1) for any  $\beta \geq1$,

 \ \ \ \ \ \ \ \ \ $\beta^{i_g}\leq \dfrac{g(\beta t)}{g(t)}\leq \beta^{s_g}$ \ \ \ and \ \ \  $\beta^{1+i_g}\leq \dfrac{G(\beta t)}{G(t)} \leq \beta^{1+s_g}$,  \ \ \  for every $t>0$,

for any  $0<\beta<1$,

 \ \ \ \ \ \ \ \ \ $\beta^{s_g}\leq \dfrac{g(\beta t)}{g(t)}\leq \beta^{i_g}$ \ \ \ and \ \ \
$\beta^{1+s_g}\leq \dfrac{G(\beta t)}{G(t)} \leq \beta^{1+i_g}$, \ \ \ for every $t>0$.

(2) for any  $\beta \geq1$,

 \ \ \ \ \ \ \ \ \ $\beta^{\frac{1}{s_g}}\leq \dfrac{g^{-1}(\beta t)}{g^{-1}(t)}\leq \beta^{\frac{1}{i_g}}$ \ \ \ and \ \ \  $\beta^{\frac{1}{1+s_g}}\leq \dfrac{G^{-1}(\beta t)}{G^{-1}(t)} \leq \beta^{\frac{1}{1+i_g}}$,  \ \ \  for every $t>0$,

for any  $0<\beta<1$,

 \ \ \ \ \ \ \ \ \ $\beta^{\frac{1}{i_g}}\leq \dfrac{g^{-1}(\beta t)}{g^{-1}(t)}\leq \beta^{\frac{1}{s_g}}$ \ \ \ and \ \ \
$\beta^{\frac{1}{1+i_g}}\leq \dfrac{G^{-1}(\beta t)}{G^{-1}(t)} \leq \beta^{\frac{1}{1+s_g}}$, \ \ \ for every $t>0$.
\end{lemma}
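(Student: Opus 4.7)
The plan is to treat part (1) first by exploiting the indicial bounds $i_g \leq tg'(t)/g(t) \leq s_g$ from the third line of \eqref{a(x)3}, and then derive part (2) by inverting.

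First I would handle the bounds on $g(\beta t)/g(t)$. For fixed $t>0$ and $\beta\geq 1$, I would write
$$\log \frac{g(\beta t)}{g(t)} \;=\; \int_{1}^{\beta}\frac{d}{dr}\log g(rt)\,dr \;=\; \int_{1}^{\beta}\frac{rt\,g'(rt)}{g(rt)}\,\frac{dr}{r}.$$
Using the pointwise bound $i_g\leq \tau g'(\tau)/g(\tau)\leq s_g$ with $\tau=rt$, the integrand is squeezed between $i_g/r$ and $s_g/r$, so integration from $1$ to $\beta$ yields $i_g\log\beta \leq \log(g(\beta t)/g(t)) \leq s_g\log\beta$. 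Exponentiating gives the two-sided bound for $\beta\geq 1$. The case $0<\beta<1$ then follows by applying the $\beta\geq 1$ case with $\beta^{-1}$ in place of $\beta$ and $\beta t$ in place of $t$, then inverting the resulting inequality.

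Next, for the $G$ bounds, I would first establish the sandwich $\frac{tg(t)}{1+s_g}\leq G(t)\leq \frac{tg(t)}{1+i_g}$. To do so, I would write $G(t)=\int_0^t g(\tau)\,d\tau$ and use the already-proved bound $(\tau/t)^{s_g}g(t)\leq g(\tau)\leq (\tau/t)^{i_g}g(t)$ for $0<\tau\leq t$, then integrate in $\tau$. This sandwich yields $1+i_g \leq tG'(t)/G(t)\leq 1+s_g$, since $G'=g$, after which the same logarithmic-differentiation argument used for $g$ produces the claimed bounds for $G(\beta t)/G(t)$ with exponents $1+i_g$ and $1+s_g$; the $0<\beta<1$ case is again obtained by rewriting $t=\beta^{-1}(\beta t)$.

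For part (2), the plan is to exploit that $g$ and $G$ are strictly increasing bijections of $[0,\infty)$ (from \eqref{a(x)3} and Lemma \ref{gwan}), so $g^{-1}$ and $G^{-1}$ are well defined. Fix $\beta\geq 1$ and $t>0$, and set $u:=g^{-1}(\beta t)/g^{-1}(t)$; then $g(u\,g^{-1}(t))=\beta t=\beta g(g^{-1}(t))$, so $\beta=g(u s)/g(s)$ with $s:=g^{-1}(t)$. Since $\beta\geq 1$ and $g$ is increasing, one has $u\geq 1$, and part (1) forces $u^{i_g}\leq \beta\leq u^{s_g}$, which rearranges to $\beta^{1/s_g}\leq u\leq \beta^{1/i_g}$. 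The case $0<\beta<1$ is handled identically (now $u\leq 1$, and the reverse half of part (1) applies), and the $G^{-1}$ estimates are obtained by the same argument with the exponents $1+i_g$ and $1+s_g$ from the $G$-part of (1) in place of $i_g$, $s_g$. I do not expect any real obstacle: the whole lemma is a routine consequence of the indicial bounds combined with the fundamental theorem of calculus and strict monotonicity of the inverse; the only minor point of care is checking in each case whether $u\gtreqless 1$ before invoking the appropriate half of part (1).
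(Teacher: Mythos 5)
Your proposal is correct, and the paper does not actually print a proof of Lemma \ref{ag} (it is stated as a standard consequence of the indicial bounds in \eqref{a(x)3}, with no citation or argument attached); your route is the canonical one. The logarithmic-differentiation step $\log\bigl(g(\beta t)/g(t)\bigr)=\int_1^\beta \frac{rtg'(rt)}{g(rt)}\,\frac{dr}{r}$ sandwiched by $i_g/r$ and $s_g/r$, the derived inequality $\frac{tg(t)}{1+s_g}\le G(t)\le \frac{tg(t)}{1+i_g}$ giving $1+i_g\le tG'(t)/G(t)\le 1+s_g$, and the inversion argument with $u=g^{-1}(\beta t)/g^{-1}(t)$ all check out, including the case analysis on whether $u\gtrless 1$; no gaps.
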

It's obvious that  Lemma \ref{ag} implies that
\begin{equation}\label{lg}
L^{1+s_g}(\Omega)\subset L^{G}(\Omega) \subset L^{1+i_g}(\Omega) \subset L^1(\Omega),
\end{equation}
and $g$ and $G$ satisfy $\vartriangle_2$ condition. Then from Remark   \ref{remark1}, we know that  $g$ and $G$ satisfy   the subadditivity property: $g(t+s)\leqslant c[g(t)+g(s)]$,  $G(t+s)\leqslant c[G(t)+G(s)], $ for every $t,s \geqslant0$.
\section{Comparison estimates }\label{section3}
This section is devoted to comparing the solutions of double obstacle problems to the solutions of elliptic equations. Therefore we can obtain a similar excess decay estimate for solutions of double obstacle problems with measure data.
Firstly, we shall prove the comparison estimate between the solutions of double obstacle problems with measure data and the solutions of the single obstacle problems.

\begin{lemma}\label{bijiao}
Assume that conditions  \eqref{a(x)1}-\eqref{a(x)3} are fulfilled, let $B_{2R}(x_0)\subset \Omega,  f \in L^{1}(B_R(x_0))\cap (W^{1,G}(B_R(x_0)))'$ and the map $u\in W^{1,G}(B_R(x_0))$ with $ \psi_1 \ls u \ls \psi_2$ solves the variational inequality
\begin{equation}\label{u0}
\int_{B_R(x_0)} a(x,Du)\cdot D(v-u)dx \geq \int_{B_R(x_0)} f(v-u) dx
\end{equation}
for any  $v \in u+W_0^{1,G}(B_R(x_0))$ that  satisfy $\psi_1 \ls v\ls \psi_2 $ a.e.  in $B_R(x_0)$.
Let $w_1 \in u+W_{0}^{1,G}(B_R(x_0))$ with $w_1 \geq \psi_1 $ be the weak solution of the single obstacle problem
\begin{equation}\label{w-v}
\int_{B_R(x_0)} a(x,Dw_1)\cdot D(v-w_1)dx \geq \int_{B_R(x_0)} a(x,D\psi_2)\cdot D(v-w_1)dx
\end{equation}
for any  $v \in w_1+W_0^{1,G}(B_R(x_0))$ that  satisfy $ v \geqslant  \psi_1 $ a.e.  in $B_R(x_0)$.
Then there exists $c=c(data)$ such that
\begin{equation}
\fint_{B_R(x_0)} |Du-Dw_1| \operatorname{d}x\leqslant c\left[R \fint_{B_R(x_0)}|f|+|\ddiv a(x,D\psi_2)|dx\right] ^{\frac{1}{i_g}}.
\end{equation}
\end{lemma}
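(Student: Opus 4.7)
The plan is to compare the double-obstacle solution $u$ with the single-obstacle auxiliary $w_1$ by means of two mirror truncation tests, producing an Orlicz energy bound, and then invoking Lemma \ref{qianqi} to pass from the energy to the target $L^{1}$ gradient estimate.

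A preliminary step is to show that $w_1 \le \psi_2$ a.e.\ in $B_R(x_0)$. Since $w_1=u$ on $\partial B_R(x_0)$ and $u\le\psi_2$, the function $(w_1-\psi_2)^+$ lies in $W_0^{1,G}(B_R(x_0))$, so $v:=\min(w_1,\psi_2)$ satisfies $v\ge\psi_1$ and is admissible in \eqref{w-v}. The resulting inequality
$$\int_{B_R(x_0)}[a(x,Dw_1)-a(x,D\psi_2)]\cdot D(w_1-\psi_2)^+\,dx\le 0,$$
combined with the strict monotonicity in Lemma \ref{adaog}, forces $D(w_1-\psi_2)^+=0$ a.e., and hence $w_1\le\psi_2$.

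With $w_1\le\psi_2$ available, I would use the symmetric truncation tests $v_1:=u+T_k(w_1-u)$ in \eqref{u0} and $v_2:=w_1+T_k(u-w_1)$ in \eqref{w-v}. Inspecting the three branches of $T_k$ one verifies $\psi_1\le v_1\le\psi_2$ (using both $u\ge\psi_1$ and the just-proved $w_1\le\psi_2$) and $v_2\ge\psi_1$ (on $\{w_1-u>k\}$ one has $w_1-k>u\ge\psi_1$). Since $T_k(w_1-u)=-T_k(u-w_1)$, adding the two resulting inequalities and integrating by parts the term $\int a(x,D\psi_2)\cdot D T_k(u-w_1)\,dx$ (legitimate because $T_k(u-w_1)\in W_0^{1,G}(B_R(x_0))$ and $\ddiv a(x,D\psi_2)\in L^1$) yields
$$\int_{\{|u-w_1|\le k\}}[a(x,Du)-a(x,Dw_1)]\cdot D(u-w_1)\,dx \le k\int_{B_R(x_0)}F\,dx,\qquad F:=|f|+|\ddiv a(x,D\psi_2)|.$$
By the strict monotonicity of Lemma \ref{adaog} and the two-sided Orlicz comparisons in Lemma \ref{ag}, this transforms into a Boccardo--Gallou\"et type bound
$$\int_{\{|u-w_1|\le k\}}|D(u-w_1)|^{1+i_g}\,dx \le Mk+M^{(1+i_g)/i_g}$$
with $M$ calibrated against $\int_{B_R(x_0)}F\,dx$, the contribution from the sub-unit set $\{|D(u-w_1)|<1\}$ being absorbed into the term $M^{(1+i_g)/i_g}$.

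Because $u-w_1\in W_0^{1,G}(B_R(x_0))\subset\mathcal{T}^{1,G}_0(B_R(x_0))$, Lemma \ref{qianqi} then produces $\int |D(u-w_1)|^{1+\beta}\,dx\le cM^{(1+\beta)/i_g}$ for admissible $\beta>0$, after which a H\"older inequality combined with the correct scaling of $M$ against $R$ and $|B_R(x_0)|$ delivers the desired estimate $\fint_{B_R(x_0)}|Du-Dw_1|\,dx\le c[R\fint_{B_R(x_0)}F\,dx]^{1/i_g}$. The main obstacle I anticipate is precisely this last step: on $\{|D(u-w_1)|\le 1\}$ the Orlicz function $G$ is only comparable to $t^{1+s_g}$ rather than $t^{1+i_g}$, so the balancing of the two summands $Mk$ and $M^{(1+i_g)/i_g}$ in the Boccardo--Gallou\"et bound, together with the admissible range of $\beta$ in Lemma \ref{qianqi}, must be arranged with care so that the final scaling comes out exactly as $[R\fint F]^{1/i_g}$.
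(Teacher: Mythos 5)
Your preliminary step ($w_1\leqslant\psi_2$ a.e.\ via $v=\min\{w_1,\psi_2\}$) and your choice of truncation tests $u+T_k(w_1-u)$ and $w_1+T_k(u-w_1)$ are exactly what the paper does, and the verification of admissibility (including the role of the just-proved $w_1\leqslant\psi_2$) is correct. However, there are two genuine gaps in the remainder of your argument.

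First, you only treat the regime where Lemma~\ref{qianqi} applies, namely $1+i_g\leqslant n$. When $1+i_g>n$, that lemma is unavailable; the paper covers this separately by observing that $u-w_1\in W_0^{1,1+i_g}(B_1)\hookrightarrow L^\infty(B_1)$ (Sobolev embedding in the supercritical range), then testing both inequalities with $v=\tfrac{u+w_1}{2}$ and absorbing the $\|u-w_1\|_{L^\infty}$ factor. You need an argument in this range too. Second, and more importantly, your ``calibration of $M$ against $\int F$'' does not close on its own. After the truncation test one gets $\int_{D_k}G(|D(u-w_1)|)\,dx\leqslant ck\int F$, and passing to $|D(u-w_1)|^{1+i_g}$ on the sub-unit set produces an additive term of size $|B_R(x_0)|$, not $(\int F)^{(1+i_g)/i_g}$. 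These two quantities have no a priori relation, so the Boccardo--Gallou\"et input to Lemma~\ref{qianqi} is not of the required form $Mk+M^{(1+i_g)/i_g}$ for a single constant $M$. The paper resolves this by first scaling to $x_0=0$, $R=1$, then splitting into the dichotomy $\int_{B_1}F\,dx\leqslant 1$ (where $M$ is an absolute constant and the additive $|B_1|$ is harmless) versus $\int_{B_1}F\,dx>1$, in which case one rescales $u,w_1,f,a,\psi_i$ by $A=(\int_{B_1}F)^{1/i_g}$ so as to reduce to the first case, and then undoes the rescaling. This dichotomy-plus-rescaling is precisely what produces the exponent $1/i_g$ and the factor $R$ in the conclusion; it is the missing ingredient you flag as ``must be arranged with care'' but do not supply.
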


\begin{proof}
Without loss of generality we may assume that $x_0=0, R=1$ by defining

$$\widehat{u}(x)=\dfrac{u(Rx+x_0)}{R}, \ \ \ \widehat{w_1}(x)=\dfrac{w_1(Rx+x_0)}{R}, \ \ \ \widehat{f}(x)=Rf(Rx+x_0), \ \ \ $$ $$\widehat{\psi_2}(x)=\dfrac{\psi_2(Rx+x_0)}{R}, \ \ \ \widehat{a}(a,\eta)=a(Rx+x_0,\eta).$$
Since $w_1 \in u+W_0^{1,G}(B_1), u \ls \psi_2$ a.e. in $B_1$, we have $(w_1-\psi_2)_+ \in W_0^{1,G}(B_1)$. Then we take $v=\min \{w_1,\psi_2\}=w_1-(w_1-\psi_2)_+ \in  w_1+W_0^{1,G}(B_1)$ with $v \rs \psi_1$ as comparison functions in \eqref{w-v},   it follows from  Lemma \ref{adaog}  that
\begin{eqnarray*}
\int_{B_1}G(|D(w_1-\psi_2)_+|)dx &\leqslant& c \int_{B_1}[a(x,Dw_1)-a(x,D\psi_2)] \cdot D[(w_1-\psi_2)_+]dx \\
&\leqslant& 0.
\end{eqnarray*}
Because $G(\cdot)$ is increasing over $[0,+\infty)$ and $G(0)=0$, we can infer that
$$D(w_1-\psi_2)_+=0 \ \ a.e. \ \ \ in \ \ B_1.$$
Combining with $(w_1-\psi_2)_+=0\ \ \  on \ \ \partial B_1$, which implies
$$(w_1-\psi_2)_+=0 \ \ \ a.e.\ \ in \ \ B_1.$$
It means that
$w_1\ls \psi_2$ a.e. in $B_1$.

Case 1: $\int_{B_1}|f|+|\ddiv a(x,D\psi_2)|dx\leqslant 1$. If $1+i_g>n$, because of $u-w_1\in W_{0}^{1,G}(B_1)$, $u-w_1\in W_{0}^{1,1+i_g}(B_1)$, then we make use of Sobolev's inequality to get $u-w_1\in L^{\infty}(B_{1})$. Now we take $v=\frac{u+w_1}{2}\in u+W_{0}^{1,G}(B_1)$ as comparison functions in the variational inequalities \eqref{u0} and \eqref{w-v}, which implies that
\begin{eqnarray*}
\int_{B_1}|Du-Dw_1|^{1+i_g}dx &\leq & c\int_{B_1}[G(|Du-Dw_1|)+1]dx    \nonumber \\
&\leq & c\int_{B_1}[a(x,Dw_1)-a(x,Du)] \cdot (Dw_1-Du)dx +c \nonumber  \\
&\leq & c\int_{B_1}\left[ |f|+|\ddiv a(x,D\psi_2)|\right] |w_1-u|dx+c    \nonumber  \\
&\leq & c\parallel u-w_1\parallel_{L^{\infty}(B_1)}\int_{B_1}|f|+|\ddiv a(x,D\psi_2)|dx+c \\
&\leq & c \parallel Du-Dw_1\parallel_{L^{1+i_g}(B_1)}+c,
\end{eqnarray*}
where we used Lemma \ref{ag} and Lemma \ref{adaog}.
Then we have
\begin{equation*}
\int_{B_1}|Du-Dw_1|dx\leqslant c.
\end{equation*}
If $1+i_g\leqslant n$, we define
$$D_{k}:=\left\lbrace x \in B_{1} : |u(x)-w_1(x)| \leq k \right\rbrace, \ \ \ \forall \ k>0, $$
and let $v_k:=u+T_{k}(w_1-u)$ and $\overline{v_k}:=w_1-T_{k}(w_1-u) \in u+W_{0}^{1,G}(B_1)$ as comparison functions in the variational inequalities \eqref{u0} and \eqref{w-v} respectively, then  we have
 \begin{equation}\label{k-}
 \int_{B_1}[a(x,Dw_1)-a(x,Du)] \cdot D[T_{k}(w_1-u)]dx \leq k\int_{B_1}|f|+|\ddiv a(x,D\psi_2)|dx.
 \end{equation}
 Then  for every $k\geqslant1$, we have
\begin{eqnarray*}
\int_{D_k}|Du-Dw_1|^{1+i_g}dx
&\leq & c\int_{B_1}[a(x,Dw_1)-a(x,Du)] \cdot D[T_{k}(w_1-u)]dx+c    \nonumber  \\
&\leq & ck.
\end{eqnarray*}
 Now using Lemma \ref{qianqi}, we obtain
$$\int_{B_1}|Du-Dw_1|dx \leqslant c.$$

Case 2: $\int_{B_1}|f|+|\ddiv a(x,D\psi_2)|dx>1$.
We let $$\overline{u}(x)=A^{-1}u(x), \ \ \ \overline{v}(x)=A^{-1}v(x), \ \ \ \overline{w_1}(x)=A^{-1}w_1(x), \ \ \  \overline{\psi_1}(x)=A^{-1}\psi_1(x)$$
$$\overline{\psi_2}(x)=A^{-1}\psi_2(x), \ \ \ \overline{f}(x)=A^{-i_g}f(x), \ \ \ \overline{a}(x,\eta)=A^{-i_g}a(x,A\eta), \ \ \ \overline{G}(t)=\int_{0}^{t}\overline{g}(\tau)d\tau,$$
where $A=(\int_{B_1}|f|+|\ddiv a(x,D\psi_2)|dx)^{\frac{1}{i_g}}>1$.
Then we can easily obtain $\int_{B_1}|\overline{f}|+|\ddiv a(x,D\psi_2)|dx \ls 1$, and $\overline{a}$ satisfies \eqref{a(x)1} and $\overline{g}$ satisfies \eqref{a(x)3}.

Moreover,
$\overline{u}\in W^{1,G}(B_1)$ with $ \overline{\psi_1} \ls \overline{u}  \ls \overline{\psi_2} $ solves the variational inequality
\begin{equation*}
\int_{B_1} \overline{a}(x,D\overline{u})\cdot D(\overline{v}-\overline{u})dx \geq \int_{B_1} (\overline{v}-\overline{u}) \overline{f}dx
\end{equation*}
for any  $\overline{v} \in \overline{u}+W_0^{1,G}(B_1)$ that  satisfy $ \overline{\psi_1} \ls \overline{v}  \ls \overline{\psi_2} $ a.e.  on $B_1$.

And
$\overline{w_1}\in \overline{u}+W_{0}^{1,G}(B_1)$ with $\overline{w_1} \geq \overline{\psi_1}$ solves the  inequality
\begin{equation*}
\int_{B_1} \overline{a}(x,D\overline{w_1})\cdot D(\overline{v}-\overline{w_1})dx \geq \int_{B_1} \overline{a}(x,D\overline{\psi_2})\cdot D(\overline{v}-\overline{w_1})dx.
\end{equation*}

Similar to the case 1 we have
\begin{equation*}
\int_{B_1} |D\overline{u}-D\overline{w_1}|dx \leqslant c.
\end{equation*}
In conclusion, we have
\begin{equation*}
\int_{B_1}|Du-Dw_1|dx \leqslant c\left(\int_{B_1}|f|+|\ddiv a(x,D\psi_2)|dx\right)^{\frac{1}{i_g}},
\end{equation*}
which finishes our proof.
\end{proof}

\begin{corollary}\label{coro}
Assume that conditions  \eqref{a(x)1}-\eqref{a(x)3} are fulfilled, let $w_1$ be as in Lemma \ref{bijiao} and $\mu \in  \mathcal{M}_{b}(\Omega)$ and u be a limit of approximating solutions for $OP(\psi_1, \psi_2; \mu)$, in the sense of Definition \ref{opdy}. Then there exists $c=c(data)$ such that
\begin{equation} \label{coro1}
\fint_{B_R(x_0)}|Du-Dw_1|dx\leqslant c\left[\frac{|\mu|(\overline{B_R}(x_0))}{R^{n-1}}+R \fint_{B_R(x_0)}|\ddiv a(x,D\psi_2)|dx\right]^{\frac{1}{i_g}}.
\end{equation}
\end{corollary}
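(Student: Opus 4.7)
The plan is to apply Lemma \ref{bijiao} along an approximating sequence for the measure datum and then pass to the limit, using the mass concentration property from Definition \ref{opdy} together with a stability estimate for the single obstacle problem with respect to the boundary data.

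First, by Definition \ref{opdy} applied to the approximable solution $u$, I would fix a sequence $\{f_i\}\subset L^1(\Omega)\cap (W^{1,G}(\Omega))'$ and variational solutions $u_i\in W^{1,G}(\Omega)$ with $\psi_1\leqslant u_i\leqslant\psi_2$ a.e., such that $u_i\to u$ in $W^{1,1}(\Omega)$ and
\[
\limsup_{i\to\infty}\int_{B_R(x_0)}|f_i|\,dx\leqslant|\mu|(\overline{B_R(x_0)}).
\]
For each $i$, I would introduce $w_1^{(i)}\in u_i+W_0^{1,G}(B_R(x_0))$ with $w_1^{(i)}\geqslant\psi_1$ a.e., the unique solution of the single obstacle problem \eqref{w-v} with $u$ replaced by $u_i$. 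Applying Lemma \ref{bijiao} to the triple $(u_i,w_1^{(i)},f_i)$ immediately yields
\[
\fint_{B_R(x_0)}|Du_i-Dw_1^{(i)}|\,dx\leqslant c\left[R\fint_{B_R(x_0)}\bigl(|f_i|+|\ddiv a(x,D\psi_2)|\bigr)dx\right]^{1/i_g}.
\]

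The final step is passage to the limit $i\to\infty$. On the right-hand side, the continuity of $t\mapsto t^{1/i_g}$ combined with the concentration bound from Definition \ref{opdy} produces exactly the quantity $\bigl(|\mu|(\overline{B_R(x_0)})/R^{n-1}+R\fint|\ddiv a(x,D\psi_2)|\bigr)^{1/i_g}$ appearing in \eqref{coro1}. On the left-hand side, the strong $W^{1,1}$ convergence $u_i\to u$ delivers $Du_i\to Du$ in $L^1(B_R(x_0))$; it then suffices to show $Dw_1^{(i)}\to Dw_1$ in $L^1(B_R(x_0))$, where $w_1\in u+W_0^{1,G}(B_R(x_0))$ is the obstacle solution attached to $u$. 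The conclusion would then follow from the triangle inequality and Fatou's lemma.

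The main obstacle is the stability of the single obstacle solution under perturbation of the boundary datum, because $w_1^{(i)}-w_1$ is \emph{not} in $W_0^{1,G}$ (its trace equals that of $u_i-u$), so it cannot be used directly as a test function. I would handle this by first extracting uniform $W^{1,G}$-bounds for $\{w_1^{(i)}\}$ from the energy estimate obtained by testing \eqref{w-v} with $v=\max\{\psi_1,u_i\}\in w_1^{(i)}+W_0^{1,G}$ (an admissible obstacle-preserving competitor since $\psi_1\leqslant u_i\leqslant\psi_2$), which bounds $\int G(|Dw_1^{(i)}|)$ in terms of $\int G(|Du_i|)+\int G(|D\psi_2|)$. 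A weakly convergent subsequence $w_1^{(i)}\rightharpoonup\tilde w$ in $W^{1,G}$ is then identified with $w_1$ by letting $i\to\infty$ in its variational inequality, using obstacle-preserving test functions of the form $v=w_1\pm(u_i-u)\pm\chi$ with suitable truncations $\chi$ to maintain $v\geqslant\psi_1$; uniqueness of $w_1$ then upgrades the subsequential convergence to convergence of the full sequence. Monotonicity of $a(x,\cdot)$ via Lemma \ref{adaog} finally converts weak convergence into the required strong $L^1$ convergence of gradients.
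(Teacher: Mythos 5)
Your overall strategy --- apply Lemma~\ref{bijiao} along the approximating sequence $(u_i,f_i)$ with comparison functions $w_1^{(i)}$, then pass to the limit using $Du_i\to Du$ in $L^1$ and the $\limsup$ bound on $\int_{B_R}|f_i|$ --- is the right one, and matches the route the paper indicates by citing the single-obstacle analogue in \cite{xiong1}. However, your stability step contains a genuine gap: you propose to obtain uniform $W^{1,G}$-bounds for $\{w_1^{(i)}\}$ by bounding $\int G(|Dw_1^{(i)}|)$ in terms of $\int G(|Du_i|)+\int G(|D\psi_2|)$, but for a general measure $\mu$ the Orlicz energies $\int G(|Du_i|)$ are \emph{not} uniformly bounded in $i$. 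The approximants $f_i$ converge to $\mu$ only weak-$*$ in measures, not in $(W^{1,G}(\Omega))'$, so the energies of the $u_i$ typically diverge --- this is precisely why the approximable solution is only posited to lie in $\mathcal{T}^{1,G}_h(\Omega)$ and to converge in $W^{1,1}$, not in $W^{1,G}$. Without the uniform $W^{1,G}$ bound there is no weakly convergent subsequence in $W^{1,G}$, and the rest of your identification and strong-convergence argument collapses. (A minor remark: since $u_i\geqslant\psi_1$ a.e., your competitor $\max\{\psi_1,u_i\}$ is simply $u_i$ itself.)

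The fix must stay at the $L^1$ level, which is exactly where Lemma~\ref{bijiao} lives. That lemma already yields the uniform bound $\int_{B_R}|Dw_1^{(i)}|\leqslant\int_{B_R}|Du_i|+C$, and together with the $W^{1,1}$-boundedness of the $u_i$ this gives $L^1$-boundedness of $Dw_1^{(i)}$. Convergence of $Dw_1^{(i)}$ in $L^1$ is then obtained by a truncation-and-monotonicity argument: both $w_1^{(i)}$ and $w_1^{(j)}$ solve obstacle problems with the \emph{same} fixed datum $a(x,D\psi_2)$, so testing the two variational inequalities against translated truncations built from $T_k\big(w_1^{(i)}-w_1^{(j)}-(u_i-u_j)\big)$, invoking Lemma~\ref{adaog}, and converting the resulting level-set estimate to an $L^1$ bound via Lemma~\ref{qianqi} produces a Cauchy-type estimate in terms of $\|u_i-u_j\|_{W^{1,1}}$. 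This is the style of argument in \cite{s26,s28} and in \cite{xiong1}, and it is what the paper has in mind when it states the corollary ``follows a similar approach to Corollary~4.2 of \cite{xiong1}.''
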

The proof of corollary \ref{coro} follows a similar approach to the proof of corollary 4.2 presented in \cite{xiong1}; hence, we omit its repetition in this exposition.

The following results have been proved in our previous work \cite{xiong1,xiong2}.
\begin{lemma}\label{w1-w2}
Assume that conditions  \eqref{a(x)1}-\eqref{a(x)3} are fulfilled, let $B_{2R}(x_0)\subset \Omega,  f \in L^{1}(B_R(x_0))\cap (W^{1,G}(B_R(x_0)))'$ and the map $w_1\in W^{1,G}(B_R(x_0))$ with $ w_1 \geq \psi_1$ solves the variational inequality \eqref{w-v}. 
Let $w_2 \in w_1+W_{0}^{1,G}(B_R(x_0))$ with $w_2 \geq \psi_1 $ be the weak solution of the homogeneous obstacle problem
\begin{equation}\label{vw1}
\int_{B_R(x_0)}a(x,Dw_2)\cdot D(v-w_2)dx\geqslant 0
\end{equation}
for any $v \in w_2+W_0^{1,G}(B_R(x_0))$ with $v\geqslant \psi_1 \ a.e. \ in \ B_{R}(x_0)$. 
Then there exists $c=c(data)$ such that
\begin{equation*}
\fint_{B_R(x_0)} |Dw_1-Dw_2| \operatorname{d}x\leqslant c\left[R \fint_{B_R(x_0)}|\ddiv a(x,D\psi_2)|dx\right] ^{\frac{1}{i_g}}.
\end{equation*}
\end{lemma}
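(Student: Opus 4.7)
The plan is to follow the two-case strategy of Lemma~\ref{bijiao}. First I would reduce to the unit ball via the rescaling $\widehat{w}_i(x):=w_i(Rx+x_0)/R$, $\widehat{\psi}_j(x):=\psi_j(Rx+x_0)/R$ and $\widehat{a}(x,\eta):=a(Rx+x_0,\eta)$, so it suffices to treat $x_0=0$, $R=1$. The guiding observation is that, since $v-w_1\in W_0^{1,G}(B_1)$ and $\ddiv a(\cdot,D\psi_2)\in L^1(B_1)$ (granted by $\psi_2\in W^{1,G}\cap W^{2,1}$ together with \eqref{a(x)1}), integration by parts recasts the forcing term in \eqref{w-v} as $-\int \ddiv a(x,D\psi_2)(v-w_1)\,dx$; hence $w_1$ is a solution of the $\psi_1$-obstacle problem with $L^1$ source $-\ddiv a(x,D\psi_2)$, while $w_2$ is the corresponding solution with zero source. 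With $M:=\int_{B_1}|\ddiv a(x,D\psi_2)|\,dx$, the argument splits into $M\le 1$ and $M>1$.

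For $M\le 1$, I would mirror the admissible competitors used in Lemma~\ref{bijiao}. When $1+i_g>n$, I insert $v=\overline{v}=(w_1+w_2)/2$ in \eqref{w-v} and \eqref{vw1}, which is admissible because $w_1,w_2\ge\psi_1$; summing the two inequalities, using Lemma~\ref{adaog} on the left and Sobolev's embedding $W_0^{1,1+i_g}(B_1)\hookrightarrow L^\infty(B_1)$ to control $\|w_1-w_2\|_{L^\infty}$ on the right, yields $\int_{B_1}|Dw_1-Dw_2|\,dx\le c$. When $1+i_g\le n$, I would instead test \eqref{w-v} with $v:=w_1+T_k(w_2-w_1)$ and \eqref{vw1} with $\overline{v}:=w_2-T_k(w_2-w_1)$; summation, Lemma~\ref{adaog}, and integration by parts in the $\ddiv a(x,D\psi_2)$ term give, for every $k\ge 1$,
\begin{equation*}
\int_{\{|w_1-w_2|\le k\}}|Dw_1-Dw_2|^{1+i_g}\,dx\le ck,
\end{equation*}
which combined with Lemma~\ref{qianqi} again produces $\int_{B_1}|Dw_1-Dw_2|\,dx\le c$. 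The case $M>1$ is handled by the renormalization $A:=M^{1/i_g}$ with $\overline{w}_i:=A^{-1}w_i$, $\overline{\psi}_j:=A^{-1}\psi_j$, $\overline{a}(x,\eta):=A^{-i_g}a(x,A\eta)$; the rescaled operator still satisfies \eqref{a(x)1}--\eqref{a(x)3} and the rescaled divergence now has unit $L^1$-norm, reducing to the previous case. Unravelling both rescalings produces the factor $M^{1/i_g}$ and the factor $R^{1/i_g}$ in the bracket.

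The main technical delicacy is verifying that the truncation competitors $v=w_1+T_k(w_2-w_1)$ and $\overline{v}=w_2-T_k(w_2-w_1)$ genuinely lie above the obstacle $\psi_1$: on $\{w_2\ge w_1\}$ one has $v\ge w_1\ge\psi_1$, while on $\{w_2<w_1\}$ one has $v=\max(w_1-k,w_2)\ge w_2\ge\psi_1$, with a symmetric check for $\overline{v}$. Together with the admissibility $v-w_1,\overline{v}-w_2\in W_0^{1,G}(B_1)$ (inherited from $w_1-w_2\in W_0^{1,G}(B_1)$ and the Stampacchia-type stability of truncation), this is what makes the two variational inequalities combinable. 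Once this is in place, the remaining steps are structurally identical to the proof of Lemma~\ref{bijiao}, with the source $f+\ddiv a(x,D\psi_2)$ there replaced by $\ddiv a(x,D\psi_2)$ alone.
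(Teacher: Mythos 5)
Your proposal is correct and mirrors the paper's own strategy: the paper does not reprove Lemma~\ref{w1-w2} here (it is cited from \cite{xiong1,xiong2}), but its proof is structurally identical to that of Lemma~\ref{bijiao}, which you have reproduced faithfully — the rescaling to $B_1$, the integration by parts recasting the forcing as an $L^1$ source, the split into $M\le 1$ versus $M>1$, the use of $(w_1+w_2)/2$ when $1+i_g>n$ together with Sobolev embedding, the truncation competitors $w_1+T_k(w_2-w_1)$ and $w_2-T_k(w_2-w_1)$ when $1+i_g\le n$ followed by Lemma~\ref{qianqi}, and the renormalization $A=M^{1/i_g}$. Your admissibility check for the truncated competitors (the $\max$ rewriting on the two sets $\{w_2\ge w_1\}$ and $\{w_2<w_1\}$) is exactly the delicate point and is handled correctly.
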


\begin{lemma} \label{diyi}
Under the conditions  \eqref{a(x)1}-\eqref{a(x)3}, let $B_{2R}(x_0)\subseteq \Omega$, $w_2\geqslant 0$, $ \psi_1 $ $\in W^{1,G}(B_{2R}(x_0))$,  $w_2 \in W^{1,G}(B_{2R}(x_0))$ with $w_2\geqslant \psi_1$ solve the inequality
 \eqref{vw1}
  and  $w_3 \in W^{1,G}(B_{2R}(x_0))$ with $w_3 \geqslant \psi_1$ solve the inequality
\begin{equation}\label{vw2}
\int_{B_R(x_0)}\overline{a}_{B_R(x_0)}(Dw_3)\cdot D(v-w_3)dx\geqslant 0
\end{equation}
and $w_3=w_2$ on $\partial B_{R}(x_0)$.
Then we have
\begin{eqnarray*}
 &&\fint_{B_R(x_0)}|Dw_2-Dw_3|dx  \\ 
 &\leqslant&  c \omega(R)^{\frac{1}{1+s_g}}\left\lbrace \fint_{B_{2R}(x_0)}|Dw_2|dx+G^{-1}\left[\fint_{B_{2R}(x_0)}[G(|D\psi_1|)+G(|\psi_1|)]dx \right] \right\rbrace,
\end{eqnarray*}
where $c=c(data)$.
\end{lemma}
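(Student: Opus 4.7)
The plan is to follow the classical coefficient-freezing comparison used for elliptic equations, adapted to the obstacle setting by checking that each of $w_2, w_3$ remains admissible as a test function in the other's variational inequality. Since $w_3 \geq \psi_1$ and $w_3 - w_2 \in W_0^{1,G}(B_R(x_0))$, we can test \eqref{vw1} with $v=w_3$ and, symmetrically, test \eqref{vw2} with $v=w_2$. Adding the two resulting inequalities and rearranging produces
\begin{equation*}
\int_{B_R(x_0)}\bigl[\overline{a}_{B_R(x_0)}(Dw_2)-\overline{a}_{B_R(x_0)}(Dw_3)\bigr]\cdot D(w_2-w_3)\,dx
\leqslant \int_{B_R(x_0)}\bigl[\overline{a}_{B_R(x_0)}(Dw_2)-a(x,Dw_2)\bigr]\cdot D(w_2-w_3)\,dx.
\end{equation*}
Applying Lemma \ref{adaog} to the left-hand side and the pointwise bound $|\overline{a}_{B_R}(\eta)-a(x,\eta)|\leqslant \theta(a,B_R)(x)\,g(|\eta|)$ to the right-hand side yields
\begin{equation*}
c\int_{B_R(x_0)}G(|Dw_2-Dw_3|)\,dx \leqslant \int_{B_R(x_0)}\theta(a,B_R(x_0))(x)\,g(|Dw_2|)\,|Dw_2-Dw_3|\,dx.
\end{equation*}

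Next I would extract the factor $\omega(R)^{1/(1+s_g)}$. Using Young's inequality from Lemma \ref{gyoung} to split $\theta\,g(|Dw_2|)|Dw_2-Dw_3|$ and absorbing the resulting $G(|Dw_2-Dw_3|)$ term into the left-hand side gives a bound of the form $\int G(|Dw_2-Dw_3|)\,dx \leqslant c\int G^{*}\!\bigl(\theta g(|Dw_2|)\bigr)\,dx$. The conjugate-growth estimate from Lemma \ref{ag} together with \eqref{a(x)4} converts this into a bound by $\int \theta^{\gamma} G(|Dw_2|)\,dx$ for a suitable $\gamma>0$; since $|\theta|\leqslant 2L$ is bounded, a Hölder splitting $\fint \theta^{\gamma} G(|Dw_2|) \leqslant (\fint \theta^{q})^{1/q}(\fint G(|Dw_2|)^{q'})^{1/q'}$ with $\fint \theta^{q}\leqslant c\,\omega(R)$, followed by Jensen in the form $G(\fint|Dw_2-Dw_3|)\leqslant \fint G(|Dw_2-Dw_3|)$ and the inverse growth estimate $G^{-1}(\omega^{?}G(t))\leqslant \omega^{1/(1+s_g)}\,t$ from Lemma \ref{ag}, delivers the exponent $1/(1+s_g)$ on $\omega(R)$.

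To produce the $\psi_1$-terms on the right, I would then derive a Caccioppoli-type energy estimate for $w_2$ on $B_{2R}(x_0)$. Since $w_2\geqslant \psi_1$, the choice $v=w_2-\eta^{s_g+1}(w_2-\psi_1)$ with a cutoff $\eta\in C^{\infty}_{c}(B_{2R}(x_0))$, $\eta\equiv 1$ on $B_R(x_0)$, is admissible in \eqref{vw1}. Standard use of $\nabla_\eta$ growth from \eqref{a(x)1} and Young's inequality for the Orlicz pair $(G,G^{*})$ then gives a bound
\begin{equation*}
\fint_{B_R(x_0)}G(|Dw_2|)\,dx \leqslant c \fint_{B_{2R}(x_0)}\bigl[G(|D\psi_1|)+G(|\psi_1|)\bigr]dx+ c\,G\!\left(\fint_{B_{2R}(x_0)}|Dw_2|\,dx\right),
\end{equation*}
after standard manipulations (here one also uses $w_2\geqslant 0$ to replace the inessential $w_2/R$-contributions by $|Dw_2|$ through Poincaré). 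Feeding this back and using subadditivity of $G^{-1}$ to split the sum separates the contribution that scales linearly in $\fint|Dw_2|$ from the $G^{-1}[\fint (G(|D\psi_1|)+G(|\psi_1|))]$ part, producing exactly the right-hand side of the claim.

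The main obstacle will be keeping track of the correct Orlicz exponents across the Young, Hölder and Jensen steps so that the exponent $1/(1+s_g)$ on $\omega(R)$ comes out cleanly and the final combination with $G^{-1}$ splits additively into the gradient-average term and the obstacle $G^{-1}$-term. In particular, the Hölder splitting that extracts $\omega(R)$ must be balanced against the $(1+s_g)$-growth of $G$ from Lemma \ref{ag} so that $G^{-1}$ on a product $\omega(R)^{\alpha}\cdot (\cdots)$ does collapse into $\omega(R)^{1/(1+s_g)}\cdot G^{-1}(\cdots)$; this is the technical heart of the argument and requires the bound $|\theta|\leqslant 2L$ and the Dini-BMO hypothesis from Definition \ref{df1} to be used in tandem.
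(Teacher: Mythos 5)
Your outline follows the standard coefficient--freezing comparison used for Orlicz-growth obstacle problems, and since the paper defers this lemma to \cite{xiong1,xiong2} without reproducing a proof, there is no in-paper argument to compare against step by step. The core steps are sound: testing \eqref{vw1} with $v=w_3$ and \eqref{vw2} with $v=w_2$ (both admissible since $w_2,w_3\geq\psi_1$ and $w_2-w_3\in W_0^{1,G}(B_R)$), adding, and rearranging to the inequality you display is correct; applying Lemma \ref{adaog} to the left side is legitimate because $\overline{a}_{B_R}$ inherits the structure \eqref{a(x)1} from $a(x,\cdot)$; and the pointwise bound by $\theta\,g(|Dw_2|)$ on the right side, the $\varepsilon$-Young absorption, and the estimate $G^{*}(\theta g(t))\leq c\,\theta\,G(t)$ (using $\theta\leq 2L$, convexity of $G^{*}$, and $G^{*}(g(t))\lesssim G(t)$ from Lemma \ref{gwan}) are all standard and correct.

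Two points deserve more care. First, the H\"older splitting $\fint\theta^{\gamma}G(|Dw_2|)\leq(\fint\theta^{q})^{1/q}(\fint G(|Dw_2|)^{q'})^{1/q'}$ relies on a reverse--H\"older/Gehring inequality for $G(|Dw_2|)$ — the nontrivial higher-integrability step for obstacle problems — which you use but do not name; moreover this route delivers $\omega(R)^{1/q}=\omega(R)^{\sigma/(1+\sigma)}$ (with $\sigma$ the Gehring exponent) rather than the full power $\omega(R)$, and after Jensen and the $G^{-1}$-scaling of Lemma \ref{ag} this gives $\omega(R)^{\sigma/((1+\sigma)(1+s_g))}$, which is weaker than the stated $\omega(R)^{1/(1+s_g)}$. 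You flag the exponent bookkeeping as the technical heart; you should verify against \cite{xiong1,xiong2} whether a sharper split (or a slightly different normalization of $\omega$) recovers the stated exponent, or whether the weaker power is what is actually proved and used. Second, in the Caccioppoli step, the passage from $\fint_{B_{2R}}G(|w_2-\psi_1|/R)$ to the claimed right-hand side — in particular where the assumption $w_2\geq 0$ is exploited together with Sobolev--Poincar\'e in Orlicz spaces — is nontrivial and should be spelled out; as written, "standard manipulations" hides the one place where the hypothesis $w_2\geq 0$ actually enters.
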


\begin{lemma}\label{zabj}
Assume that conditions  \eqref{a(x)1}-\eqref{a(x)3} are fulfilled, and  $z \in W^{1,G}(B_R(x_0))$ with $z\geqslant \psi_1$ solves the inequality \eqref{vw1}.  Let $w \in z+W_0^{1,G}(B_R(x_0))$  be a weak solution of the equation
\begin{equation*}
-\operatorname{div}\left(  a(x, Dw)\right)=  -\operatorname{div}\left(  a(x, D\psi_1)\right) \ \ \ \  in \ \ B_R(x_0)
\end{equation*}
and $ \psi_1 \in W^{1,G}(B_R(x_0))\cap W^{2,1}(B_R(x_0)), \ \operatorname{div}\left(  a(x, D\psi_1)\right) \in L^1(B_R(x_0)).$
Then there exists $c=c(data)>0$ such that
\begin{equation}
\fint_{B_R(x_0)}|Dz-Dw|dx\leqslant c\left(R\fint_{B_R(x_0)}(|\operatorname{div}\left(  a(x, D\psi_1)\right)|+1)dx\right)^{\frac{1}{i_g}}.
\end{equation}
\end{lemma}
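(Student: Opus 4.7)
The overall strategy is to follow the template of Lemma \ref{bijiao}: a monotonicity-based energy comparison, truncated to handle the lack of a natural $L^\infty$ bound on $w-z$, together with the iteration Lemma \ref{qianqi} and a rescaling to reduce to small data. By the usual dilation $y \mapsto (y-x_0)/R$, it is enough to prove the claim for $R=1$, and undoing the scaling at the end produces the factor of $R$ in the right-hand side.

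Before picking test functions, I would first prove the auxiliary bound $w \geq \psi_1$ a.e.\ in $B_1$. Since $w-z \in W_0^{1,G}(B_1)$ and $z \geq \psi_1$, the trace of $\psi_1 - w$ is nonpositive, so $(\psi_1 - w)_+ \in W_0^{1,G}(B_1)$ is a legitimate test function in the equation for $w$; plugging it in and invoking Lemma \ref{adaog} forces $D(\psi_1 - w)_+ \equiv 0$, hence $w \geq \psi_1$.

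With this in hand, split $w - z = \eta_+ - \eta_-$ with $\eta_\pm := (w-z)_\pm$. In the obstacle inequality \eqref{vw1} I would test with $v_1 := z + T_k(\eta_+)$ and $v_2 := z - T_k(\eta_-)$: admissibility $v_1 \geq \psi_1$ is immediate, while for $v_2$ we use $w \geq \psi_1$, since on $\{z-w \leq k\}$ one has $v_2 = w$, and on $\{z-w > k\}$ one has $v_2 = z - k \geq \psi_1$ because $z > w + k \geq \psi_1 + k$. In parallel, I would test the equation for $w$ with $T_k(\eta_\pm) \in W_0^{1,G}(B_1)$. Subtracting the four identities and invoking the monotonicity of $a$ yields
\begin{equation*}
\int_{\{|w-z|<k\}} G(|Dw - Dz|)\, dx \;\leq\; c\, k \int_{B_1} |\operatorname{div}(a(x,D\psi_1))|\, dx.
\end{equation*}
From here the conclusion runs exactly as in Lemma \ref{bijiao}: in the small-data regime $\int_{B_1}(|\operatorname{div}(a(x,D\psi_1))|+1)\, dx \leq 1$ I would convert the $G$-bound to an $L^{1+i_g}$ bound via Lemma \ref{ag}, apply Lemma \ref{qianqi} to $h = w - z$ with $M \simeq 1$ and conclude $\int_{B_1}|Dw-Dz|\, dx \leq c$ (the easier case $1+i_g > n$ goes directly through the Sobolev $L^\infty$ embedding); in the large-data regime I would rescale by $A := \bigl(\int_{B_1}(|\operatorname{div}(a(x,D\psi_1))|+1)\, dx\bigr)^{1/i_g}$ exactly as in Case~2 of Lemma \ref{bijiao} to reduce to the small-data case.

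The subtlest technical step is the admissibility check for $v_2$, since the two-sided truncation $T_k$ could in principle push $v_2$ below the obstacle $\psi_1$; this is precisely what forces the pointwise comparison $w \geq \psi_1$ to be established beforehand rather than derived as a by-product, and is the real new ingredient relative to the one-obstacle comparison of Lemma \ref{bijiao}.
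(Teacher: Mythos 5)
Your proof is correct and follows the same truncation--monotonicity--rescaling template that the paper uses for Lemma~\ref{bijiao}; the paper itself does not reproduce a proof of Lemma~\ref{zabj} (it cites \cite{xiong1,xiong2}), but your argument is exactly the natural adaptation of that template, including the preliminary comparison $w\geqslant\psi_1$ (the analogue of the step $w_1\leqslant\psi_2$ in Lemma~\ref{bijiao}), which is indeed the ingredient that makes the test function $v_2$ admissible. One small slip: the small-data split should be on $\int_{B_1}|\operatorname{div}(a(x,D\psi_1))|\,dx\leqslant 1$ rather than on the version including the ``$+1$'' inside the integrand, since $\int_{B_1}1\,dx=|B_1|>1$ for $n\geqslant 2$ makes that inequality vacuous; the ``$+1$'' in the final estimate is absorbed when you unify the two cases, not used to define the split.
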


\begin{lemma}\label{fbjg}
Assume that conditions  \eqref{a(x)1}-\eqref{a(x)3} are fulfilled, let  $f,g \in L^{1}(B_R(x_0))\cap (W^{1,G}(B_R(x_0)))'$ and
$z,w \in W^{1,G}(B_R(x_0))$ with $z-w \in W_{0}^{1,G}(B_R(x_0))$ be weak solutions of
\begin{equation}\label{fhg}
\left\{\begin{array}{r@{\ \ }c@{\ \ }ll}
-\operatorname{div}\left(  a(x, Dz)\right) =f & \mbox{in}\ \ B_R(x_0)\,, \\[0.05cm]
-\operatorname{div}\left(  a(x, Dw)\right) =g & \mbox{in}\ \ B_R(x_0)\,. \\[0.05cm]
\end{array}\right.
\end{equation}
Then the following comparison estimates hold:
\begin{equation}
\fint_{B_R(x_0)}|Dz-Dw|dx\leqslant c\left(R\fint_{B_R(x_0)}(|f|+|g|+1)dx \right)^{\frac{1}{i_g}},
\end{equation}
where $c=c(data)>0$.
\end{lemma}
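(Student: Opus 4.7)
The plan is to imitate the argument used for Lemma \ref{bijiao}, which is actually simpler here because we are dealing with equations rather than obstacle inequalities and the test function $z-w$ is already admissible without any truncation at the obstacles. As in Lemma \ref{bijiao}, I would first normalize to $x_0=0$ and $R=1$ via the scaling
$$\widehat z(x)=\tfrac{z(Rx)}{R},\quad \widehat w(x)=\tfrac{w(Rx)}{R},\quad \widehat f(x)=Rf(Rx),\quad \widehat g(x)=Rg(Rx),\quad \widehat a(x,\eta)=a(Rx,\eta),$$
which preserves \eqref{a(x)1}--\eqref{a(x)3} and reduces the estimate to the unit ball. Then I would separate two cases according to whether the data is small or large.

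In the small data case $\int_{B_1}(|f|+|g|+1)\,dx\le 1$, subtract the two equations in \eqref{fhg} and test with $T_k(z-w)\in W_0^{1,G}(B_1)$ (admissible since $z-w\in W_0^{1,G}(B_1)$), obtaining
$$\int_{B_1}[a(x,Dz)-a(x,Dw)]\cdot DT_k(z-w)\,dx=\int_{B_1}(f-g)\,T_k(z-w)\,dx.$$
Lemma \ref{adaog} converts the left side into $\int_{D_k}G(|Dz-Dw|)\,dx$ up to a constant, where $D_k:=\{|z-w|\le k\}$, while the right side is bounded by $k\int_{B_1}(|f|+|g|)\,dx\le k$. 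Using $L^G\subset L^{1+i_g}$ (see \eqref{lg}) and Lemma \ref{ag}, this gives
$$\int_{D_k}|Dz-Dw|^{1+i_g}\,dx\le c(k+1)\qquad\text{for every }k\ge 1.$$
If $1+i_g>n$, Sobolev embedding applied to $z-w\in W_0^{1,1+i_g}(B_1)$ gives $\|z-w\|_{L^\infty(B_1)}\le c\|Dz-Dw\|_{L^{1+i_g}(B_1)}$, so the letting $k\to\infty$ combined with a Young-type absorption on the right yields $\|Dz-Dw\|_{L^1(B_1)}\le c$. If $1+i_g\le n$, the preceding inequality is exactly the hypothesis of Lemma \ref{qianqi} with $M=c$, so applying Lemma \ref{qianqi} directly delivers $\int_{B_1}|Dz-Dw|\,dx\le c$.

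In the large data case $A:=\bigl(\int_{B_1}(|f|+|g|+1)\,dx\bigr)^{1/i_g}>1$, I would rescale
$$\overline z=A^{-1}z,\ \overline w=A^{-1}w,\ \overline f=A^{-i_g}f,\ \overline g=A^{-i_g}g,\ \overline a(x,\eta)=A^{-i_g}a(x,A\eta),$$
and check, exactly as in Case 2 of Lemma \ref{bijiao}, that $\overline a$ satisfies \eqref{a(x)1} with the same constants, that $\overline z,\overline w$ solve the rescaled version of \eqref{fhg}, and that $\int_{B_1}(|\overline f|+|\overline g|+1)\,dx\le 1$. Applying the small data case to the rescaled pair and scaling back produces the claimed bound with the exponent $1/i_g$ on the right. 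Finally, undoing the initial $R$-scaling yields the statement.

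I expect the principal obstacle to be the case $1+i_g\le n$, where integrability of $Dz-Dw$ does not follow directly from Sobolev and one must rely on the level-set machinery of Lemma \ref{qianqi}; verifying that its hypothesis holds with the right constant $M$ is the delicate step. The scaling argument in the large data case is routine once the structural invariance of $\overline a$ under the transformation $\eta\mapsto A\eta$ is checked.
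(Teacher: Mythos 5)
The paper does not actually prove this lemma in the present manuscript; it refers to the prior works \cite{xiong1,xiong2} for the comparison lemmas \ref{w1-w2}--\ref{fbjg}. Your proposal mirrors the proof of Lemma \ref{bijiao} given in the paper, which is indeed the natural template, and the structure (scaling to $B_1$, truncation test functions with Lemma \ref{adaog}, Lemma \ref{qianqi} for $1+i_g\le n$ versus Sobolev embedding for $1+i_g>n$, and a dilation $\eta\mapsto A\eta$ to handle large data) is exactly the intended argument.

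There is, however, one genuine slip: the small/large data dichotomy must be taken as $\int_{B_1}(|f|+|g|)\,dx\le 1$ versus $A:=\bigl(\int_{B_1}(|f|+|g|)\,dx\bigr)^{1/i_g}>1$, \emph{without} the $+1$ inside. As written, your small-data case requires $\int_{B_1}(|f|+|g|+1)\,dx\le 1$, which is impossible for moderate dimensions since the left side is already at least $|B_1|>1$; and in your large-data case, after the $A$-rescaling one has
$\int_{B_1}(|\overline f|+|\overline g|+1)\,dx = A^{-i_g}\int_{B_1}(|f|+|g|)\,dx + |B_1| = 1+|B_1|>1$,
so the claim that the rescaled data falls into the small-data regime fails. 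Dropping the $+1$ from the threshold resolves both points: the dilation gives $\int_{B_1}(|\overline f|+|\overline g|)\,dx=1$ exactly, and in the small-data regime the conclusion $\int_{B_1}|Dz-Dw|\,dx\le c$ is still absorbed by the right-hand side of the lemma because $\bigl(\int_{B_1}(|f|+|g|+1)\,dx\bigr)^{1/i_g}\ge |B_1|^{1/i_g}$ is bounded below. The $+1$ in the \emph{conclusion} is thus needed precisely to accommodate the constant produced by the small-data case, but it should not appear in the case split itself (compare the wording of Cases 1--2 in the proof of Lemma \ref{bijiao}).

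Two smaller presentational remarks: the phrase ``Lemma \ref{adaog} converts the left side into $\int_{D_k}G(|Dz-Dw|)\,dx$ up to a constant'' should read as a one-sided lower bound; and the passage from $\int_{D_k}G(|Dz-Dw|)\,dx\le ck$ to $\int_{D_k}|Dz-Dw|^{1+i_g}\,dx\le c(k+1)$ is via Lemma \ref{ag}(1) (namely $G(t)\ge G(1)\,t^{1+i_g}$ for $t\ge 1$) together with the trivial bound on the set where $|Dz-Dw|<1$, not via the space inclusion \eqref{lg} as stated. With these corrections the proof is complete and takes essentially the same route the authors intend.
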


Next we  state an excess decay estimate for a homogeneous comparison problem.
\begin{lemma}(see \cite{b13}, lemma 4.1)\label{lemma1.6}
If $ z\in W_{loc}^{1,G}(\Omega)$ is a local weak solution of
\begin{equation}
-\operatorname{div}\left(  a(Dz)\right) =0 \ \ \ \ \ \ in\ \ \ \Omega,
\end{equation}
under the assumptions  \eqref{a(x)1} and \eqref{a(x)3}, then there exist constants $\beta \in (0,1)$ and $c=c(data)$ such that
\begin{equation}
\fint_{B_\rho(x_0)} \vert Dz-(Dz)_{B_\rho(x_0)}\vert \operatorname{d}\!\xi\leq c\left( \frac{\rho}{R}\right) ^\beta \fint_{B_R(x_0)} \vert Dz-(Dz)_{B_R(x_0)}\vert \operatorname{d}\!\xi,
\end{equation}
where $0<\rho\leqslant R, \ B_{2R}(x_0)\subset \Omega.$
\end{lemma}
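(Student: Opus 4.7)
\textbf{Proof proposal for Lemma \ref{lemma1.6}.} The plan is to derive the Morrey-type excess decay from a full interior $C^{1,\beta}$-regularity result for weak solutions of the homogeneous equation $\ddiv(a(Dz))=0$, which is Lieberman's classical theory adapted to the Orlicz growth setting. First I would normalize by rescaling $\tilde z(y):= z(x_0+Ry)/R$; this transforms $B_R(x_0)$ into $B_1$ and reduces matters to obtaining the decay on unit balls, the structure conditions \eqref{a(x)1} and \eqref{a(x)3} being preserved because $a=a(\eta)$ has no explicit $x$-dependence. Observe also that, for the same reason, $z-\ell\cdot x$ solves the same equation for any constant vector $\ell\in\R^n$, so in the end we may translate $Dz$ by any constant; this is what will allow the right-hand side of the conclusion to carry the centered mean $(Dz)_{B_R}$.

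Next I would run a Moser iteration scheme on $G(|Dz|)$, exploiting the energy estimate obtained by testing the weak form with suitable cutoffs of $(G(|Dz|)-k)_+$, together with the monotonicity inequality of Lemma \ref{adaog} and the $\Delta_2\cap\nabla_2$ character of $G$ from Lemma \ref{ag}. This yields the local Lipschitz bound
$$\|Dz\|_{L^\infty(B_{R/2})}\ls c\, G^{-1}\!\left(\fint_{B_R}G(|Dz|)\,d\xi\right),$$
which, combined with a standard reverse Hölder / Caccioppoli argument and the translation trick above, can be upgraded to $\|Dz-(Dz)_{B_R}\|_{L^\infty(B_{R/2})}\ls c\fint_{B_R}|Dz-(Dz)_{B_R}|\,d\xi$.

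The heart of the matter is Hölder continuity of $Dz$. Formally differentiating the equation in the direction $x_k$ shows that $v:=\partial_k z$ solves the linear divergence-form equation $\ddiv\bigl(D_\eta a(Dz)\,Dv\bigr)=0$. In the non-degenerate regime $|Dz|\gtrsim c_0>0$, the matrix $D_\eta a(Dz)$ is uniformly elliptic with ellipticity ratio controlled by $i_g,s_g$ via \eqref{a(x)1}, and De Giorgi--Nash--Moser gives $v\in C^{0,\beta}_{\mathrm{loc}}$. In the degenerate regime one compares $Dz$ with a suitable constant using Lemma \ref{adaog} and runs Lieberman's dichotomy: either the oscillation of $Dz$ on $B_{R/2}$ is large relative to its size, in which case one gains a geometric decay directly, or it is small and the non-degenerate linear theory applies on a smaller ball. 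Iterating produces a uniform $\beta=\beta(data)\in(0,1)$ with $[Dz]_{C^{0,\beta}(B_{R/2})}\ls cR^{-\beta}\|Dz\|_{L^\infty(B_R)}$.

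Finally I would convert the Hölder estimate into the stated excess decay: for $0<\rho\ls R/4$,
$$\fint_{B_\rho}|Dz-(Dz)_{B_\rho}|\,d\xi\ls \mathop{\mathrm{osc}}_{B_\rho}Dz\ls c\!\left(\frac{\rho}{R}\right)^{\!\beta}\|Dz-(Dz)_{B_R}\|_{L^\infty(B_{R/2})}\ls c\!\left(\frac{\rho}{R}\right)^{\!\beta}\fint_{B_R}|Dz-(Dz)_{B_R}|\,d\xi,$$
using the upgraded sup bound of the second step. The range $R/4<\rho\ls R$ is handled trivially by enlarging $c$. The main obstacle is the Hölder regularity of $Dz$: the equation degenerates on $\{Dz=0\}$, and pushing Lieberman's dichotomy through in the Orlicz framework, rather than the pure $p$-Laplace one, requires systematic use of the two-sided scalings $\beta^{i_g}\ls g(\beta t)/g(t)\ls \beta^{s_g}$ from Lemma \ref{ag} as replacements for pure power factors in every energy and iteration estimate.
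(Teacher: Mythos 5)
The paper does not prove Lemma \ref{lemma1.6}; it quotes it verbatim from Baroni \cite{b13} (Lemma 4.1), whose proof in turn rests on Lieberman's interior $C^{1,\beta}$-theory for Orlicz-growth equations \cite{l1}. Your outline (Moser iteration for a Lipschitz bound, differentiated equation and Lieberman's dichotomy for $C^{0,\beta}$-regularity of $Dz$, then conversion to excess decay) reconstructs exactly that lineage, so the overall architecture is the right one.

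There is, however, one concrete error in the outline: the claimed translation invariance. You write that $z-\ell\cdot x$ solves the \emph{same} equation for any constant $\ell\in\R^n$. It does not. Setting $w=z-\ell\cdot x$, one has $\ddiv\bigl(a(Dw+\ell)\bigr)=0$, and the shifted vector field $\eta\mapsto a(\eta+\ell)$ fails the structure conditions \eqref{a(x)1}--\eqref{a(x)3}: its ellipticity and growth are governed by $g(|\eta+\ell|)$, which is not comparable to $g(|\eta|)$ when $\eta$ is near $0$ or near $-\ell$ (unlike the linear or uniformly elliptic case, where translation invariance in $\eta$ is genuine). Consequently the step ``upgrade $\|Dz\|_{L^\infty}$ to $\|Dz-(Dz)_{B_R}\|_{L^\infty(B_{R/2})}\lesssim\fint_{B_R}|Dz-(Dz)_{B_R}|$ by translating'' does not go through as stated, and this is precisely the subtle point that makes excess decay for degenerate operators nontrivial. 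The standard fix — which is what Lieberman and Baroni actually do, and which you invoke only in the Hölder-continuity part — is a dichotomy on the relative size of $|(Dz)_{B_R}|$ versus the excess $\fint_{B_R}|Dz-(Dz)_{B_R}|$: in the nondegenerate regime (mean gradient large compared to excess) one linearizes around the mean with uniformly elliptic coefficients $D_\eta a$ near $(Dz)_{B_R}$ and applies linear Campanato/De Giorgi decay; in the degenerate regime (excess dominates) one has $\fint_{B_R}|Dz-(Dz)_{B_R}|\gtrsim\fint_{B_R}|Dz|$ up to a harmless additive term, and a direct decay of $\fint|Dz|$ from the Lipschitz estimate together with \eqref{1.8} suffices. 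Replacing the faulty translation step by this phase analysis makes the argument sound and brings it in line with the cited reference.
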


Our aim is to establish a similar excess decay estimate, incorporating error terms, for solutions to \eqref{u0}. To accomplish this, we utilize a multistep comparison argument to transfer the excess decay estimate from Lemma \ref{lemma1.6} onto solutions of the double obstacle problem.

\begin{lemma}\label{zongjie}
Assume that conditions  \eqref{a(x)1}-\eqref{a(x)3}  are fulfilled, let  $B_{2R}(x_0) \subset \Omega,\psi_1, \psi_2 \in W^{1,G}(\Omega)\cap W^{2,1}(\Omega)$,  $ \ddiv a(x,D\psi_2),  \frac{g(|D\psi_1|)}{|D\psi_1|}|D^{2}\psi_1| \in L^{1}_{loc}(\Omega), $ Let $u\in W^{1,1}(B_R(x_0))$ with $u\geqslant \psi_1$ a.e. be a limit of approximating solutions for $OP(\psi_1, \psi_2; \mu)$ with measure data $\mu\in \mathcal{M}_{b}(B_R(x_0))$ (in the sense of Definition \ref{opdy}).
Then there exists $\beta \in (0,1)$ such that
\begin{eqnarray*}
&&\fint_{B_{\rho}(x_0)}|Du-(Du)_{B_{\rho}(x_0)}|dx\leqslant  c \left( \frac{\rho}{R}\right) ^{\beta}\fint_{B_R(x_0)}|Du-(Du)_{B_R(x_0)}|dx \\
&+& c\left(\frac{R}{\rho} \right) ^n \left[\frac{|\mu|(\overline{B_R(x_0)})}{R^{n-1}} +R\fint_{B_R}|\ddiv a(x,D\psi_2)|dx\right] ^{\frac{1}{i_g}}  \\
&+& c\left(\frac{R}{\rho} \right) ^n\left[R\fint_{B_R(x_0)}\left(\frac{g(|D\psi_1|)}{|D\psi_1|}|D^{2}\psi_1|+1\right)dx\right]^{\frac{1}{i_g}}\\
&+&c\left(\frac{R}{\rho} \right) ^{n}  \omega(R)^{\frac{1}{1+s_g}}\left\lbrace \fint_{B_{R(x_0)}}|Du|dx+ G^{-1}\left[\fint_{B_{R(x_0)}}[G(|D\psi_1|)+G(|\psi_1|)]dx \right]\right\rbrace,
\end{eqnarray*}
where $0<\rho\leqslant R, \ c=c(data)$ and  $\beta$ is as in Lemma \ref{lemma1.6}.
\end{lemma}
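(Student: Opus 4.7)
The proof proceeds by a multi-step comparison argument that successively simplifies the double obstacle problem until we reach a homogeneous equation with constant coefficients, at which point Lemma \ref{lemma1.6} becomes applicable. Concretely, I will build a chain of auxiliary functions $u \leadsto w_1 \leadsto w_2 \leadsto w_3 \leadsto z \leadsto \tilde z$ on $B_R(x_0)$, each transition removing one feature: the upper obstacle, the $\psi_2$-source, the $x$-dependence of the coefficient, the lower obstacle, and the residual right-hand side.

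The five comparison steps use exactly the lemmas already proved in this section. (i) Corollary \ref{coro} compares $u$ with the single-obstacle solution $w_1$ of \eqref{w-v}, producing an error $\lesssim [|\mu|(\overline{B_R(x_0)})/R^{n-1} + R\fint_{B_R}|\ddiv a(x,D\psi_2)|\,dx]^{1/i_g}$. (ii) Lemma \ref{w1-w2} passes from $w_1$ to the homogeneous single-obstacle solution $w_2$ of \eqref{vw1}, with error of the same shape involving only $\ddiv a(x,D\psi_2)$. (iii) Lemma \ref{diyi} freezes the coefficient, yielding $w_3$ which solves the obstacle problem \eqref{vw2} for $\overline{a}_{B_R(x_0)}(\cdot)$, at the cost of $\omega(R)^{1/(1+s_g)}$ times $\fint|Dw_2| + G^{-1}[\fint(G(|D\psi_1|)+G(|\psi_1|))]$. (iv) The frozen-coefficient analogue of Lemma \ref{zabj} converts this obstacle inequality into the equation $-\ddiv \overline{a}_{B_R}(Dz)=-\ddiv \overline{a}_{B_R}(D\psi_1)$; by \eqref{a(x)1} we have $|\ddiv \overline{a}_{B_R}(D\psi_1)| \leq L\,\tfrac{g(|D\psi_1|)}{|D\psi_1|}|D^2\psi_1|$, so the error is $\lesssim [R\fint(\tfrac{g(|D\psi_1|)}{|D\psi_1|}|D^2\psi_1|+1)]^{1/i_g}$. (v) The frozen-coefficient analogue of Lemma \ref{fbjg}, applied with $f=-\ddiv \overline{a}_{B_R}(D\psi_1)$ and $g\equiv 0$, compares $z$ to $\tilde z$ solving $-\ddiv \overline{a}_{B_R}(D\tilde z)=0$, paying an error of the same type. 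Summing these five bounds controls $\fint_{B_R}|Du - D\tilde z|\,dx$.

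Since $\tilde z$ solves $-\ddiv \overline{a}_{B_R}(D\tilde z)=0$, Lemma \ref{lemma1.6} gives $\fint_{B_\rho}|D\tilde z - (D\tilde z)_{B_\rho}| \leq c(\rho/R)^\beta \fint_{B_R}|D\tilde z - (D\tilde z)_{B_R}|$. The final estimate then follows by two triangle inequalities:
\[
\fint_{B_\rho}|Du - (Du)_{B_\rho}|\leq 2\Bigl(\tfrac{R}{\rho}\Bigr)^n \fint_{B_R}|Du - D\tilde z| + 2\fint_{B_\rho}|D\tilde z - (D\tilde z)_{B_\rho}|,
\]
and replacing $\fint_{B_R}|D\tilde z - (D\tilde z)_{B_R}|$ by $2\fint_{B_R}|Du - (Du)_{B_R}| + c\fint_{B_R}|Du - D\tilde z|$ absorbs the decay factor against the first term of the target. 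The main bookkeeping obstacle appears in step (iii), where Lemma \ref{diyi} introduces $\fint|Dw_2|$: this must be traded for $\fint_{B_R}|Du|$ via triangle inequality against the errors of steps (i)--(ii); since those extra pieces get multiplied by the small factor $\omega(R)^{1/(1+s_g)}\le 1$, they are dominated by the errors of steps (i)--(ii) themselves and the exchange is admissible, producing exactly the $\omega(R)^{1/(1+s_g)}\{\fint|Du| + G^{-1}[\cdots]\}$ contribution in the final bound. The $(R/\rho)^n$ pre-factor in front of every comparison term is forced by the elementary passage $\fint_{B_\rho}|\cdot| \leq (R/\rho)^n \fint_{B_R}|\cdot|$ used in the triangle inequality above.
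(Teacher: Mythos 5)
Your proposal is correct and tracks the paper's own proof step for step: the paper also constructs a five-link comparison chain $u\to w_1\to w_2\to w_3\to w_4\to w_5$ using Corollary \ref{coro}, Lemma \ref{w1-w2}, Lemma \ref{diyi}, Lemma \ref{zabj} and Lemma \ref{fbjg} (applied with the frozen coefficient $\overline{a}$, which already satisfies \eqref{a(x)1}--\eqref{a(x)3}, so no separate ``analogue'' needs to be formulated), then applies Lemma \ref{lemma1.6} to the last comparison function and closes with exactly the triangle-inequality manipulations you describe, including the trade of $\fint|Dw_2|$ for $\fint|Du|$ plus already-present error terms. The only cosmetic difference is that the paper performs the frozen-coefficient steps on the shrunk ball $B_{R/2}(x_0)$ with $\overline{a}_{B_{R/2}}$ rather than on $B_R(x_0)$, which only affects harmless constants.
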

\begin{remark}
As we have successfully derived all the requisite comparison estimates for regularized problems at the $L^{1}-$level, our results seamlessly extend to  problems involving measure data.
\end{remark}
\begin{proof}
Without loss of generality we may assume that $x_0=0$ and that $w_1, w_2, w_3,w_4,w_5 \in W^{1,G}(B_R)$ satisfy separately
\begin{equation*}
\left\{\begin{array}{r@{\ \ }c@{\ \ }ll}
&\int_{B_R}&  a(x, Dw_1)\cdot D(v-w_1)dx \geqslant \int_{B_R}  a(x, D\psi_2)\cdot D(v-w_1)dx  \,, \\[0.05cm]
& \mbox{for}&  \forall \ v \in w_1+W_{0}^{1,G}(B_R) \ \mbox{with} \ v\geqslant \psi_1 \  a.e. \ \mbox{in}\ \ B_R \,, \\[0.05cm]
&w_1&\geqslant \psi_1, w_1\geqslant0   \ \ \ \ \ \ a.e. \ \mbox{in}\ \ B_R\,, \\[0.05cm]
&w_1&=u \ \ \ \ \ \ \ \ \ \ \ \ \ \ \ \ \ \mbox{on}\ \ \partial B_R \,,
\end{array}\right.
\end{equation*}

\begin{equation*}
\left\{\begin{array}{r@{\ \ }c@{\ \ }ll}
&\int_{B_R}&  a(x, Dw_2)\cdot D(v-w_2)dx \geqslant0, \,, \\[0.05cm] 
&\mbox{for}& \ \ \forall \ v \in w_2+W_{0}^{1,G}(B_R) \ \mbox{with} \ v\geqslant \psi_1 \  a.e. \ \mbox{in}\ \ B_R \,, \\[0.05cm]
&w_2&\geqslant \psi_1, w_2\geqslant0   \ \ \ \ \ \ a.e. \ \mbox{in}\ \ B_R\,, \\[0.05cm]
&w_2&=w_1 \ \ \ \ \ \ \ \ \ \ \ \ \ \ \ \ \ \mbox{on}\ \ \partial B_R \,,
\end{array}\right.
\end{equation*}

\begin{equation*}
\left\{\begin{array}{r@{\ \ }c@{\ \ }ll}
&\int_{B_\frac{R}{2}}&  \overline{a}_{B_{\frac{R}{2}}}(Dw_3)\cdot D(v-w_3)dx \geqslant0,\  \,, \\[0.05cm]
&\mbox{for}& \ \ \forall \ v \in w_3+W_{0}^{1,G}(B_{\frac{R}{2}}) \ \mbox{with} \ v\geqslant \psi_1 \  a.e. \ \mbox{in}\ \ B_{\frac{R}{2}} \,, \\[0.05cm]
&w_3&\geqslant \psi_1 \ \ \ \ \ \ \ \ \ \ \ \ \ \ \ \ \ a.e. \ \mbox{in}\ \ B_{\frac{R}{2}}\,, \\[0.05cm]
&w_3&=w_2 \ \ \ \ \ \ \ \ \ \ \ \ \ \ \ \ \ \mbox{on}\ \ \partial B_{\frac{R}{2}} \,,
\end{array}\right.
\end{equation*}

\begin{equation*}
\left\{\begin{array}{r@{\ \ }c@{\ \ }ll}
-\operatorname{div}\left(  \overline{a}_{B_{\frac{R}{2}}}(Dw_4)\right)&=&-\operatorname{div}\left( \overline{a}_{B_{\frac{R}{2}}}(D\psi_1)\right) \ \ \ \ \  \ \mbox{in}\ \ B_{\frac{R}{2}} \,, \\[0.05cm]
w_4&=&w_3  \ \  \ \ \ \  \ \ \ \ \ \ \ \ \ \ \ \ \ \ \ \ \ \ \ \  \ \ \mbox{on}\ \ \partial B_{\frac{R}{2}} \,, \\[0.05cm]
\end{array}\right.
\end{equation*}

\begin{equation*}
\left\{\begin{array}{r@{\ \ }c@{\ \ }ll}
-\operatorname{div}\left(  \overline{a}_{B_{\frac{R}{2}}}(Dw_5)\right)&=&0  \ \ \ \ \ \mbox{in}\ \ B_{\frac{R}{2}} \,, \\[0.05cm]
w_5&=&w_4  \ \ \ \mbox{on}\ \ \partial B_{\frac{R}{2}} \,. \\[0.05cm]
\end{array}\right.
\end{equation*}
In order to transfer the double obstacle problems to the single one, we make use of  Corollary \ref{coro} to get  the comparison estimate
\begin{equation}\label{ll2}
\fint_{B_R} |Du-Dw_1| \operatorname{d}\!x \leqslant c\left[\frac{|\mu|(\overline{B_R})}{R^{n-1}}+R\fint_{B_R}|\ddiv a(x,D\psi_2)|dx \right] ^{\frac{1}{i_g}}.
\end{equation}
Using Lemma \ref{w1-w2} to remove the inhomogeneity,
\begin{equation}\label{ll6}
\fint_{B_R} |Dw_1-Dw_2| \operatorname{d}\!x \leqslant c\left[R\fint_{B_R}|\ddiv a(x,D\psi_2)|dx \right] ^{\frac{1}{i_g}}.
\end{equation}
Then we  use  Lemma \ref{diyi} to obtain
\begin{eqnarray}\label{ll5}
\fint_{B_{\frac{R}{2}}}|Dw_2-Dw_3|dx &\leqslant&  c \nonumber \omega(R)^{\frac{1}{1+s_g}}\left\lbrace \fint_{B_{R}}|Dw_2|dx+G^{-1}\left[\fint_{B_{R}}[G(|D\psi_1|)+G(|\psi_1|)]dx \right] \right\rbrace \\  \nonumber
&\leqslant& c \omega(R)^{\frac{1}{1+s_g}}\left\lbrace \fint_{B_{R}}|Du|dx+\left[\frac{|\mu|(\overline{B_R})}{R^{n-1}} +R\fint_{B_R}|\ddiv a(x,D\psi_2)|dx\right] ^{\frac{1}{i_g}}\right\rbrace \\
&+&  c \omega(R)^{\frac{1}{1+s_g}}\left\lbrace G^{-1}\left[\fint_{B_{R}}[G(|D\psi_1|)+G(|\psi_1|)]dx \right] \right\rbrace.
\end{eqnarray}
In the subsequent step, to effectively transition to an obstacle-free problem, we make use of Lemma \ref{zabj} to  to get
\begin{equation}\label{l3}
\fint_{B_{\frac{R}{2}}}|Dw_3-Dw_4|dx\leqslant c\left(R\fint_{B_{\frac{R}{2}}}(|\operatorname{div}\left(  \overline{a}_{B_{\frac{R}{2}}}(D\psi_1)\right)|+1)dx\right)^{\frac{1}{i_g}}.
\end{equation}
We now proceed to simplify the problem by reducing it to a homogeneous equation.  An application of  Lemma \ref{fbjg} with $f=-\operatorname{div}\left(  \overline{a}_{B_{\frac{R}{2}}}(D\psi_1)\right)$ and $g=0$ implies that
\begin{equation}\label{l4}
\fint_{B_{\frac{R}{2}}}|Dw_4-Dw_5|dx\leqslant c\left(R\fint_{B_{\frac{R}{2}}}(|\operatorname{div}\left(  \overline{a}_{B_{\frac{R}{2}}}(D\psi_1)\right)|+1)dx\right)^{\frac{1}{i_g}}.
\end{equation}
Applying Lemma \ref{lemma1.6},  there exists $\beta \in (0,1)$ such that
\begin{eqnarray}\label{ll1}
 \nonumber  \fint_{B_\rho} \vert Dw_5-(Dw_5)_{B_\rho}\vert dx &\leqslant & c\left( \frac{\rho}{R}\right) ^\beta \fint_{B_{\frac{R}{2}}} \vert Dw_5-(Dw_5)_{B_{\frac{R}{2}}}\vert dx \\
&\leqslant & c\left( \frac{\rho}{R}\right) ^\beta \fint_{B_{\frac{R}{2}}}|Dw_5-Du| +\vert Du-(Du)_{B_R}\vert dx.
\end{eqnarray}
Finally combining  \eqref{ll2}-\eqref{ll1} yields
\begin{eqnarray*}
&&\fint_{B_{\rho}}|Du-(Du)_{B_{\rho}}|dx \leqslant \fint_{B_{\rho}}|Du-(Dw_5)_{B_{\rho}}|dx \\
&\leqslant& \fint_{B_{\rho}}(|Du-Dw_1|+|Dw_1-Dw_2|+|Dw_2-Dw_3| \\
&+&|Dw_3-Dw_4|+|Dw_4-Dw_5|+\vert Dw_5-(Dw_5)_{B_\rho}\vert )dx  \\
&\leqslant& c\left(\frac{R}{\rho} \right) ^n \left[ \left[\frac{|\mu|(\overline{B_{\frac{R}{2}})}}{R^{n-1}} +R\fint_{B_R}|\ddiv a(x,D\psi_2)|dx\right] ^{\frac{1}{i_g}}+\left(R\fint_{B_{\frac{R}{2}}}(|\operatorname{div}\left(  \overline{a}_{B_{\frac{R}{2}}}(D\psi_1)\right)|+1)dx\right)^{\frac{1}{i_g}}\right] \\
&+&c\left(\frac{R}{\rho} \right) ^{n}  \omega(R)^{\frac{1}{1+s_g}}\left\lbrace \fint_{B_{R}}|Du|dx+\left[\frac{|\mu|(\overline{B_R})}{R^{n-1}} +R\fint_{B_R}|\ddiv a(x,D\psi_2)|dx\right] ^{\frac{1}{i_g}} \right\rbrace \\
&+&c\left(\frac{R}{\rho} \right) ^{n}  \omega(R)^{\frac{1}{1+s_g}}G^{-1}\left[\fint_{B_{R}}[G(|D\psi_1|)+G(|\psi_1|)]dx \right] \\
&+& c\left( \frac{\rho}{R}\right) ^\beta \left[ \fint_{B_{\frac{R}{2}}}(|Dw_5-Du|+|Du-(Du)_{B_R}|)dx\right]  \\
&\leqslant& c \left( \frac{\rho}{R}\right) ^\beta  \fint_{B_R}|Du-(Du)_{B_R}|dx \\
&+& c\left(\frac{R}{\rho} \right) ^n \left[ \left[\frac{|\mu|(\overline{B_R})}{R^{n-1}} +R\fint_{B_R}|\ddiv a(x,D\psi_2)|dx\right] ^{\frac{1}{i_g}}+\left(R\fint_{B_R}\left(\frac{g(|D\psi_1|)}{|D\psi_1|}|D^{2}\psi_1| +1\right)dx\right)^{\frac{1}{i_g}}\right] \\
&+&c\left(\frac{R}{\rho} \right) ^{n}  \omega(R)^{\frac{1}{1+s_g}}\left\lbrace \fint_{B_{R}}|Du|dx+ G^{-1}\left[\fint_{B_{R}}[G(|D\psi_1|)+G(|\psi_1|)]dx \right] \right\rbrace,
\end{eqnarray*}
where we used the fact $\omega \leqslant2L$ and the condition  \eqref{a(x)1} in the last step.
\end{proof}

\section{The proof of gradient estimates }\label{section5}
The focus of this section lies in deriving pointwise and oscillation estimates for the gradients of solutions utilizing the sharp maximal function estimates. We initiate the analysis by establishing a pointwise estimate of the fractional maximal operator through the utilization of precise iteration methods.
\begin{proof}[Proof of Theorem \ref{Th1}]
\textup{\textbf{Proof of \eqref{1.11}}}
We  define
\begin{equation}
B_i:=B\left( x,\frac{R}{H^i}\right )=B(x,R_i),\ \ \mbox{for}\ \ \ i=0,1,2,...,\ \ \nonumber
\end{equation}
\begin{equation}
A_i:=\fint_{B_i}|Du-(Du)_{B_i}|dx,\ \ k_i:=|(Du)_{B_i}-S|,\ \ \ S\in \mathbb{R}^n.  \nonumber
\end{equation}
We take $H=H(data)>1$ large enough to have
\begin{equation}
c\left(\frac{1}{H} \right) ^\beta\leq \frac{1}{4} \nonumber
\end{equation}
with $\beta$  as in Lemma \ref{lemma1.6} and using Lemma \ref{zongjie}  gives
\begin{eqnarray*}
&&\fint_{B_{i+1}}|Du-(Du)_{B_{i+1}}|\operatorname{d}\! \xi \\ &\leqslant& \frac{1}{4}\fint_{B_{i}}|Du-(Du)_{B_{i}}|\operatorname{d}\!\xi+cH^{n} \left\lbrace  \left[\frac{|\mu|(\overline{B_i})}{R_{i}^{n-1}} \right] ^{\frac{1}{i_g}}+\left[\frac{D\Psi_1(B_i)}{R_{i}^{n-1}} \right] ^{\frac{1}{i_g}}+\left[\frac{D\Psi_2(B_i)}{R_{i}^{n-1}} \right] ^{\frac{1}{i_g}}\right\rbrace  \\
&+&cH^{n}  \omega(R_i)^{\frac{1}{1+s_g}}\left\lbrace \fint_{B_{i}}|Du|d\xi+ G^{-1}\left[\fint_{B_{i}}[G(|D\psi_1|)+G(|\psi_1|)]d\xi \right]\right\rbrace.
\end{eqnarray*}
Our next step involves reducing the value of $R_0$-in a way depending only on $data$ and $\omega(\cdot)$- to get
\begin{equation*}
cH^{n}\omega(R_i)^{\frac{1}{1+s_g}}\leqslant cH^{n}\omega(R_0)^{\frac{1}{1+s_g}}\leqslant \frac{1}{4},
\end{equation*}
which together with  the following estimate
\begin{equation*}
\fint_{B_i}|Du|d\xi \leqslant \fint_{B_i}|Du-(Du)_{B_i}|d\xi+k_i+|S|,
\end{equation*}
we derive that
\begin{eqnarray}\label{1.19}
\nonumber A_{i+1}&\leqslant& \frac{1}{2}A_i+c\left\lbrace \left[\frac{|\mu|(\overline{B_i})}{R_i^{n-1}} \right] ^{\frac{1}{i_g}} +\left[\frac{D\Psi_1(B_i)}{R_i^{n-1}} \right] ^{\frac{1}{i_g}}+\left[\frac{D\Psi_2(B_i)}{R_{i}^{n-1}} \right] ^{\frac{1}{i_g}}\right\rbrace  \\
&+&c[\omega(R_i)]^{\frac{1}{1+s_g}}\left\lbrace k_i+|S|+ G^{-1}\left[\fint_{B_{i}}[G(|D\psi_1|)+G(|\psi_1|)]d\xi \right]\right\rbrace
\end{eqnarray}
whenever $ i\geqslant0 $.  On the other hand, we calculate
\begin{eqnarray*}
|k_{i+1}-k_i|&\leq&|(Du)_{B_{i+1}}-(Du)_{B_i}| \\
&\leq& \fint_{B_{i+1}}|Du-(Du)_{B_{i}}|\operatorname{d}\!\xi\\
&\leq& H^n \fint_{B_i}|Du-(Du)_{B_{i}}|\operatorname{d}\!\xi=H^nA_i,
\end{eqnarray*}
from which we see that  for $ m\in \mathbb{N} $,
\begin{equation}\label{1.20}
k_{m+1}=\sum_{i=0}^m(k_{i+1}-k_i)+k_0\leq H^n\sum_{i=0}^mA_i+k_0.
\end{equation}
In this step, we perform a summation of \eqref{1.19} across the range of indices $ i \in \left\lbrace  0,...,m-1\right\rbrace$, leading us to derive the subsequent inequality
\begin{eqnarray*}
\sum_{i=1}^mA_i&\leq& \frac{1}{2}\sum_{i=0}^{m-1}A_i+c\sum_{i=0}^{m-1}\left\lbrace \left[\frac{|\mu|(\overline{B_i})}{R_i^{n-1}} \right] ^{\frac{1}{i_g}} +\left[\frac{D\Psi_1(B_i)}{R_i^{n-1}} \right] ^{\frac{1}{i_g}}+\left[\frac{D\Psi_2(B_i)}{R_{i}^{n-1}} \right] ^{\frac{1}{i_g}}\right\rbrace  \\
&+&c\sum_{i=0}^{m-1}[\omega(R_i)]^{\frac{1}{1+s_g}}\left\lbrace k_i+|S|+ G^{-1}\left[\fint_{B_{i}}[G(|D\psi_1|)+G(|\psi_1|)]d\xi \right] \right\rbrace.
\end{eqnarray*}
Consequently,
\begin{eqnarray*}
\sum_{i=1}^mA_i&\leq& A_0+2c\sum_{i=0}^{m-1}\left\lbrace \left[\frac{|\mu|(\overline{B_i})}{R_i^{n-1}} \right] ^{\frac{1}{i_g}} +\left[\frac{D\Psi_1(B_i)}{R_i^{n-1}} \right] ^{\frac{1}{i_g}}+\left[\frac{D\Psi_2(B_i)}{R_{i}^{n-1}} \right] ^{\frac{1}{i_g}}\right\rbrace  \\
&+&c\sum_{i=0}^{m-1}[\omega(R_i)]^{\frac{1}{1+s_g}}\left\lbrace k_i+|S|+ G^{-1}\left[\fint_{B_{i}}[G(|D\psi_1|)+G(|\psi_1|)]d\xi \right] \right\rbrace.
\end{eqnarray*}
For every integer $m\geqslant1$ we employ \eqref{1.20} to gain
\begin{eqnarray}\label{2.3}
\nonumber k_{m+1}&\leq& cA_0+ck_0+c\sum_{i=0}^{m-1}\left\lbrace \left[\frac{|\mu|(\overline{B_i})}{R_i^{n-1}} \right] ^{\frac{1}{i_g}} +\left[\frac{D\Psi_1(B_i)}{R_i^{n-1}} \right] ^{\frac{1}{i_g}}+\left[\frac{D\Psi_2(B_i)}{R_{i}^{n-1}} \right] ^{\frac{1}{i_g}}\right\rbrace  \\
&+&c\sum_{i=0}^{m-1}[\omega(R_i)]^{\frac{1}{1+s_g}}\left\lbrace k_i+|S|+ G^{-1}\left[\fint_{B_{i}}[G(|D\psi_1|)+G(|\psi_1|)]d\xi \right] \right\rbrace.
\end{eqnarray}
Taking into consideration the explicit definition of $A_0$, we obtain
\begin{eqnarray}\label{a0guji}
\nonumber &&k_{m+1} \\
&\leq& c\fint_{B_R}|Du-(Du)_{B_R}|+|Du-S|\operatorname{d}\!\xi \\
&+&c\sum_{i=0}^{m-1}\left\lbrace \left[\frac{|\mu|(\overline{B_i})}{R_i^{n-1}} \right] ^{\frac{1}{i_g}} +\left[\frac{D\Psi_1(B_i)}{R_i^{n-1}} \right] ^{\frac{1}{i_g}}+\left[\frac{D\Psi_2(B_i)}{R_{i}^{n-1}} \right] ^{\frac{1}{i_g}}\right\rbrace  \\
&+&c\sum_{i=0}^{m-1}[\omega(R_i)]^{\frac{1}{1+s_g}}\left\lbrace k_i+|S|+ G^{-1}\left[\fint_{B_{i}}[G(|D\psi_1|)+G(|\psi_1|)]d\xi \right] \right\rbrace
\end{eqnarray}
for every $m\geqslant0.$
By choosing $S=0$  in the inequality mentioned earlier and subsequently multiplying both sides by $R_{m+1}^{1-\alpha}$
,  while considering the given restrictions
 $\alpha\in[0,1]$ and $R_{m+1}\leq R_i$ for $0\leqslant i \leqslant m+1$,  we obtain
\begin{eqnarray*}
&&R_{m+1}^{1-\alpha}k_{m+1} \\
&\leq& cR^{1-\alpha} \fint_{B_R}|Du|\operatorname{d}\!\xi+c\sum_{i=0}^{m-1}R_i^{1-\alpha}\left\lbrace \left[\frac{|\mu|(\overline{B_i})}{R_i^{n-1}} \right] ^{\frac{1}{i_g}} +\left[\frac{D\Psi_1(B_i)}{R_i^{n-1}} \right] ^{\frac{1}{i_g}}+\left[\frac{D\Psi_2(B_i)}{R_{i}^{n-1}} \right] ^{\frac{1}{i_g}}\right\rbrace  \\
&+&c\sum_{i=0}^{m-1}R_i^{1-\alpha}[\omega(R_i)]^{\frac{1}{1+s_g}}\left\lbrace k_i+ G^{-1}\left[\fint_{B_{i}}[G(|D\psi_1|)+G(|\psi_1|)]d\xi \right] \right\rbrace
\end{eqnarray*}
and therefore
\begin{eqnarray}\label{1.22}
\nonumber &&R_{m+1}^{1-\alpha}k_{m+1}\\
&\leq& c\sum_{i=0}^{m}\left\lbrace \left[\frac{|\mu|(\overline{B_i})}{R_i^{n-1-i_g(1-\alpha)}} \right] ^{\frac{1}{i_g}}+\left[\frac{D\Psi_1(B_i)}{R_i^{n-1-i_g(1-\alpha)}} \right] ^{\frac{1}{i_g}}+\left[\frac{D\Psi_2(B_i)}{R_i^{n-1-i_g(1-\alpha)}} \right] ^{\frac{1}{i_g}}\right\rbrace \\
&+&cR^{1-\alpha} \fint_{B_R}|Du|\operatorname{d}\!\xi+c\sum_{i=0}^{m}R_i^{1-\alpha}[\omega(R_i)]^{\frac{1}{1+s_g}}\left\lbrace k_i+ G^{-1}\left[\fint_{B_{i}}[G(|D\psi_1|)+G(|\psi_1|)]d\xi \right] \right\rbrace.
\end{eqnarray}
By employing the definition of Wolff potential, we proceed to estimate the final term on the right-hand side in equation \eqref{1.22}, yielding
\begin{eqnarray}\label{1.23}
\nonumber&&\sum_{i=0}^{\infty}\left[\frac{|\mu|(\overline{B_i})}{R_i^{n-1-i_g(1-\alpha)}} \right] ^{\frac{1}{i_g}} \\
&\leq& \frac{c}{\log2}\int_R^{2R}\left[\frac{|\mu|(\overline{B_\rho})}{\rho^{n-1-i_g(1-\alpha)}} \right]  ^{\frac{1}{i_g}}\frac{\operatorname{d}\!\rho}{\rho}\nonumber
+\sum_{i=0}^{\infty}\frac{c}{\log H}\int_{R_{i+1}}^{R_i}\left[\frac{|\mu|(\overline{B_\rho})}{\rho^{n-1-i_g(1-\alpha)}} \right]  ^{\frac{1}{i_g}}\frac{\operatorname{d}\!\rho}{\rho}\\
&\leq& cW_{1-\alpha+\frac{\alpha}{i_g+1},i_g+1}^{\mu}(x,2R).
\end{eqnarray}
At the last step, we utilize the assumption $|\mu|(B_{2R})<+\infty$ to deduce that the condition  $|\mu|(\partial B_{\rho})>0$ can hold for only a countable number of radii
$\rho \in (R,2R)$.
Likewise, we have
\begin{equation*}
\sum_{i=0}^{\infty}\left[\frac{D\Psi_1(B_i)}{R_i^{n-1-i_g(1-\alpha)}} \right] ^{\frac{1}{i_g}} \leqslant cW_{1-\alpha+\frac{\alpha}{i_g+1},i_g+1}^{[\psi_1]}(x,2R),
\end{equation*}
\begin{equation*}
\sum_{i=0}^{\infty}\left[\frac{D\Psi_2(B_i)}{R_i^{n-1-i_g(1-\alpha)}} \right] ^{\frac{1}{i_g}} \leqslant cW_{1-\alpha+\frac{\alpha}{i_g+1},i_g+1}^{[\psi_2]}(x,2R),
\end{equation*}
\begin{equation*}
\sum_{i=0}^{\infty}\left[\omega(R_i)\right] ^{\frac{1}{1+s_g}}\leqslant c\int_{0}^{2R}[\omega(\rho)]^{\frac{1}{1+s_g}}\frac{d\rho}{\rho},
\end{equation*}
\begin{eqnarray*}
\sum_{i=0}^{\infty}R_{i}^{1-\alpha}\left[\omega(R_i)\right] ^{\frac{1}{1+s_g}}G^{-1}\left[\fint_{B_{i}}[G(|D\psi|)+G(|\psi|)]d\xi \right] \\
\leqslant c\int_{0}^{2R}[\omega(\rho)]^{\frac{1}{1+s_g}}G^{-1}\left[\fint_{B_{\rho}}[G(|D\psi_1|)+G(|\psi_1|)]d\xi \right]\frac{d\rho}{\rho^{\alpha}}.
\end{eqnarray*}
Consequently,
\begin{equation}\label{2.7}
R_{m+1}^{1-\alpha}k_{m+1}\leqslant cM+c\sum_{i=0}^{m}R_{i}^{1-\alpha}k_{i}\left[\omega(R_i)\right] ^{\frac{1}{1+s_g}},
\end{equation}
where
\begin{eqnarray*}
M:&=&c\left[ W_{1-\alpha+\frac{\alpha}{i_g+1},i_g+1}^{\mu}(x,2R)+W_{1-\alpha+\frac{\alpha}{i_g+1},i_g+1}^{[\psi_1]}(x,2R)+W_{1-\alpha+\frac{\alpha}{i_g+1},i_g+1}^{[\psi_2]}(x,2R)\right]  \\
&+&cR^{1-\alpha}\fint_{B_R}|Du|d\xi +c\int_{0}^{2R}[\omega(\rho)]^{\frac{1}{1+s_g}}G^{-1}\left[\fint_{B_{\rho}}[G(|D\psi_1|)+G(|\psi_1|)]d\xi \right]\frac{d\rho}{\rho^{\alpha}}.
\end{eqnarray*}
Using the same arguments used in \cite{xiong2}, we  prove by induction that
\begin{equation}\label{mguji}
R_{m+1}^{1-\alpha}k_{m+1}\leqslant (c+c^{*})M
\end{equation}
holds for every $m\geqslant 0$, some positive constants $c,c^{*}>1$.

Now  we define
\begin{equation*}
C_m:=R_m^{1-\alpha}A_m=R_m^{1-\alpha}\fint_{B_m}|Du-(Du)_{B_m}|d\xi
\end{equation*}
\begin{equation*}
h_m:=\fint_{B_m}|Du|d\xi.
\end{equation*}
Then it's  easy to obtain
\begin{eqnarray*}
R_m^{1-\alpha}h_m&=&R_m^{1-\alpha}\fint_{B_m}|Du|d\xi \\
&\leq& R_m^{1-\alpha}\fint_{B_m}|Du-(Du)_{B_m}|+|(Du)_{B_m}|d\xi  \\
&=&R_m^{1-\alpha}k_m+C_m \\
&\leq& CM+C_m
\end{eqnarray*}
with $M$ as in \eqref{2.7}. Consequently, our focus narrows down to finding an upper bound for $C_m$. By employing  \eqref{1.23} and recalling the definition of
$M$ in  \eqref{2.7}, we obtain
\begin{eqnarray*}
\left[\frac{|\mu|(\overline{B_i})}{R_i^{n-1}} \right] ^{\frac{1}{i_g}}
&\leq& CR_i^{\alpha-1}W_{1-\alpha+\frac{\alpha}{i_g+1},i_g+1}^{\mu}(x,2R) \\
&\leq&  CR_i^{\alpha-1}M.
\end{eqnarray*}
Likewise, we have
\begin{equation*}
\left[\frac{D\Psi_1(B_i)}{R_{i}^{n-1}} \right] ^{\frac{1}{i_g}}\leqslant cR_{i}^{\alpha-1}M,
\end{equation*}
\begin{equation*}
\left[\frac{D\Psi_2(B_i)}{R_{i}^{n-1}} \right] ^{\frac{1}{i_g}}\leqslant cR_{i}^{\alpha-1}M,
\end{equation*}
\begin{equation*}
\left[\omega(R_i) \right] ^{\frac{1}{1+s_g}} G^{-1}\left[\fint_{B_{i}}[G(|D\psi_1|)+G(|\psi_1|)]d\xi \right] \leqslant cR_{i}^{\alpha-1}M.
\end{equation*}
Consequently, with reference to  \eqref{1.19}, we establish
\begin{equation*}
A_{m+1}\leq \frac{1}{2}A_m+cR_m^{\alpha-1}M +ck_m.
\end{equation*}
Subsequently, we employ  $\eqref{mguji}$ to obtain
\begin{equation*}
A_{m+1}\leq \frac{1}{2}A_m+cR_m^{\alpha-1}M.
\end{equation*}
then multiply both sides by $R_{m+1}^{1-\alpha}$, we get
\begin{eqnarray}\label{1.25}
C_{m+1}&\leq& \frac{1}{2}\left( \frac{R_{m+1}}{R_m}\right) ^{1-\alpha}C_m \nonumber
+C\left( \frac{R_{m+1}}{R_m}\right) ^{1-\alpha}M \\
&\leq& \frac{1}{2}C_m+C_1M.
\end{eqnarray}
Then applying the same arguments used in \cite{xiong2}, we  prove by induction that
\begin{equation}\label{2.9}
C_m\leq 2C_1M
\end{equation}
holds whenever $m\geqslant0$.
Let us now consider the case where
$r\leqslant R$ and select the integer
$i\geqslant0$ satisfying $R_{i+1}\leqslant r\leqslant R_i$, then we have
\begin{eqnarray*}
r^{1-\alpha}\fint_{B_r}|Du|\operatorname{d}\!\xi
&\leq& \left(\frac{R_i}{R_{i+1}} \right) ^nR_i^{1-\alpha}\fint_{B_i}|Du|\operatorname{d}\!\xi \\
&\leq& CH^nR_i^{1-\alpha}h_i \\
&\leq& CM.
\end{eqnarray*}
With the definition of  $M$ and the restricted maximal operator in mind, we obtain in turn
\begin{eqnarray*}
&&M_{1-\alpha,R}(|Du|)(x)\\
&\leqslant &c\left[ W_{1-\alpha+\frac{\alpha}{i_g+1},i_g+1}^{\mu}(x,2R)+W_{1-\alpha+\frac{\alpha}{i_g+1},i_g+1}^{[\psi_1]}(x,2R)+W_{1-\alpha+\frac{\alpha}{i_g+1},i_g+1}^{[\psi_2]}(x,2R)\right]  \\
&+&cR^{1-\alpha}\fint_{B_R}|Du|d\xi+c\int_{0}^{2R}[\omega(\rho)]^{\frac{1}{1+s_g}}G^{-1}\left[\fint_{B_{\rho}}[G(|D\psi_1|)+G(|\psi_1|)]d\xi \right]\frac{d\rho}{\rho^{\alpha}}.
\end{eqnarray*}
Moreover, because of
\begin{equation*}
M_{\alpha,R}^{\#}(u)(x)\leqslant M_{1-\alpha,R}(|Du|)(x),
\end{equation*}
then we have
\begin{eqnarray*}
&&M_{\alpha,R}^{\#}(u)(x)+M_{1-\alpha,R}(Du)(x)\\
&\leqslant& c\left[ W_{1-\alpha+\frac{\alpha}{i_g+1},i_g+1}^{\mu}(x,2R)+W_{1-\alpha+\frac{\alpha}{i_g+1},i_g+1}^{[\psi_1]}(x,2R)+W_{1-\alpha+\frac{\alpha}{i_g+1},i_g+1}^{[\psi_2]}(x,2R)\right]  \\
&+&cR^{1-\alpha}\fint_{B_R}|Du|d\xi+c\int_{0}^{2R}[\omega(\rho)]^{\frac{1}{1+s_g}}G^{-1}\left[\fint_{B_{\rho}}[G(|D\psi_1|)+G(|\psi_1|)]d\xi \right]\frac{d\rho}{\rho^{\alpha}}.
\end{eqnarray*}
\textup{\textbf{Proof of \eqref{1.122}}}
We define
\begin{equation}\label{a}
\widehat{A}_i:=R_i^{-\alpha}\fint_{B_i}|Du-(Du)_{B_i}|\operatorname{d}\!\xi
\end{equation}
Using Lemma \ref{zongjie} (multiply both sides by $R_{i+1}^{-\alpha}$) we obtain
\begin{eqnarray*}
&&\widehat{A}_{i+1} \\
&\leqslant &c\left( \frac{R_{i+1}}{R_{i}}\right) ^{\beta-\alpha}\widehat{A}_i+c \left( \frac{R_{i}}{R_{i+1}}\right) ^{n+\alpha}\left\lbrace \left[ \frac{|\mu|(\overline{B_i})}{R_i^{n-1+\alpha i_g}}\right] ^{\frac{1}{i_g}}+ \left[ \frac{D\Psi_1(B_i)}{R_i^{n-1+\alpha i_g}}\right] ^{\frac{1}{i_g}}+\left[ \frac{D\Psi_2(B_i)}{R_i^{n-1+\alpha i_g}}\right] ^{\frac{1}{i_g}}\right\rbrace \\
&+&c\left( \frac{R_{i}}{R_{i+1}}\right) ^{n+\alpha}\frac{1}{R_i^{\alpha}} \omega(R_i)^{\frac{1}{1+s_g}}\left\lbrace \fint_{B_{i}}|Du|d\xi+ G^{-1}\left[\fint_{B_{i}}[G(|D\psi_1|)+G(|\psi_1|)]d\xi \right] \right\rbrace.
\end{eqnarray*}
We choose a value of  $H=H(n,i_a,s_a,\widehat{\alpha},\beta)>1$ that is sufficiently large to obtain
\begin{equation*}
c\left( \frac{R_{i+1}}{R_{i}}\right) ^{\beta-\alpha}
=c\left( \frac{1}{H}\right)^{\beta-\alpha}
\leqslant c\left( \frac{1}{H}\right)^{\beta-\widehat{\alpha} }
\leqslant\frac{1}{2}
\end{equation*}
and
\begin{equation*}
 \frac{|\mu|(\overline{B_i})}{R_i^{n-1+\alpha i_g}}\leqslant H^{n-1+\alpha i_g}\frac{|\mu|(B_{i-1})}{R_{i-1}^{n-1+\alpha i_g}}.
\end{equation*}
Due to assumption \eqref{wtiaojian}, we have
$$\frac{[\omega(R_i)]^{\frac{1}{1+s_g}}}{R_{i}^{\alpha}}\leqslant \frac{[\omega(R_i)]^{\frac{1}{1+s_g}}}{R_{i}^{\widehat{\alpha}}}\leqslant c_0,$$
then according to  the definition of restricted maximal operator, we derive that
\begin{eqnarray*}
&&\widehat{A}_{i+1}\\
&\leqslant& \frac{1}{2}\widehat{A}_{i}+c\left\lbrace \left[ M_{1-\alpha i_g,R}(\mu)(x)\right] ^{\frac{1}{i_g}}+\left[ \overline{M}_{1-\alpha i_g,R}(\psi_1)(x)\right] ^{\frac{1}{i_g}}+\left[ \overline{M}_{1-\alpha i_g,R}(\psi_2)(x)\right] ^{\frac{1}{i_g}}+\fint_{B_{i}}|Du|d\xi \right\rbrace \\
&+&c\frac{1}{R_i^{\alpha}} \omega(R_i)^{\frac{1}{1+s_g}}\left\lbrace  G^{-1}\left[\fint_{B_{i}}[G(|D\psi_1|)+G(|\psi_1|)]d\xi\right] \right\rbrace
\end{eqnarray*}
for every $i\geqslant0$.
Moreover, from the case $\alpha=1$ of inequality \eqref{1.11}, we have
\begin{eqnarray}\label{mwanguji}
\nonumber &&\fint_{B_i}|Du|d\xi \\ \non
&\leqslant& c\left[ \fint_{B_R}|Du|d\xi+W_{\frac{1}{i_g+1},i_g+1}^{\mu}(x,2R)+W_{ \frac{1}{i_g+1},i_g+1}^{[\psi_1]}(x,2R)+W_{ \frac{1}{i_g+1},i_g+1}^{[\psi_2]}(x,2R)\right]  \\ \nonumber
&+&c\int_{0}^{2R}[\omega(\rho)]^{\frac{1}{1+s_g}}G^{-1}\left[\fint_{B_{\rho}}[G(|D\psi_1|)+G(|\psi_1|)]d\xi \right]\frac{d\rho}{\rho} \\
&:=&cM^{*}.
\end{eqnarray}
Therefore, by combining with the two previous inequalities, we obtain
\begin{eqnarray*}
&&\widehat{A}_{i+1}\\
&\leqslant& \frac{1}{2}\widehat{A}_{i}+c\left\lbrace \left[ M_{1-\alpha i_g,R}(\mu)(x)\right] ^{\frac{1}{i_g}}+\left[ \overline{M}_{1-\alpha i_g,R}(\psi_1)(x)\right] ^{\frac{1}{i_g}}+\left[ \overline{M}_{1-\alpha i_g,R}(\psi_2)(x)\right] ^{\frac{1}{i_g}}+M^{*}\right\rbrace \\
&+&c\frac{1}{R_i^{\alpha}} \omega(R_i)^{\frac{1}{1+s_g}}\left\lbrace  G^{-1}\left[\fint_{B_{i}}[G(|D\psi_1|)+G(|\psi_1|)]d\xi \right] \right\rbrace.
\end{eqnarray*}
Iterating the previous relation, we conclude
\begin{eqnarray*}
&& \widehat{A}_{i}\leqslant 2^{-i} \widehat{A}_{0}\\
&+&c\sum_{j=0}^{i-1}2^{-j} \left\lbrace \left[ M_{1-\alpha i_g,R}(\mu)(x)\right] ^{\frac{1}{i_g}}+\left[ \overline{M}_{1-\alpha i_g,R}(\psi_1)(x)\right] ^{\frac{1}{i_g}}+\left[ \overline{M}_{1-\alpha i_g,R}(\psi_2)(x)\right] ^{\frac{1}{i_g}}+M^{*}\right\rbrace  \\
 &+& c\sum_{j=0}^{i-1}2^{-j}  \frac{1}{R_j^{\alpha}} \omega(R_j)^{\frac{1}{1+s_g}}\left\lbrace   G^{-1}\left[\fint_{B_{j}}[G(|D\psi_1|)+G(|\psi_1|)]d\xi \right] \right\rbrace
\end{eqnarray*}
holds for every $i\geqslant1$.
Subsequently, analogous to  \eqref{1.23},  we derive
\begin{eqnarray*}
&&\sum_{j=0}^{i-1}2^{-j}  \frac{1}{R_j^{\alpha}} \omega(R_j)^{\frac{1}{1+s_g}}\left\lbrace   G^{-1}\left[\fint_{B_{j}}[G(|D\psi_1|)+G(|\psi_1|)]d\xi \right] \right\rbrace  \\
&\leqslant &c\int_{0}^{2R}[\omega(\rho)]^{\frac{1}{1+s_g}}G^{-1}\left[\fint_{B_{\rho}}[G(|D\psi_1|)+G(|\psi_1|)]d\xi \right]\frac{d\rho}{\rho^{1+\alpha}}.
\end{eqnarray*}
By recalling  \eqref{a}  and  \eqref{1.8}, it is straightforward to deduce that
\begin{eqnarray*}
&& \sup_{i\geqslant0}\widehat{A}_{i}\\
&\leqslant& c\left\lbrace R^{-\alpha}\fint_{B_R}|Du|\operatorname{d}\!\xi+ \left[  M_{1-\alpha i_g,R}(\mu)(x)\right] ^{\frac{1}{i_g}}+\left[  \overline{M}_{1-\alpha i_g,R}(\psi_1)(x)\right] ^{\frac{1}{i_g}}
+\left[  \overline{M}_{1-\alpha i_g,R}(\psi_2)(x)\right] ^{\frac{1}{i_g}}\right\rbrace  \\
 &+& c\left\lbrace M^{*}+\int_{0}^{2R}[\omega(\rho)]^{\frac{1}{1+s_g}}G^{-1}\left[\fint_{B_{\rho}}[G(|D\psi_1|)+G(|\psi_1|)]d\xi \right]\frac{d\rho}{\rho^{1+\alpha}}
\right\rbrace .
\end{eqnarray*}
For every $ \rho\in (0,R]$, let $ i\in \mathbb{N}$ be  such that $ R_{i+1}<\rho\leqslant R_{i} $,  then we gain
\begin{eqnarray*}
\rho^{-\alpha}\fint_{B_{\rho}}\vert Du-(Du)_{B_{\rho}}\vert\operatorname{d}\!\xi &\leqslant & c\frac{R_{i}^n}{\rho^n}R_{i+1}^{-\alpha}\fint_{B_i}\vert Du-(Du)_{B_i}\vert\operatorname{d}\!\xi  \\
&\leqslant & c\sup_{i\geqslant0}\widehat{A}_{i},
\end{eqnarray*}
from which we derive that
\begin{eqnarray*}
&&M_{\alpha,R}^{\#}(Du)(x)\\
&\leqslant& c\left\lbrace R^{-\alpha}\fint_{B_R}|Du|\operatorname{d}\!\xi+ \left[  M_{1-\alpha i_g,R}(\mu)(x)\right] ^{\frac{1}{i_g}}+\left[  \overline{M}_{1-\alpha i_g,R}(\psi_1)(x)\right] ^{\frac{1}{i_g}}+\left[  \overline{M}_{1-\alpha i_g,R}(\psi_2)(x)\right] ^{\frac{1}{i_g}}\right\rbrace  \\
&+&c\left\lbrace W_{\frac{1}{i_g+1},i_g+1}^{\mu}(x,2R)+W_{ \frac{1}{i_g+1},i_g+1}^{[\psi_1]}(x,2R)+W_{ \frac{1}{i_g+1},i_g+1}^{[\psi_2]}(x,2R)\right\rbrace  \\
&+&c\int_{0}^{2R}[\omega(\rho)]^{\frac{1}{1+s_g}}G^{-1}\left[\fint_{B_{\rho}}[G(|D\psi_1|)+G(|\psi_1|)]d\xi \right]\frac{d\rho}{\rho^{1+\alpha}}.
\end{eqnarray*}
 This completes the proof of Theorem \ref{Th1}.
\end{proof}
\begin{proof}[Proof of Theorem \ref{Th2}]
At first we give the proof of the estimate \eqref{du}.

We choose $S=0$ and make use  of \eqref{a0guji}, we conclude
\begin{eqnarray*}
\nonumber k_{m+1}&\leq& c\fint_{B_R(x_0)}|Du-(Du)_{B_R(x_0)}|+|Du|\operatorname{d}x \\
&+& c\sum_{i=0}^{m-1}\left\lbrace \left[\frac{|\mu|(\overline{B_{R_i}}(x_0))}{R_i^{n-1}} \right] ^{\frac{1}{i_g}} +\left[\frac{D\Psi_1(B_{R_i}(x_0))}{R_i^{n-1}} \right] ^{\frac{1}{i_g}}+\left[\frac{D\Psi_2(B_{R_i}(x_0))}{R_i^{n-1}} \right] ^{\frac{1}{i_g}}\right\rbrace  \\
&+&c\sum_{i=0}^{m-1}[\omega(B_i)]^{\frac{1}{1+s_g}}\left\lbrace \fint_{B_{R_i}(x_0)}|Du|dx+ G^{-1}\left[\fint_{B_{R_i}(x_0)}[G(|D\psi_1|)+G(|\psi_1|)]dx \right]\right\rbrace.
\end{eqnarray*}
On the other hand, we observe
\begin{equation*}
\sum_{i=0}^{+\infty}\left[\frac{|\mu|(\overline{B_{R_i}}(x_0))}{R_i^{n-1}} \right] ^{\frac{1}{i_g}} \leqslant cW_{\frac{1}{i_g+1},i_g+1}^{\mu}(x_0,2R),
\end{equation*}
\begin{equation*}
\sum_{i=0}^{+\infty}\left[\frac{D\Psi_1(B_{R_i}(x_0))}{R_i^{n-1}} \right] ^{\frac{1}{i_g}} \leqslant cW_{\frac{1}{i_g+1},i_g+1}^{[\psi_1]}(x_0,2R),
\end{equation*}
\begin{equation*}
\sum_{i=0}^{+\infty}\left[\frac{D\Psi_2(B_{R_i}(x_0))}{R_i^{n-1}} \right] ^{\frac{1}{i_g}} \leqslant cW_{\frac{1}{i_g+1},i_g+1}^{[\psi_2]}(x_0,2R),
\end{equation*}
\begin{equation*}
\sum_{i=0}^{+\infty}\left[\omega(R_i)\right] ^{\frac{1}{1+s_g}}\leqslant c\int_{0}^{2R}[\omega(\rho)]^{\frac{1}{1+s_g}}\frac{d\rho}{\rho}\leqslant c,
\end{equation*}
\begin{eqnarray*}
\sum_{i=0}^{+\infty}\left[\omega(R_i)\right] ^{\frac{1}{1+s_g}}G^{-1}\left[\fint_{B_{R_i}(x_0)}[G(|D\psi_1|)+G(|\psi_1|)]dx \right] \\
\leqslant c\int_{0}^{2R}[\omega(\rho)]^{\frac{1}{1+s_g}}G^{-1}\left[\fint_{B_{\rho}(x_0)} [G(|D\psi_1|)+G(|\psi_1|)]dx\right]\frac{d\rho}{\rho}.
\end{eqnarray*}
Combining \eqref{mwanguji} with above estimates, it follows that
\begin{eqnarray*}
&&|Du(x_0)|=\lim_{m\rightarrow\infty}k_{m+1} \\
&\leqslant& c\left[ \fint_{B_R(x_0)}|Du|dx+W_{\frac{1}{i_g+1},i_g+1}^{\mu}(x_0,2R)+W_{ \frac{1}{i_g+1},i_g+1}^{[\psi_1]}(x_0,2R)+W_{ \frac{1}{i_g+1},i_g+1}^{[\psi_2]}(x_0,2R)\right] \\
&+&c\int_{0}^{2R}[\omega(\rho)]^{\frac{1}{1+s_g}}G^{-1}\left[\fint_{B_{\rho}(x_0)}[G(|D\psi_1|)+G(|\psi_1|)]dx \right]\frac{d\rho}{\rho}.
\end{eqnarray*}
Next we prove the estimate \eqref{du-du}.
For every $ x,y\in B_\frac{R}{4}(x_0)$,  we define
\begin{equation*}
r_i:=\frac{r}{H^i}, \ \ \ \ r\leqslant \frac{R}{2}, \ \ \ \ k_i=|(Du)_{B_{r_i}(x)}-S|, \ \ \ \ \overline{k_i}=|(Du)_{B_{r_i}(y)}-S|.
\end{equation*}
Taking use  of   \eqref{a0guji} again, we have
\begin{eqnarray*}
k_{m+1}&\leqslant & c\fint_{B_r(x)}(\vert Du-(Du)_{B_r(x)}\vert+\vert Du-S\vert)\operatorname{d}\!\xi \\
&+&c r^{\alpha}\sum_{i=0}^{m-1}\left\lbrace \left[ \frac{|\mu|(\overline{B_{r_i}}(x))}{r_i^{n-1+\alpha i_g}}\right] ^\frac{1}{i_g}+\left[ \frac{D\Psi_1(B_{r_i}(x))}{r_i^{n-1+\alpha i_g}}\right] ^\frac{1}{i_g}+\left[ \frac{D\Psi_2(B_{r_i}(x))}{r_i^{n-1+\alpha i_g}}\right] ^\frac{1}{i_g}\right\rbrace \\
&+&cr^{\alpha}\sum_{i=0}^{m-1}\frac{1}{r_{i}^{\alpha}}[\omega(r_i)]^{\frac{1}{1+s_g}}\left\lbrace \fint_{B_{r_i}(x)}|Du|d\xi+ G^{-1}\left[\fint_{B_{r_i}(x)}[G(|D\psi_1|)+G(|\psi_1|)]d\xi \right] \right\rbrace.
\end{eqnarray*}
Moreover,
\begin{equation*}
\sum_{i=0}^{+\infty}\left[\frac{|\mu|(\overline{B_{r_i}(x)})}{r_i^{n-1+\alpha i_g}} \right] ^{\frac{1}{i_g}}
\leqslant cW_{\frac{1-\alpha i_{g}}{1+i_g},i_g+1}^{\mu}(x,2r),
\end{equation*}
\begin{equation*}
\sum_{i=0}^{+\infty}\left[\frac{D\Psi_1(B_{r_i}(x))}{r_i^{n-1+\alpha i_g}} \right] ^{\frac{1}{i_g}} \leqslant cW_{\frac{1-\alpha i_g}{1+i_g},i_g+1}^{[\psi_1]}(x,2r),
\end{equation*}
\begin{equation*}
\sum_{i=0}^{+\infty}\left[\frac{D\Psi_2(B_{r_i}(x))}{r_i^{n-1+\alpha i_g}} \right] ^{\frac{1}{i_g}} \leqslant cW_{\frac{1-\alpha i_g}{1+i_g},i_g+1}^{[\psi_2]}(x,2r),
\end{equation*}
\begin{equation*}
\sum_{i=0}^{+\infty}\frac{1}{r_{i}^{\alpha}}\left[\omega(r_i)\right] ^{\frac{1}{1+s_g}}\leqslant c\int_{0}^{2r}[\omega(\rho)]^{\frac{1}{1+s_g}}\frac{d\rho}{\rho^{1+\alpha}}\leqslant c,
\end{equation*}
\begin{eqnarray*}
\sum_{i=0}^{+\infty}\frac{1}{r_{i}^{\alpha}}\left[\omega(R_i)\right] ^{\frac{1}{1+s_g}}G^{-1}\left[\fint_{B_{r_i}(x_0)}[G(|D\psi_1|)+G(|\psi_1|)]d\xi \right] \\
\leqslant c\int_{0}^{2r}[\omega(\rho)]^{\frac{1}{1+s_g}}G^{-1}\left[\fint_{B_{\rho}(x)}[G(|D\psi_1|)+G(|\psi_1|)]d\xi \right]\frac{d\rho}{\rho^{1+\alpha}}.
\end{eqnarray*}
Combining \eqref{mwanguji} with the previous estimates,  we obtain
\begin{eqnarray*}
k_{m+1} &\leqslant& c \fint_{B_r(x)}(\vert Du-(Du)_{B_r(x)}\vert+\vert Du-S\vert)\operatorname{d}\!\xi \\
&+&c r^{\alpha}\left[  W^{\mu}_{-\alpha+\frac{1+\alpha}{1+i_g},i_g+1}(x,R)+ W^{[\psi_1]}_{-\alpha+\frac{1+\alpha}{1+i_g},i_g+1}(x,R)+ W^{[\psi_2]}_{-\alpha+\frac{1+\alpha}{1+i_g},i_g+1}(x,R)\right] \\
&+&c r^{\alpha} \left\lbrace \fint_{B_r(x)}|Du|d\xi+\int_{0}^{R}[\omega(\rho)]^{\frac{1}{1+s_g}}G^{-1}\left[\fint_{B_{\rho}(x)}[G(|D\psi_1|)+G(|\psi_1|)]d\xi \right]\frac{d\rho}{\rho^{1+\alpha}} \right\rbrace.
\end{eqnarray*}
If $x$ is a Lebesgue's point of $Du$, then let $m\rightarrow\infty$, we derive
\begin{eqnarray*}
|Du(x)-S|&=&\lim_{m\rightarrow\infty}k_{m+1}  \\
&\leqslant &c \fint_{B_r(x)}(\vert Du-(Du)_{B_r(x)}\vert+\vert Du-S\vert)\operatorname{d}\!\xi \\
&+&c r^{\alpha}\left[  W^{\mu}_{-\alpha+\frac{1+\alpha}{1+i_g},i_g+1}(x,R)+ W^{[\psi_1]}_{-\alpha+\frac{1+\alpha}{1+i_g},i_g+1}(x,R)+ W^{[\psi_2]}_{-\alpha+\frac{1+\alpha}{1+i_g},i_g+1}(x,R)\right] \\
&+&c r^{\alpha} \left\lbrace \fint_{B_r(x)}|Du|d\xi+\int_{0}^{R}[\omega(\rho)]^{\frac{1}{1+s_g}}G^{-1}\left[\fint_{B_{\rho}(x)}[G(|D\psi_1|)+G(|\psi_1|)]d\xi \right]\frac{d\rho}{\rho^{1+\alpha}} \right\rbrace.
\end{eqnarray*}
If $y$ is a Lebesgue's point of $Du$, a parallel outcome ensues. Subsequently, coupling with the two previous estimates enables us to deduce
\begin{eqnarray*}
&& |Du(x)-Du(y)|\\
 &\leqslant &c \fint_{B_r(x)}(\vert Du-(Du)_{B_r(x)}\vert+\vert Du-S\vert)\operatorname{d}\!\xi \\
&+&c r^{\alpha}\left[  W^{\mu}_{-\alpha+\frac{1+\alpha}{1+i_g},i_g+1}(x,R)+ W^{[\psi_1]}_{-\alpha+\frac{1+\alpha}{1+i_g},i_g+1}(x,R)+ W^{[\psi_2]}_{-\alpha+\frac{1+\alpha}{1+i_g},i_g+1}(x,R)\right] \\
&+&c r^{\alpha} \left\lbrace \fint_{B_r(x)}|Du|d\xi+\int_{0}^{R}[\omega(\rho)]^{\frac{1}{1+s_g}}G^{-1}\left[\fint_{B_{\rho}(x)}[G(|D\psi_1|)+G(|\psi_1|)]d\xi \right]\frac{d\rho}{\rho^{1+\alpha}} \right\rbrace \\
&+&c \fint_{B_r(y)}(\vert Du-(Du)_{B_r(y)}\vert+\vert Du-S\vert)\operatorname{d}\!\xi \\
&+&c r^{\alpha}\left[  W^{\mu}_{-\alpha+\frac{1+\alpha}{1+i_g},i_g+1}(y,R)+ W^{[\psi_1]}_{-\alpha+\frac{1+\alpha}{1+i_g},i_g+1}(y,R)+ W^{[\psi_2]}_{-\alpha+\frac{1+\alpha}{1+i_g},i_g+1}(y,R)\right] \\
&+&c r^{\alpha} \left\lbrace \fint_{B_r(y)}|Du|d\xi+\int_{0}^{R}[\omega(\rho)]^{\frac{1}{1+s_g}}G^{-1}\left[\fint_{B_{\rho}(y)}[G(|D\psi_1|)+G(|\psi_1|)]d\xi \right]\frac{d\rho}{\rho^{1+\alpha}} \right\rbrace.
\end{eqnarray*}
We now choose
\begin{equation*}
S:=(Du)_{B_{3r}(x)}, \ \ \ r:=\dfrac{|x-y|}{2},
\end{equation*}
it's easy to see that  $ B_{r}(y)\subseteq B_{3r}(x) $ and therefore
\begin{eqnarray*}
&&\fint_{B_r(x)}(\vert Du-(Du)_{B_r(x)}\vert+\vert Du-S\vert)\operatorname{d}\!\xi+
\fint_{B_{r}(y)}(\vert Du-(Du)_{B_{r}(y)}\vert+\vert Du-S\vert)\operatorname{d}\!\xi  \\
&\leqslant &c(n)\fint_{B_{3r}(x)}\vert Du-(Du)_{B_{3r}(x)}\vert\operatorname{d}\!\xi.
\end{eqnarray*}
Now notice that $ x,y\in B_\frac{R}{4}(x_0)$, so $ |x-y|\leqslant \frac{R}{2} $ and then $B_{3r}(x)\subseteq B_{\frac{3R}{4}}(x)\subseteq B_R(x_0)$.
Therefore apply \eqref{1.122} to obtain
\begin{eqnarray*}
&&\fint_{B_{3r}(x)}\vert Du-(Du)_{B_{3r}(x)}\vert\operatorname{d}\!\xi \\
&\leqslant&  c r^{\alpha}M^\#_{\alpha,\frac{3R}{4}}(Du)(x)  \\
&\leqslant&c \left( \frac{r}{R}\right) ^{\alpha}\fint_{B_{\frac{3R}{4}}(x)}\vert Du\vert\operatorname{d}\!\xi \\
&+&c r^{\alpha}\left\lbrace \left[ M_{1-\alpha i_g,\frac{3R}{4}}(\mu)(x)\right] ^{\frac{1}{i_g}}+ \left[ \overline{M}_{1-\alpha i_g,\frac{3R}{4}}(\psi_1)(x)\right] ^{\frac{1}{i_g}}+ \left[ \overline{M}_{1-\alpha i_g,\frac{3R}{4}}(\psi_2)(x)\right] ^{\frac{1}{i_g}}\right\rbrace \\
&+&cr^{\alpha} \left\lbrace W_{\frac{1}{i_g+1},i_g+1}^{\mu}(x,2R)+W_{ \frac{1}{i_g+1},i_g+1}^{[\psi_1]}(x,2R)+W_{ \frac{1}{i_g+1},i_g+1}^{[\psi_2]}(x,2R)\right\rbrace  \\
&+&cr^{\alpha} \int_{0}^{2R}[\omega(\rho)]^{\frac{1}{1+s_g}}G^{-1}\left[\fint_{B_{\rho}(x)}[G(|D\psi_1|)+G(|\psi_1|)]d\xi \right]\frac{d\rho}{\rho^{1+\alpha}}.
\end{eqnarray*}
Moreover, because of  \eqref{mwanguji}, we have
\begin{eqnarray*}
\nonumber &&\fint_{B_r(x)}|Du|d\xi \\
&\leqslant& c\left[ \fint_{B_{\frac{3R}{4}}(x)}|Du|d\xi+W_{\frac{1}{i_g+1},i_g+1}^{\mu}(x,\frac{3R}{2})+W_{ \frac{1}{i_g+1},i_g+1}^{[\psi_1]}(x,\frac{3R}{2})+W_{ \frac{1}{i_g+1},i_g+1}^{[\psi_2]}(x,\frac{3R}{2})\right]  \\ \nonumber
&+&c\int_{0}^{\frac{3R}{2}}[\omega(\rho)]^{\frac{1}{1+s_g}}G^{-1}\left[\fint_{B_{\rho}(x)}[G(|D\psi_1|)+G(|\psi_1|)]d\xi \right]\frac{d\rho}{\rho},
\end{eqnarray*}
\begin{eqnarray*}
\nonumber &&\fint_{B_r(y)}|Du|d\xi\\
 &\leqslant& c\left[ \fint_{B_{\frac{3R}{4}}(y)}|Du|d\xi+W_{\frac{1}{i_g+1},i_g+1}^{\mu}(y,\frac{3R}{2})+W_{ \frac{1}{i_g+1},i_g+1}^{[\psi_1]}(y,\frac{3R}{2})+W_{ \frac{1}{i_g+1},i_g+1}^{[\psi_2]}(y,\frac{3R}{2})\right]  \\ \nonumber
&+&c\int_{0}^{\frac{3R}{2}}[\omega(\rho)]^{\frac{1}{1+s_g}}G^{-1}\left[\fint_{B_{\rho}(y)}[G(|D\psi_1|)+G(|\psi_1|)]d\xi \right]\frac{d\rho}{\rho}.
\end{eqnarray*}
Due to the preceding estimates, we derive
\begin{eqnarray*}
&&|Du(x)-Du(y)|\leqslant c\left( \frac{r}{R}\right) ^{\alpha}\fint_{B_R(x_0)}\vert Du\vert\operatorname{d}\!\xi
\\
&+&c r^{\alpha}\left\lbrace \left[ M_{1-\alpha i_g,\frac{3R}{4}}(\mu)(x)\right] ^{\frac{1}{i_g}}+ \left[ \overline{M}_{1-\alpha i_g,\frac{3R}{4}}(\psi_1)(x)\right] ^{\frac{1}{i_g}}+ \left[ \overline{M}_{1-\alpha i_g,\frac{3R}{4}}(\psi_2)(x)\right] ^{\frac{1}{i_g}}\right\rbrace \\
&+&c r^{\alpha}\left[  W^{\mu}_{-\alpha+\frac{1+\alpha}{1+i_g},i_g+1}(x,2R)+ W^{[\psi_1]}_{-\alpha+\frac{1+\alpha}{1+i_g},i_g+1}(x,2R)+ W^{[\psi_2]}_{-\alpha+\frac{1+\alpha}{1+i_g},i_g+1}(x,2R)\right] \\
&+&c r^{\alpha}\left[  W^{\mu}_{-\alpha+\frac{1+\alpha}{1+i_g},i_g+1}(y,2R)+ W^{[\psi_1]}_{-\alpha+\frac{1+\alpha}{1+i_g},i_g+1}(y,2R)+ W^{[\psi_2]}_{-\alpha+\frac{1+\alpha}{1+i_g},i_g+1}(y,2R)\right] \\
&+&c r^{\alpha}\left[ \int_{0}^{2R}[\omega(\rho)]^{\frac{1}{1+s_g}}G^{-1}\left[\fint_{B_{\rho}(x)}[G(|D\psi_1|)+G(|\psi_1|)]d\xi  \right]\frac{d\rho}{\rho^{1+\alpha}} \right] \\
&+&c r^{\alpha}\left[  \int_{0}^{2R}[\omega(\rho)]^{\frac{1}{1+s_g}}G^{-1}\left[\fint_{B_{\rho}(y)}[G(|D\psi_1|)+G(|\psi_1|)]d\xi  \right]\frac{d\rho}{\rho^{1+\alpha}}\right],
\end{eqnarray*}
where we used the fact that $W_{\frac{1}{i_g+1},i_g+1}^{\mu}(x,2R) \leqslant c W^{\mu}_{-\alpha+\frac{1+\alpha}{1+i_g},i_g+1}(x,2R)$ and for other case there are similar inequalities.

For every $ \varepsilon>0 $, we know that there exists $ 0<r\leqslant R $ such that
\begin{equation*}
M_{1-\alpha i_g,\frac{3R}{4}}(\mu)(x)\leqslant |B_1|^{-1}\frac{|\mu|(B_{\frac{3r}{4}}(x))}{\left(\frac{3r}{4} \right) ^{n-1+\alpha i_g}}+\varepsilon.
\end{equation*}
According to   the definition of Wolff potential,    we obtain
\begin{eqnarray*}
\frac{|\mu|(B_{\frac{3r}{4}}(x))}{\left(\frac{3r}{4} \right) ^{n-1+\alpha i_g}}&=&
\left[\left(\frac{|\mu|(B_{\frac{3r}{4}}(x))}{\left(\frac{3r}{4} \right) ^{n-1+\alpha i_g}} \right) ^{\frac{1}{i_g}} \frac{1}{-\log(3/4)}\int_{3r/4}^r\frac{\operatorname{d}\!\rho}{\rho}\right] ^{i_g}  \\
&\leqslant& C\left[ \int_{3r/4}^r\left(\frac{|\mu|(B_{\rho}(x))}{{\rho }^{n-1+\alpha i_g}} \right) ^{\frac{1}{i_g}} \frac{\operatorname{d}\!\rho}{\rho}\right] ^{i_g}  \\
&\leqslant& C\left[W^{\mu}_{-\alpha+\frac{1+\alpha}{1+i_g},i_g+1}(x,R) \right] ^{i_g}.
\end{eqnarray*}
Likewise,
\begin{equation*}
\frac{D\Psi_1(B_{\frac{3r}{4}}(x))}{\left(\frac{3r}{4} \right) ^{n-1+\alpha i_g}} \leqslant C\left[W^{[\psi_1]}_{-\alpha+\frac{1+\alpha}{1+i_g},i_g+1}(x,R) \right] ^{i_g},
\end{equation*}
\begin{equation*}
\frac{D\Psi_2(B_{\frac{3r}{4}}(x))}{\left(\frac{3r}{4} \right) ^{n-1+\alpha i_g}} \leqslant C\left[W^{[\psi_2]}_{-\alpha+\frac{1+\alpha}{1+i_g},i_g+1}(x,R) \right] ^{i_g}.
\end{equation*}
Finally, we consider the definition of  $r$ to derive
\begin{eqnarray*}
&&\vert Du(x)-Du(y) \vert \\
&\leq &c\fint_{B_R(x_0)}\vert Du\vert\operatorname{d}\!\xi\left(\frac{|x-y|}{R} \right) ^{\alpha}  \\
&+& c \left[  W^{\mu}_{-\alpha+\frac{1+\alpha}{1+i_g},i_g+1}(x,2R)+ W^{[\psi_1]}_{-\alpha+\frac{1+\alpha}{1+i_g},i_g+1}(x,2R)+ W^{[\psi_2]}_{-\alpha+\frac{1+\alpha}{1+i_g},i_g+1}(x,2R)\right]|x-y|^{\alpha} \\
&+&c \left[  W^{\mu}_{-\alpha+\frac{1+\alpha}{1+i_g},i_g+1}(y,2R)+ W^{[\psi_1]}_{-\alpha+\frac{1+\alpha}{1+i_g},i_g+1}(y,2R)+ W^{[\psi_2]}_{-\alpha+\frac{1+\alpha}{1+i_g},i_g+1}(y,2R)\right]|x-y|^{\alpha} \\
&+&c \left[ \int_{0}^{2R}[\omega(\rho)]^{\frac{1}{1+s_g}}G^{-1}\left[\fint_{B_{\rho}(x)}[G(|D\psi_1|)+G(|\psi_1|)]d\xi  \right]\frac{d\rho}{\rho^{1+\alpha}} \right]|x-y|^{\alpha} \\
&+&c \left[  \int_{0}^{2R}[\omega(\rho)]^{\frac{1}{1+s_g}}G^{-1}\left[\fint_{B_{\rho}(y)}[G(|D\psi_1|)+G(|\psi_1|)]d\xi  \right]\frac{d\rho}{\rho^{1+\alpha}}\right]|x-y|^{\alpha}.
\end{eqnarray*}
Then we finish the proof of Theorem \ref{Th2}.
\end{proof}

\section*{Acknowledgments}The authors are supported by 
the Fundamental Research Funds for the Central Universities 
  (Grant No.  2682024CX028) and the  National Natural Science Foundation of China (Grant No.~12071229 and 12101452).

\end{document}